\theoremstyle{definition}
\newtheorem{definition}{Definition}[section]
\newtheorem{theorem}{Theorem}
\newtheorem{lemma}{Lemma}
\newtheorem{corollary}{Corollary}
\theoremstyle{plain}
\newtheorem{proposition}[definition]{Proposition}
\newcommand{\N}{{\mathbb N}}
\newcommand{\R}{{\mathbb R}}
\newcommand{\calO}{{\mathcal O}}
\def\E{{\mathbb E}}
\def\R{{\mathbb R}}
\def\C{{\mathbb C}}
\def\N{{\mathbb N}}
\def\ZZ{{\mathbb Z}}
\def\ph{\varphi}
\def\av{{\arrowvert}}
\def\Av{{\Arrowvert}}
\def\bphi{\bm{\varphi}}
\def\bgamma{\bm{\gamma}}
\def\beq{\begin{equation}}
\def\eeq{\end{equation}}
\def\beqa{\begin{eqnarray}}
\def\eeqa{\end{eqnarray}}
\def\beqann{\begin{eqnarray*}}
\def\eeqann{\end{eqnarray*}}
\newcommand{\calI}{{\mathcal I}}
\newcommand{\Var}{\operatorname{var}}
\newcommand{\Cov}{\operatorname{cov}}
\newcommand{\gpncor}{\operatorname{corr}}
\newcommand{\cum}{\operatorname{cum}}
\newcommand{\mmspe}{\operatorname{MSPE}}
\newcommand{\ess}{\operatorname{ess}}
\newcommand{\TV}{\operatorname{TV}}
\newcommand{\bv}{\operatorname{bv}}
\newcommand{\reals}{{\mathbb R}}
\newcommand{\nats}{{\mathbb N}}
\newcommand{\ints}{{\mathbb Z}}
\newcommand{\cov}[1]{\, {\rm cov}\left( #1 \right) }
\newcommand{\cor}[1]{\, {\rm corr}\left( #1 \right) }
\newcommand{\bm}[1]{\mbox{\protect\boldmath $ #1 $}}
\newcommand{\p}[2]{\, q\left( #1, #2 \right) }
\newcommand{\pT}[2]{\, q_T\left( #1, #2 \right) }
\newcommand{\ptilde}[2]{\, \tilde{q} \left( #1, #2 \right) }
\newcommand{\ptildeW}[2]{\, \tilde{q}_W \left( #1, #2 \right) }
\newcommand{\mspeA}[2]{\, \mbox{MSPE}\left( \hat{#1}_{#2},#1_{#2}\right) }
\newcommand{\mspeeA}[2]{\, \E\left( \hat{#1}_{#2} - #1_{#2}\right)^2 }
\newcommand{\mspeebA}[2]{\, \mbox{MSPE}\left( \hat{#1}_{#2}^{(b)},#1_{#2}\right) }
\newcommand{\mspeefA}[2]{\, \mbox{MSPE}\left( \hat{#1}_{#2}^{(f)},#1_{#2}\right) }
\newcommand{\rhs}{\, \bm{r} }
\newcommand{\rhsT}{\, \bm{r}_{[zT];T} }
\newcommand{\bb}{\bm{b}_{[zT]}^{(b)}}
\newcommand{\bbtilde}{\, \tilde{\bm{b}}_{[zT]}^{(b)} }
\renewcommand{\bf}{\bm{b}_{[zT]+\tau}^{(f)}}
\newcommand{\bftilde}{\, \tilde{\bm{b}}_{[zT]+\tau}^{(f)} }
\newcommand{\Bb}{{\rm B}_{[zT]}^{(b)} }
\newcommand{\Bbtilde}{\, \tilde{{\rm B}}_{[zT]}^{(b)} }
\newcommand{\Bf}{{\rm B}_{[zT]+\tau}^{(f)}}
\newcommand{\Bftilde}{\, \tilde{{\rm B}}_{[zT]+\tau}^{(f)}}
\newcommand{\BT}{{\rm B}_{[zT]}}
\newcommand{\sigb}{\Sigma_{[zT];T}^{(b)}}
\newcommand{\sigf}{\Sigma_{[zT]+\tau;T}^{(f)}}
\newcommand{\sig}{\Sigma_{[zT];T}}
\newcommand{\phis}{\ph_{[zT],\tau,\tau}}
\newcommand{\phisT}{\ph_{[zT],\tau,\tau;T}}
\newcommand{\phistilde}{\tilde{\ph}_{[zT],\tau,\tau}}
\newcommand{\PhisT}{\bm{\ph}_{[zT],\tau;T}}
\newcommand{\PPhis}{\bm{\ph}_{[zT],\tau}}
\newcommand{\Phistilde}{\tilde{\bm{\ph}}}
\newcommand{\otone}{o_T(1)}
\newcommand{\OTinv}{\calO(T^{-1})}
\newcommand{\bcb}{\begin{changebar}}
\newcommand{\ecb}{\end{changebar}}
\title{The Local Partial Autocorrelation Function and Some Applications}
\author[1]{Rebecca Killick\thanks{r.killick@lancs.ac.uk}}
\author[2]{Marina I.\ Knight\thanks{marina.knight@york.ac.uk}}
\author[3]{Guy P.\ Nason\thanks{g.nason@imperial.ac.uk}}
\author[1]{Idris A.\ Eckley\thanks{i.eckley@lancs.ac.uk}}
\affil[1]{Dept.\ of Mathematics and Statistics, Lancaster University, Lancaster, LA1 4YF, UK}
\affil[2]{Dept.\ of Mathematics, University of York, York, YO10 5DD, UK}
\affil[3]{Dept.\ of Mathematics, Imperial College, London, SW7 2AZ, UK}
\begin{document}
\bibliographystyle{guy3}
\maketitle

\begin{abstract}
The classical regular and partial autocorrelation functions are powerful tools for stationary time series modelling and analysis. However, it is increasingly recognized that many time series are not stationary and the use of  classical
global autocorrelations can give misleading answers. This article introduces two estimators of the local partial autocorrelation function and establishes their asymptotic properties.
The article then illustrates the use of these new estimators on both simulated and
real time series. The examples clearly demonstrate the strong practical
benefits of local estimators for time series that exhibit nonstationarities.

{\em Keywords: locally stationary time series, integrated local wavelet periodogram, wavelets, practical estimation, Haar cross-correlation wavelet}
\end{abstract}


\section{Introduction}\label{sec:intro}

Much work has been undertaken to develop both theory and methods for the use of the
autocorrelation and partial autocorrelation for mean zero second-order stationary
time series. See, for example, \cite{priestley82:spectral},
\cite{brockwell91:time} or \cite{chatfield03:the}. For stationary time series, both autocorrelations
are fundamental for eliciting second-order structure and are particularly useful for subsequent modelling and prediction. Unfortunately,
in many applied situations,
for example neurophysiology \citep{fiecas16:modeling} or biology \citep{hargreaves17:clustering}, the stationarity assumption is not tenable and,
hence, use of the classical stationary-based autocorrelations is highly questionable.
\begin{changebar} Indeed, it is not possible for a 
time-varying parameter to be adequately summarised
by a single coefficient.
\end{changebar}
Before practical analysis, one should therefore attempt to assess whether the series is stationary or not.
Many techniques and software packages exist that enable such assessment, see reviews
in~\cite{dahlhaus12} or \cite{CardinaliNason18} or newer techniques that
measure, rather than test, \begin{changebar} the \end{changebar} degree of nonstationarity, e.g.\ \cite{DN16}.

A large literature on nonstationary time series modelling has developed since the
1950s. See, for example,
\cite{page:instantaneous}, \cite{silverman:locally}, \cite{Whittle63},
\cite{priestley:evolutionary}, \cite{tong74:on} and \cite{dahlhaus97:fitting}.
Alternative model forms including the piecewise stationary time series of \cite{adak98:time}; the wavelet models of \cite{nason00:wavelet};
and the SLEX models of \cite{ombao02:the} have been proposed. A comprehensive
review
of locally stationary series can be found in \cite{dahlhaus12}.
As part of these developments,
the local autocovariance, for non- or locally stationary processes, has been studied in the literature and details on specific estimators can be found in \cite{HW97}, \cite{nason13}, \cite{cardinali14} and \cite{zhao15}, for example.
However, to date, little attention seems to have been paid to  \emph{local} partial autocorrelation and the benefits it could bring. An exception is
\cite{degerine96:evolutive} and \cite{degerine03:characterization},  who
extended the classical partial autocorrelation to encompass nonstationary processes.
Their seminal work mentions estimation, including the windowing idea that we use
in Section~\ref{sec:3},
but provides no theory for their estimator nor evaluation via simulation
or on real time series.
More recently,
\cite{yang16:bayesian}
use a hierarchical Bayesian modelling approach to estimate process time-frequency structure,
linking the time-dependent partial autocorrelations to the coefficients of a time-varying autoregressive process.

\begin{changebar} 
Autocorrelation and partial autocorrelation are intimately related,
presenting complementary views on the underlying
structure within a time series. For example, arguably,  partial autocorrelation provides direct information on the order
and underlying structure of autoregressive-type processes (see Appendix~\ref{sec:pacf} for additional background on its interpretation). 
As in the stationary case, for real-life statistical analysis one needs both local autocorrelation {\em and} partial autocorrelation.
This article fills the gap for the latter.
We introduce two new estimators of the local partial autocorrelation function,
supplying new results on their theoretical properties. We further exhibit our estimators on a simulated
series and three real time series that demonstrate the importance of using a local approach.
In addition, our work also provides a freeware R software package, {\tt lpacf}, for local partial autocorrelation that complements existing
software for
local autocorrelations, such as {\tt lacf} in the {\tt locits} package.

\end{changebar}


\section{The Local Partial Autocorrelation Function}\label{sec:2}

\subsection{The (process) local partial autocorrelation function, $q_T$, for a locally stationary process}\label{sec:lpacf:process}

Let $\{ X_{t,T} \}$ be a zero-mean locally stationary process such as the  locally stationary Fourier process, \citet[Definition~2.1]{dahlhaus97:fitting}, or the
locally stationary wavelet process, \citet[Definition~1]{nason00:wavelet} (for ease of reference, these definitions can also be found in Appendix \ref{sec:defls}).
Locally stationary process theory \bcb supports short-memory processes \ecb and often has quantities of interest such as
the time-varying spectrum, $f (z, \omega)$ at (rescaled) time $z \in (0,1)$ and frequency $\omega$, or
local autocovariance $c(z, \tau)$ at location $z$ and lag $\tau$, which are estimated via a process quantity ($f_T$ or $c_T$), which depends on the
sample size $T$ and asymptotically approaches to the quantity of interest as $T \rightarrow \infty$.
\begin{changebar}
Consider, for example, $f_T(z, \omega)$ from \cite{NeumannvonSachs97} 
or $c_T(z, \tau)$ from \cite{nason00:wavelet}.\end{changebar}
We follow this paradigm by first introducing the process local partial autocorrelation, $q_T$.

The (process) partial autocorrelation function, $q_T (z, \tau)$, of a zero-mean {\em locally} stationary process can be understood informally as
\begin{displaymath}
\pT{z}{\tau}= \cor{X_{[zT],T},X_{[zT]+\tau,T}|\text{``in-between'' data}},
\end{displaymath}
where $[x]$ denotes the integer part of the real number $x$. A formal definition follows.
\begin{definition}
The local process partial autocorrelation of a zero-mean locally stationary process
$\{ X_{t, T}\}_{t=0}^{T-1}$,
 at rescaled time $z\in(0,1)$ and lag $\tau$, is defined by
\begin{equation}
\label{eq:deflpacf}
  \pT{z}{\tau}= \gpncor \left\{ X_{{[zT]+\tau},T}-P_{[zT],\tau}(X_{{[zT]+\tau},T}),X_{[zT],T}-P_{[zT],\tau}(X_{{[zT]},T}) \right\},
\end{equation}
where $P_{[zT],\tau}(\cdotp)$ is the projection operator onto
$\overline{\mbox{sp}}(X_{[zT]+1,T}, \ldots, X_{[zT]+\tau-1,T})$.
Here $\overline{\mbox{sp}}$ is the closed span defined by
\cite{brockwell91:time}.
\end{definition}
\noindent The next proposition shows an alternative useful representation of $q_T$.
\begin{proposition}\label{prop:lpacf}
Let $\{X_{t, T}\}$ be a zero-mean locally stationary process. Then the process local
partial autocorrelation, $q_T$, can be expressed as
\beq\label{eq:deflpacf2}
\pT{z}{\tau}= \phisT \left[ \frac
{\operatorname{Var} \{ X_{[zT],T}-P_{[zT],\tau}(X_{{[zT]},T}) \} }
{\operatorname{Var} \{ X_{{[zT]+\tau},T}-P_{[zT],\tau}(X_{{[zT]+\tau},T}) \}} \right]^{1/2},
\eeq
where $\phisT$ is from projecting $X_{[zT]+\tau,T}$ onto
$\overline{\mbox{sp}} ( X_{[zT],T},\ldots,X_{[zT]+\tau-1,T})$, the projection being
\beq\label{eq:calclpacf}
\hat{X}_{[zT]+\tau,T}=\ph_{[zT],\tau,1;T}X_{[zT]+\tau-1,T}+\ldots+\ph_{[zT],\tau,\tau;T}X_{[zT],T}.
\eeq
\end{proposition}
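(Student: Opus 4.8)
The plan is to recognise \eqref{eq:deflpacf2} as the Durbin--Levinson-type identity relating a partial autocorrelation to the terminal coefficient of a finite-order projection, carried out verbatim in the $L^2$ space of the process restricted to the index set $\{[zT],[zT]+1,\dots,[zT]+\tau\}$; local stationarity plays no role beyond guaranteeing that the relevant second moments exist, so the argument is purely geometric. First I would abbreviate the two residuals of \eqref{eq:deflpacf} by $U=X_{[zT]+\tau,T}-P_{[zT],\tau}(X_{[zT]+\tau,T})$ and $V=X_{[zT],T}-P_{[zT],\tau}(X_{[zT],T})$, so that $\pT{z}{\tau}=\operatorname{Cov}(U,V)/\{\operatorname{Var}(U)\operatorname{Var}(V)\}^{1/2}$. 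Comparing this with the target \eqref{eq:deflpacf2}, it suffices to establish the single identity $\varphi_{[zT],\tau,\tau;T}=\operatorname{Cov}(U,V)/\operatorname{Var}(V)$; substituting it into the correlation and cancelling one factor of $\operatorname{Var}(V)^{1/2}$ then yields \eqref{eq:deflpacf2} at once.

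The heart of the argument is the standard \emph{update} (adding-a-regressor) formula. Let $\mathcal H=\overline{\mathrm{sp}}(X_{[zT]+1,T},\dots,X_{[zT]+\tau-1,T})$ be the in-between span onto which $P_{[zT],\tau}$ projects, and let $\mathcal H^{+}=\overline{\mathrm{sp}}(X_{[zT],T},\dots,X_{[zT]+\tau-1,T})$ be its augmentation by $X_{[zT],T}$, whose associated projection is the predictor $\hat X_{[zT]+\tau,T}$ of \eqref{eq:calclpacf}. Since $V\perp\mathcal H$ and $\mathcal H^{+}=\mathcal H\oplus\mathrm{sp}(V)$, for any $Y$ the augmented projection splits as $P_{\mathcal H^{+}}(Y)=P_{[zT],\tau}(Y)+\{\operatorname{Cov}(Y,V)/\operatorname{Var}(V)\}V$, and I would apply this with $Y=X_{[zT]+\tau,T}$.

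To extract $\varphi_{[zT],\tau,\tau;T}$ I would read off the coefficient of $X_{[zT],T}$ in \eqref{eq:calclpacf} using this decomposition. The term $P_{[zT],\tau}(X_{[zT]+\tau,T})$ lies in $\mathcal H$ and so involves only $X_{[zT]+1,T},\dots,X_{[zT]+\tau-1,T}$, contributing nothing to $X_{[zT],T}$, while $V=X_{[zT],T}-P_{[zT],\tau}(X_{[zT],T})$ carries $X_{[zT],T}$ with unit coefficient; hence the coefficient of $X_{[zT],T}$ in $P_{\mathcal H^{+}}(X_{[zT]+\tau,T})$, namely $\varphi_{[zT],\tau,\tau;T}$, equals $\operatorname{Cov}(X_{[zT]+\tau,T},V)/\operatorname{Var}(V)$. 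Finally, because $P_{[zT],\tau}(X_{[zT]+\tau,T})\in\mathcal H$ is orthogonal to $V$, one has $\operatorname{Cov}(X_{[zT]+\tau,T},V)=\operatorname{Cov}(U,V)$, which is exactly the identity required above and delivers \eqref{eq:deflpacf2}.

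The one genuinely delicate step is this isolation of the coefficient on $X_{[zT],T}$: it is legitimate only if the projection coefficients in \eqref{eq:calclpacf} are uniquely determined, i.e.\ if the within-window covariance matrix of $(X_{[zT],T},\dots,X_{[zT]+\tau-1,T})$ is nonsingular (equivalently $V\neq 0$), which I would either assume outright or secure from a positive lower bound on the local spectrum. Conceptually the only subtlety is that a correlation is symmetric and normalised whereas $\varphi_{[zT],\tau,\tau;T}$ is an asymmetric regression coefficient, and the variance ratio in \eqref{eq:deflpacf2} is precisely the factor reconciling them. As a sanity check, in the stationary case the two residual variances $\operatorname{Var}(U)$ and $\operatorname{Var}(V)$ coincide, the ratio collapses to unity, and one recovers the familiar statement that the lag-$\tau$ partial autocorrelation equals the terminal Yule--Walker coefficient.
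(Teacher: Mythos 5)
Your proof is correct and follows essentially the same route as the paper's: both decompose the augmented span $\overline{\mbox{sp}}(X_{[zT],T},\ldots,X_{[zT]+\tau-1,T})$ as the ``in-between'' span plus the one-dimensional space spanned by the back-cast residual $V$, identify $\ph_{[zT],\tau,\tau;T}$ with $\operatorname{cov}(X_{[zT]+\tau,T},V)/\operatorname{var}(V)$, and then use orthogonality of the forecast to replace $X_{[zT]+\tau,T}$ by the forecast residual $U$ before substituting into the correlation. The only cosmetic difference is that you extract the coefficient of $X_{[zT],T}$ directly (flagging the nonsingularity needed for uniqueness), whereas the paper reaches the same identification by taking covariances of \eqref{eq:calclpacf} with $V$.
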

\begin{proof}
  See Section~\ref{app:prop:lpacf}.
\end{proof}

Formulae~\eqref{eq:deflpacf} and~\eqref{eq:deflpacf2} are natural generalisations of their
stationary equivalents, compare for example with Definitions~3.4.1 and~3.4.2 from
\citet{brockwell91:time}.

\subsection{Equivalent expressions for the process local partial autocorrelation
function, $q_T$} \label{sec:lpacf:equivalent}

As a step to estimation, we will express $q_T$  by exploiting a well-known connection between partial autocorrelation and linear prediction.
We introduce the following notation $P_{[zT],\tau}(X_{{[zT]},T})=\hat{X}^{(b)}_{[zT],T}$ and
$P_{[zT],\tau}(X_{{[zT]+\tau},T})=\hat{X}^{(f)}_{[zT]+\tau,T}$. These are simply the respective linear predictors of $X_{{[zT]},T}$ ({\em b}ack-casted), and $X_{{[zT]+\tau},T}$ ({\em f}orecasted), using the predictor set  $X_{[zT]+1,T}, \ldots, X_{[zT]+\tau-1,T}$.
The numerator and denominator in~\eqref{eq:deflpacf2} can  be re-expressed as a Mean Squared Prediction Error (MSPE). Consequently, we can rewrite $\pT{z}{\tau}$ as
\beq\label{eq:deflpacf3}
\pT{z}{\tau}= \phisT \left\{ \frac
{\mbox{MSPE}(\hat{X}^{(b)}_{[zT],T},X_{{[zT]},T})}
{\mbox{MSPE}(\hat{X}^{(f)}_{[zT]+\tau,T},X_{{[zT]+\tau},T})} \right\}^{1/2}. 
\eeq
For details see Section~\ref{app:prop:lpacf2}.
For stationary processes the square root term in~\eqref{eq:deflpacf3} equals one and
$\pT{z}{\tau}$ coincides with the classical $q(\tau)$.

In general, given $t$ observations of a zero-mean locally stationary process, $X_{0,T},\ldots,X_{t-1,T}$, the mean squared prediction error of a linear predictor of $X_{t,T}$,
$\hat{X}_{t,T}=\sum_{s=0}^{t-1}b_{t-1-s,T}X_{s,T}$, can be written as
\beq\nonumber
\mspeA{X}{{t,T}}=\bm{b}_t^T \Sigma_{t;T} \bm{b}_{t},
\eeq
where $\bm{b}_t = ( b_{t-1,T},  \ldots, b_{0,T}, -1)^T$ and $\Sigma_{t,T}$ is the covariance of $X_{0,T}, \ldots, X_{t,T}$, see, e.g., \citet[Section 3.3]{fryz03:forecasting}.
In our case, the back-casted and forecasted values of $X_{{[zT]},T}$ and $X_{{[zT]+\tau},T}$
are also linear predictors using the window of observations $X_{[zT]+1,T}, \ldots, X_{[zT]+\tau-1,T}$, and can be expressed as
\beq\nonumber
\hat{X}_{[zT],T}^{(b)} =\sum_{p=1}^{\tau-1}
b_{{p},T}^{(b)}X_{{[zT]+p},T} \mbox{\ \ and\ \ } \hat{X}_{[zT]+\tau,T}^{(f)}=\sum_{p=1}^{\tau-1} b_{{\tau-1-p},T}^{(f)}X_{{[zT]+p},T},
\eeq
respectively.
Here, the $\bm{b}^{(b)}$, $\bm{b}^{(f)}$ coefficient vectors are obtained through minimisation of
the corresponding mean squared prediction error
using the same principle as in the stationary case.

We next give a proposition that paves the way towards a natural definition of the local partial autocorrelation function $q$ in Section~\ref{sec:lpacf:lpaf}.
\begin{proposition}\label{prop:lpacf2}
Let $\{X_{t,T}\}$ be a zero-mean locally stationary process. Then $q_T$ can also be expressed as
\begin{equation}
\label{eq:qTexpress}
  \pT{z}{\tau} = \phisT \left\{ \frac{(\bb)^T\sigb\bb}{(\bf)^T\sigf\bf} \right\}^{1/2},
 \end{equation}
where $\phisT$ is as in~\eqref{eq:calclpacf}, and $\bb= ( -1,  \tilde{b}^{(b)}_{1,T}, \ldots,  \tilde{b}^{(b)}_{{\tau-1},T})^T$ and
$\bf= ( {b}^{(f)}_{\tau-2,T}, \ldots, {b}^{(f)}_{0,T},  -1 )^T$ are $\tau \times 1$ coefficient vectors. To simplify notation we have suppressed the dependency of the $\bm{b}$-vector components on
$[zT]$ and also dependency of $\bb$, $\sigb$, $\bf$, $\sigf$ on $\tau$, even though it is still present. The $\tau \times \tau$ covariance matrices $\sigb$ and $\sigf$ are given in Appendix~\ref{sec:covmat}.
\end{proposition}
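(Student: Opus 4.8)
The plan is to reduce the proposition to the MSPE representation~\eqref{eq:deflpacf3} already established for $q_T$, and then to rewrite each of the two prediction-error variances appearing there as an explicit quadratic form. The only tool needed is the general identity $\mspeA{X}{{t,T}}=\bm{b}_t^T \Sigma_{t;T} \bm{b}_{t}$ recorded above: the mean squared error of a linear predictor is the variance of its prediction error, and this variance is the quadratic form in the augmented coefficient vector (the predictor weights together with a single entry in the slot of the target variable) against the covariance matrix of the variables involved. Because a quadratic form $\bm{v}^T\Sigma\bm{v}$ is unchanged under $\bm{v}\mapsto-\bm{v}$, I am free to orient each augmented vector so that the target slot carries $-1$.

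First I would handle the backward predictor. Writing the backward error as $\hat{X}_{[zT],T}^{(b)}-X_{[zT],T}=\sum_{p=1}^{\tau-1}b_{p,T}^{(b)}X_{[zT]+p,T}-X_{[zT],T}$ and ordering the $\tau$ variables as $X_{[zT],T},X_{[zT]+1,T},\ldots,X_{[zT]+\tau-1,T}$, the coefficients read off directly as $(-1,b_{1,T}^{(b)},\ldots,b_{\tau-1,T}^{(b)})^T$, which is exactly $\bb$. With $\sigb$ the covariance matrix of these variables in this order (as constructed in Appendix~\ref{sec:covmat}), the general identity yields $\mbox{MSPE}(\hat{X}^{(b)}_{[zT],T},X_{{[zT]},T})=(\bb)^T\sigb\bb$.

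Second I would handle the forward predictor in the same fashion. From $\hat{X}_{[zT]+\tau,T}^{(f)}-X_{[zT]+\tau,T}=\sum_{p=1}^{\tau-1}b_{\tau-1-p,T}^{(f)}X_{[zT]+p,T}-X_{[zT]+\tau,T}$, ordering the variables as $X_{[zT]+1,T},\ldots,X_{[zT]+\tau-1,T},X_{[zT]+\tau,T}$ and applying the reindexing $p\mapsto\tau-1-p$, the coefficients appear in decreasing-subscript order as $(b_{\tau-2,T}^{(f)},\ldots,b_{0,T}^{(f)},-1)^T=\bf$. Pairing this with the corresponding covariance matrix $\sigf$ gives $\mbox{MSPE}(\hat{X}^{(f)}_{[zT]+\tau,T},X_{{[zT]+\tau},T})=(\bf)^T\sigf\bf$. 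Substituting both quadratic forms into~\eqref{eq:deflpacf3} produces~\eqref{eq:qTexpress}, completing the argument.

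The only genuine obstacle is the index and sign bookkeeping rather than any analytic difficulty: one must commit to a fixed ordering of the $\tau$ variables in each case, invoke the sign-invariance of the quadratic form to justify placing $-1$ in the target slot, and check that the substitution $p\mapsto\tau-1-p$ in the forward sum really reverses the subscript pattern to match $\bf$. Verifying that $\sigb$ and $\sigf$ from Appendix~\ref{sec:covmat} are precisely the covariance matrices of these two ordered collections is what ties the quadratic forms to the stated vectors.
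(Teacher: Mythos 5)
Your proposal is correct and follows essentially the same route as the paper: pass to the MSPE form~\eqref{eq:deflpacf3}, invoke the quadratic-form identity $\mspeA{X}{{t,T}}=\bm{b}_t^T \Sigma_{t;T} \bm{b}_{t}$ for linear predictors, and match the coefficient orderings to $\bb$, $\bf$ and the matrices of Appendix~\ref{sec:covmat}. The only step the paper spells out that you fold into your ``general identity'' is the observation that the prediction errors have zero mean (so variance equals MSPE), which holds because the process is zero-mean and the predictors are projections onto spans of zero-mean variables.
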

\begin{proof}
  See Section~\ref{app:prop:lpacf2}.
\end{proof}

We will use  expression~\eqref{eq:qTexpress}
as the basis of an estimator in Section \ref{sec:lpacf:est}. The last element of the vector $\PhisT$, denoted $\phisT$, can be obtained as the solution to the (local) Yule-Walker equations $\sig \PhisT=\rhsT$, where $\sig$ is the $\tau \times \tau$ covariance matrix
given in Appendix~\ref{sec:covmat} and $\rhsT$ is the $\tau \times 1$ covariance vector of
$X_{[zT]+\tau,T}$ with $\left(
X_{{[zT]+\tau-1},T},\ldots,X_{{[zT]},T} \right)$. This is equivalent to obtaining a solution
$\hat{X}_{[zT]+\tau,T}$ that achieves minimum
mean squared prediction error over the class of linear predictors.
For stationary processes the covariance matrix $\Gamma_\tau$ is Toeplitz. However, for locally stationary
processes the covariance matrix $\sig$ only has an approximate
Toeplitz structure. Once again, for ease of notation, we have suppressed the dependency on the lag
$\tau$ from the
vector $\rhsT$ and
covariance matrix $\sig$, the latter given in Appendix~\ref{sec:covmat}.

\subsection{The wavelet local partial autocorrelation function, $q$}
\label{sec:lpacf:lpaf}
The local (process) partial autocorrelation introduced
in Section~\ref{sec:lpacf:process} can be applied to any zero-mean locally
stationary process. However, for the theory we develop below, we need to establish the underlying asymptotic quantity, which is intimately related to the data generating model. Hence, from now on, we  assume that the
process $\{X_{t,T}\}$ is a zero-mean locally stationary wavelet process and
define the local partial autocorrelation
function, $q$,  which  we show later to be the asymptotic limit of $q_T$ from~\eqref{eq:deflpacf2}.
\begin{definition}
\label{def:lpacf:lpacf}
	Let $\{X_{t,T}\}$ be a zero-mean locally stationary wavelet process as defined in
\cite{fryz03:forecasting} with local autocovariance $c(z, \tau)$ and spectrum $\{ S_{j}(z)\}_j$ that satisfy
\begin{align}
\sum_{\tau = 0}^{\infty} \sup_z | c(z, \tau)| < \infty, \nonumber  
C_1 := \ess \inf_{z,\omega} \sum_{j > 0} S_j(z) | \hat{\psi}_j (\omega) |^2 > 0, \nonumber 
\end{align}
where $\hat{\psi}_j (\omega) = \sum_s \psi_{j, 0} (s) \exp(i \omega s)$.  Then, the local partial
autocorrelation function is
	\begin{equation}\label{deflpacf}
		\p{z}{\tau} =\phis \left\{ \frac{(\bb)^T\Bb\bb}{(\bf)^T\Bf\bf} \right\}^{1/2},
	\end{equation}
where
\begin{enumerate}
\item the quantity $\phis$ is the last element in the vector $\PPhis$ (of length $\tau$) obtained as
the solution to the local Yule-Walker equations i.e.\ $\BT \PPhis=\rhs_{[zT]}$,

\item the matrices $\Bf$ and $\Bb$ are  the local approximations of $\sigf$ and
$\sigb$, as  in the proof of Lemma A.1 from \cite{fryz03:forecasting}. The
$\rhs_{[zT]}$ are also local approximations to $\rhsT$ from
Section~\ref{sec:lpacf:equivalent}
but using $c(z, \tau)$.

\item the coefficient vectors $\bf$ and $\bb$ are obtained as the solution to the
forecasting and back-casting prediction equations, or equivalently through minimisation
of the $\mmspe$. See Section~3.1 and Proposition~3.1 from \cite{fryz03:forecasting} for details.
\end{enumerate}
\end{definition}
Next, Proposition~\ref{prop:pTp} shows that the (process) local partial autocorrelation,
$q_T$, converges to the local partial autocorrelation, $q$, defined by \eqref{deflpacf}.
\begin{proposition}\label{prop:pTp}
Let $\{ X_{t,T} \}$ be a zero-mean locally stationary wavelet process as defined by Definition \ref{def:lpacf:lpacf}, with
spectrum $\{ S_j(z) \}_{j=1}^{\infty}$ constructed with nondecimated discrete wavelet system $\{
\psi_{j, k}(t)\}$.
Let local partial autocorrelations $q_T$ and $q$ be defined as in~\eqref{eq:deflpacf3}
and~\eqref{deflpacf} respectively.
Then, as $T\to\infty$, uniformly in $\tau\in\ZZ$ and $z\in(0,1)$, we have
$|\pT{z}{\tau}-\p{z}{\tau}|=\OTinv$.
\end{proposition}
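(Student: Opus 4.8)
The plan is to show that the $\OTinv$ convergence rate propagates through the closed-form expressions~\eqref{eq:deflpacf3} and~\eqref{deflpacf} for $q_T$ and $q$. Both $q_T$ and $q$ are built from the same algebraic recipe applied to two different covariance inputs: $q_T$ uses the process covariances (encoded in $\sig$, $\sigb$, $\sigf$, $\rhsT$), while $q$ uses their local wavelet approximations ($\BT$, $\Bb$, $\Bf$, $\rhs_{[zT]}$). The heart of the matter is therefore the gap between the process covariance $\cT{z}{\tau}$ and its asymptotic wavelet limit $\c{z}{\tau}$.

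\emph{Step 1: the covariance approximation.} First I would invoke the fundamental approximation result underlying the locally stationary wavelet framework—namely that the process covariance agrees with the local (wavelet) autocovariance up to an $\OTinv$ error, $\sup_{z,\tau}|\cT{z}{\tau}-\c{z}{\tau}|=\OTinv$, with the uniformity in both $z$ and $\tau$ coming from the summability condition $\sum_\tau \sup_z|\c{z}{\tau}|<\infty$ imposed in Definition~\ref{def:lpacf:lpacf}. This is exactly the type of estimate established in \cite{fryz03:forecasting} (the cited Lemma~A.1 and the construction of $\Bb$, $\Bf$ as local approximations). From this entrywise bound, every matrix appearing in~\eqref{eq:qTexpress} satisfies $\|\sig-\BT\|=\OTinv$, and likewise for the back-/forecasting matrices and the right-hand-side vectors, in any fixed matrix norm, since each is a finite $\tau\times\tau$ array of such covariance entries.

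\emph{Step 2: propagate through the Yule--Walker solve and the MSPE ratio.} Next I would transfer these matrix/vector errors to the solutions of the Yule--Walker systems $\sig\PhisT=\rhsT$ and $\BT\PPhis=\rhs_{[zT]}$. Using the uniform positive-definiteness guaranteed by the condition $C_1:=\ess\inf_{z,\omega}\sum_{j>0}S_j(z)|\hat\psi_j(\omega)|^2>0$—which bounds the smallest eigenvalue of the covariance matrices away from zero uniformly in $z$ and $T$—a standard perturbation argument for linear systems gives $\|\PhisT-\PPhis\|=\OTinv$, hence $|\phisT-\phis|=\OTinv$ for the last coordinates. The same positive-definiteness and boundedness let me conclude that the backcast/forecast coefficient vectors $\bb,\bf$ and their process analogues differ by $\OTinv$, and that the quadratic forms (the MSPEs in numerator and denominator) are bounded and bounded below; consequently the ratio, and its square root, also differ by $\OTinv$. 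Combining the two factors of~\eqref{eq:deflpacf3}/\eqref{deflpacf} via the product rule for $\OTinv$ perturbations of bounded quantities yields $|\pT{z}{\tau}-\p{z}{\tau}|=\OTinv$, uniformly.

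\emph{Main obstacle.} The delicate point is the \emph{uniformity} in $\tau$. As $\tau$ grows the matrices $\sig,\BT$ grow in dimension, so a naïve bound on $\|\sig-\BT\|$ could deteriorate, and the perturbation constant in the linear-system argument depends on a uniform lower bound for the smallest eigenvalue. The role of the two hypotheses in Definition~\ref{def:lpacf:lpacf} is precisely to control this: the $\ell^1$-summability of $\c{\cdot}{\tau}$ controls the off-diagonal mass (keeping operator norms and approximation errors uniformly bounded in the dimension $\tau$), while $C_1>0$ furnishes the uniform spectral lower bound needed to invert stably for every $\tau$. I would therefore spend most of the effort verifying that the perturbation estimates are genuinely dimension-free, carefully tracking that all implied constants depend only on $C_1$ and the summability constant, and not on $\tau$, $z$, or $T$.
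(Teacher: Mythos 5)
Your proposal is correct in substance and follows the same core strategy as the paper's proof: both rest on the entrywise approximation $|c_T(z,\tau)-c(z,\tau)|=\OTinv$ and a perturbation argument for the local Yule--Walker system, using boundedness of the spectral norm $\|\sig^{-1}\|$ (the paper invokes Lemma~A.3 of \cite{fryz03:forecasting}, which is exactly where your $C_1>0$ condition does its work) together with Rayleigh-quotient bounds to conclude $|\phisT-\phis|=\OTinv$. The one genuine divergence is in the MSPE quotient: the paper does not redo any perturbation there, but simply cites Proposition~3.1 of \cite{fryz03:forecasting}, which gives $(\bb)^T\sigb\bb=\left\{(\bb)^T\Bb\bb\right\}\left\{1+\otone\right\}$ and its forecast analogue, and then combines these $\left\{1+\otone\right\}$ factors with the $\OTinv$ bound on the $\phi$ term. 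Your route --- perturbing the backcast/forecast coefficient vectors and the quadratic forms directly from the covariance approximation --- is more self-contained and actually delivers an $\OTinv$ rate for the quotient, which matches the stated conclusion more tightly than the paper's own final display (whose first term is, strictly speaking, only $\otone$). Regarding your flagged ``main obstacle'': be aware that the paper does not achieve dimension-free uniformity in $\tau$ either; its bounds $\|\PPhis\|_1\leq|\tau|^{1/2}\|\PPhis\|_2$ and $\|\PhisT-\PPhis\|_2\leq(\mu|\tau|)^{1/2}/T$ carry explicit $|\tau|^{1/2}$ factors, and the proof twice states ``as $\tau$ is fixed.'' So your plan to track constants depending only on $C_1$ and the summability constant is more ambitious than what the published argument does; for a proof at the level of rigour of the paper, treating $\tau$ as fixed (so that the dimension of the linear systems is fixed) suffices, and you could simply adopt that stance rather than attempt genuinely dimension-free estimates.
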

\begin{proof}
See Section~\ref{app:prop:pTp}.
\end{proof}

This result parallels the local autocovariance result of \cite{nason00:wavelet}, where it is shown that $| c_T(z, \tau) - c(z, \tau) | = \OTinv$ as $T \rightarrow \infty$
uniformly in $\tau \in \ZZ$ and $z \in (0,1)$.

\subsection{Wavelet local partial autocorrelation estimation}\label{sec:lpacf:est}
We now  consider the important problem of  local partial autocorrelation estimation.  We begin by first noting that
all the quantities on the right-hand side of \eqref{deflpacf} for $q(z, \tau)$ are based on
the local autocovariance $c(z, \tau)$. A natural estimator of $q$ can thus be
obtained by replacing all  occurrences of  $c(z, \tau)$ by the  wavelet-based estimator
$\hat{c}(z, \tau)$ from \citet[Section 3.3]{nason13} as follows.
\begin{definition}
\label{def:wbe}
The wavelet-based local partial autocorrelation  estimator is defined as
\beq\label{eq:estq}
\ptilde{z}{\tau}=\phistilde  \left\{ \frac{(\bbtilde)^T\Bbtilde\bbtilde}{(\bftilde)^T\Bftilde\bftilde} \right\}^{1/2},
\eeq
\begin{changebar}
where  the  matrix estimates, $\Bbtilde$, $\Bftilde$, and vector estimates $\bbtilde$, $\bftilde$,
\end{changebar}
are obtained from their population quantities in Sections~\ref{sec:lpacf:equivalent} and~\ref{sec:lpacf:lpaf} by
plugging in the wavelet-based local autocovariance estimator $\hat{c}$
from \cite{nason13}.  Similarly, the vector $\Phistilde_{[zT], \tau}$ is
obtained as the solution to the local Yule-Walker equations in
Definition~\ref{def:lpacf:lpacf} again replacing $c$ by $\hat{c}$.
\end{definition}
We next
establish the consistency of $\tilde{q}$ for $q$.
\begin{proposition}\label{prop:ptildep}
Let $\{ X_{t,T} \}$ be a zero-mean locally stationary wavelet process under the assumptions given in
Definition~\ref{def:lpacf:lpacf}.
The local partial autocorrelation estimator $\ptilde{z}{\tau}$ from~\eqref{eq:estq} is consistent
for the true local partial autocorrelation $\p{z}{\tau}$, in that $\ptilde{z}{\tau}-\p{z}{\tau}=o_{p}(1)$ as $T \rightarrow \infty$.
\end{proposition}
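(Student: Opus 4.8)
The plan is to establish consistency by a plug-in/continuity argument. The estimator $\ptilde{z}{\tau}$ in~\eqref{eq:estq} is built from exactly the same population quantities as $\p{z}{\tau}$ in~\eqref{deflpacf}, with every occurrence of the local autocovariance $c(z,\tau)$ replaced by the wavelet-based estimator $\hat{c}(z,\tau)$ of \cite{nason13}. The key input I would invoke is the known consistency of that local autocovariance estimator, namely $\hat{c}(z,\tau) - c(z,\tau) = o_p(1)$ as $T\to\infty$ (uniformly over the relevant finite range of lags $\tau$), which is established in \cite{nason13}. Given this, the whole argument reduces to showing that $q$ is a \emph{continuous} function of the underlying autocovariances, so that small perturbations in the inputs produce small perturbations in the output. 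The continuous mapping theorem then delivers $\ptilde{z}{\tau}-\p{z}{\tau}=o_p(1)$.

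First, I would observe that each building block of $\p{z}{\tau}$ is a continuous (indeed smooth) function of the entries of the autocovariance. The matrices $\BT$, $\Bb$, $\Bf$ and the vector $\rhs_{[zT]}$ are assembled linearly from values $c(z,\tau)$, so the corresponding estimated objects $\Bbtilde$, $\Bftilde$, $\rhstilde$ are the same continuous functions evaluated at $\hat{c}$; hence they converge in probability to their population counterparts. Next, the coefficient vectors $\bb$, $\bf$ and the Yule--Walker vector $\PPhis$ are obtained by solving linear systems with these matrices on the left-hand side --- e.g.\ $\BT\PPhis=\rhs_{[zT]}$. Provided the relevant covariance matrices are invertible with eigenvalues bounded away from zero, matrix inversion is continuous, so $\Phistilde$, $\bbtilde$, $\bftilde$ converge in probability to $\PPhis$, $\bb$, $\bf$. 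In particular the last component $\phistilde \stackrel{p}{\to}\phis$.

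Assembling these pieces, the numerator $(\bbtilde)^T\Bbtilde\bbtilde$ and denominator $(\bftilde)^T\Bftilde\bftilde$ are continuous functions of quantities that all converge in probability, hence each converges in probability to $(\bb)^T\Bb\bb$ and $(\bf)^T\Bf\bf$ respectively. The ratio, its square root, and the final multiplication by $\phistilde$ are all continuous operations wherever the denominator is positive, so by the continuous mapping theorem (and Slutsky's theorem for the product) the estimator converges in probability to $\p{z}{\tau}$, which is the claim.

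The main obstacle is ensuring the argument does not break down at the inversion and division steps, i.e.\ verifying that the denominators stay bounded away from zero and the covariance matrices $\BT$, $\Bf$, $\Bb$ remain uniformly nonsingular. This is precisely where the assumption $C_1 := \ess\inf_{z,\omega}\sum_{j>0}S_j(z)|\hat\psi_j(\omega)|^2 > 0$ from Definition~\ref{def:lpacf:lpacf} enters: it guarantees a strictly positive lower bound on the spectral density, which in turn bounds the eigenvalues of the local covariance matrices away from zero and forces the forecasting/back-casting mean squared prediction errors (the denominator and numerator quadratic forms) to be strictly positive. With this lower bound in hand, the continuity of inversion and division is uniform on a neighbourhood of the true quantities, and the $o_p(1)$ convergence of $\hat c$ propagates cleanly through every step. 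I would flag that, if a uniform (in $z$ and $\tau$) statement is desired rather than pointwise consistency, one must additionally ensure the convergence of $\hat c$ and the lower eigenvalue bound hold uniformly, but for the stated pointwise $o_p(1)$ conclusion the continuous mapping argument above suffices.
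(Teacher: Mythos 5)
Your proposal is correct and follows essentially the same route as the paper's proof: plug in the consistent local autocovariance estimator $\hat{c}$, note that the Yule--Walker solutions and the backward/forward variance quadratic forms are continuous functions of these inputs, and conclude via the continuous mapping theorem. In fact you are somewhat more careful than the paper, which simply ``assumes the variance is non-zero'' where you explicitly tie the nonsingularity and positivity of the denominators to the assumption $C_1>0$ from Definition~\ref{def:lpacf:lpacf}.
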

\begin{proof}
  See Section~\ref{app:prop:ptildep}.
\end{proof}
Our wavelet-based estimator, $\ptilde{z}{\tau}$, develops earlier
work on forecasting by \cite{Fryzlewicz03} in a new direction.
However, the estimator is not simple to implement and,
as we will see later, does not perform  as well as the following
alternative approach, which applies a window to the classical partial autocorrelation.

\section{Windowed Estimation of Local Partial Autocorrelation}\label{sec:3}

\subsection{The integrated local wavelet periodogram}

We introduce an alternative estimator, $\ptildeW{z}{\tau}$,
 that is simpler to implement than $\ptilde{z}{\tau}$, and turns out to perform better. This new estimator is constructed by windowing the classical partial autocorrelation (designed for stationary processes)
 over an interval centred  at time $[zT]$ with  length $L(T)$, where $L(T)
\rightarrow \infty$ and $L(T)/T \rightarrow 0$, as $T\rightarrow\infty$. Proposition~\ref{coro:pwindow}, in Section \ref{sec:windowlpacf}, establishes the asymptotic behaviour of $\ptildeW{z}{\tau}$ by approximating
the integrated local wavelet periodogram of a (zero-mean) locally stationary wavelet process by its equivalent stationary version at a fixed rescaled time (see Theorem~\ref{thm:JNphi}).
  The proof of the theorem introduces new bounds for quantities involving cross-correlation wavelets, as well as a new exact formula for cross-correlation Haar wavelets.
Key definitions and results are presented below, while full proofs are provided in Section~\ref{app:sec3proofs}.
\begin{definition}
\label{def:ILWP}
 Let $\{X_{t,T}\}$ be a locally stationary wavelet process as in
 Definition~1 from \cite{nason00:wavelet}
 with evolutionary wavelet spectrum $\{ S_j (z) \}_{j=1}^\infty$ for $z\in (0,1)$,
 Lipschitz constants $\{ L_j \}_{j=1}^\infty$, process constants $\{ C_j \}_{j=1}^\infty$
 and underlying discrete nondecimated wavelets $\{ \psi_{j, k} \}$.
 The integrated local periodogram on
the interval $\left[ [zT]-{L(T)}/2+1,[zT]+{L(T)}/2 \right]$ is given by
\begin{equation}\nonumber
J_{L(T)}(z,\phi) = \sum_{j=1}^{\infty} \phi_j I^{\ast}_{L(T)}(z, j).
\end{equation}
Here $\{ \phi_j \}_{j=1}^{\infty} \in \Phi$ and $\Phi$ is a set of complex-valued bounded sequences equipped with uniform norm $|| \phi ||_{\infty} := \sup_j | \phi_j |$, $z \in (0, 1)$ and, for $j \in \mathbb{N}$,
$I^{\ast}_{L(T)} (z, j)$ is the uncorrected, tapered local wavelet periodogram given by
\beq\label{eq:uncortapper}
I^{\ast}_{L(T)} (z, j) = H_{L(T)}^{-1} \left| \sum_{t=0}^{{L(T)}-1} h \left\{
t/{L(T)} \right\} X_{[zT]+ t - {L(T)}/2  +1, T}\, \psi_{j, [zT]}(t) \right|^2,~\mbox{for $j\in \mathbb{N}$},
\eeq
with  
$h: [0,1] \rightarrow {\mathbb R}_{+}$
and normalizing factor
$H_{L(T)} := \sum_{j=0}^{{L(T)}-1} h^2 \{ j/{L(T)}\} \sim {L(T)} \int_{0}^1 h^2(x) \,
dx$.
\end{definition}
We next approximate the integrated local (wavelet) periodogram, $J_{L(T)} (z,\phi)$, by the corresponding statistics of a stationary process $\{Y_t\}$ with the same local corresponding statistics at $t=zT$, for fixed $z$.  Conceptually, this is a common approach useful in establishing asymptotic properties for functions of locally stationary processes \citep{dahlhaus98:on}, which in this work we advance to include {\em wavelet}-based expansions. Specifically, define

\begin{equation}\nonumber
J^Y_{L(T)} (\phi) = \sum_{j=1}^{\infty} \phi_j I^{\ast,Y}_{{L(T)}}(j),
\end{equation}
where
\begin{equation}\nonumber
I^{\ast,Y}_{{L(T)}} (j) := H_{L(T)}^{-1} \left| \sum_{s=0}^{{L(T)}-1} h \left\{ s/L(T)
\right\} Y_{ [zT] - {L(T)}/2  +1+s, T}\, \psi_{j, [zT]}(s) \right|^2
\end{equation}
is the wavelet periodogram on the segment $[zT]-{L(T)}/2+1, \ldots, [zT]+{L(T)}/2$ of the {\em
stationary} process
\begin{equation}\label{eq:styY}
Y_s = \sum_{j=1}^{\infty} W_j(z)  \sum_{k=-\infty}^\infty  \psi_{j, k}(s) \xi_{j, k}.
\end{equation}
Here $\psi_{j, k}$ is the same wavelet sequence as previously, $\{ \xi_{j, k} \}$
a set of independent identically distributed random variables with mean zero and unit variance and
$W_j(z)$ is such that $W^2_j(z) = S_j(z)$ for all $z\in(0,1)$ and $j \in \N$. The next theorem is the key result establishing the asymptotic properties of the
integrated local wavelet periodogram.

\begin{theorem}\label{thm:JNphi}
Let $\{ X_{t,T} \}$ be a zero-mean Gaussian locally stationary
wavelet process as defined by Definition~\ref{def:ILWP}.
Suppose $\sum_j C_j^2 2^{2j} < \infty$,
$\{ W_j \}_j$ is Lipschitz continuous with Lipschitz constants
$L_j$ such that $\sum_{j} L_j^2 2^{2j}<\infty$ and $\sum_{j} W_j^2(z) 2^{2j}<\infty$ at any rescaled time $z$, $L(T)/T\rightarrow 0$ as $T\rightarrow \infty$, and $\phi \in \Phi$ is a sequence of bounded variation.
Further, assume $h$ is a rectangular kernel. Then, using the family of discrete  Haar wavelets, we have

\begin{align}
\E\left\{ J_{L(T)}(z,\phi) \right\} = \E\left\{ J^Y_{L(T)} (\phi)\right\} &+ \calO \left\{ {L(T)}^{-1}
\right\},
\label{eq:approx1}\\
J_{L(T)}(z,\phi)-\E\left\{ J_{L(T)}(z,\phi)\right\} &=o_{p}\left({L(T)}^{-1/2}\right),
\label{consist1}\\
J^Y_{L(T)}(\phi)-\E\left\{J^Y_{L(T)}(\phi)\right\} &=o_{p}\left({L(T)}^{-1/2}\right).
\label{consist2}
\end{align}

\end{theorem}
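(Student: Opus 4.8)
The plan is to exploit the Gaussianity of $\{X_{t,T}\}$ to reduce all three assertions to statements about the first two moments of quadratic forms, and to couple $J_{L(T)}(z,\phi)$ with $J^Y_{L(T)}(\phi)$ through the frozen covariance $c(z,\tau)=\sum_{l}S_l(z)\Psi_l(\tau)$ at the fixed rescaled time $z$, where $\Psi_l(\tau)=\sum_s\psi_{l,0}(s)\psi_{l,0}(s+\tau)$ is the autocorrelation wavelet. The stationary process $Y$ in~\eqref{eq:styY} is built precisely so that $\Cov(Y_s,Y_{s'})=c(z,s'-s)$ exactly, whereas for $X$ the windowed covariance is genuinely time varying. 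The technical engine throughout is the control of cross-correlation wavelet inner products $\sum_\tau \Psi_j(\tau)\Psi_l(\tau)$ and their finite-window analogues; because the processes are Gaussian, these quantities, together with the spectra $S_l(z)$ and the multiplier $\phi$, are the only ingredients that survive in the limit.

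First I would prove the mean approximation~\eqref{eq:approx1}. Writing both $\E\{I^{\ast}_{L(T)}(z,j)\}$ and $\E\{I^{\ast,Y}_{L(T)}(j)\}$ as windowed double sums and inserting the wavelet expansions of the respective covariances, each expectation collapses, after summing the inner products, to $\sum_l S_l(z)\,A^{(L(T))}_{j,l}+(\text{error})$, where $A^{(L)}_{j,l}$ is the finite-window inner product of the cross-correlation wavelets at scales $j$ and $l$. Since both statistics use the same rectangular kernel, the same window length and the same Haar system, centred symmetrically at $[zT]$, the leading $A^{(L)}_{j,l}$ terms are identical and cancel in the difference. Two error sources remain: the replacement of the time-varying coefficients $w_{l,k;T}^2$ by their frozen value $W_l^2(z)=S_l(z)$, controlled by the Lipschitz constants $L_l$ and the window half-width, and the edge/truncation discrepancy between $A^{(L)}_{j,l}$ and its infinite-window limit $\sum_\tau \Psi_j(\tau)\Psi_l(\tau)$. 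Multiplying by $\phi_j$, summing over $j$, and bounding the resulting double series over scales using $\|\phi\|_{\infty}<\infty$ together with the hypotheses $\sum_l S_l(z)2^{2l}<\infty$ and $\sum_l L_l^2 2^{2l}<\infty$, I would collect both contributions into the stated $\calO\{L(T)^{-1}\}$ rate; here the exact Haar cross-correlation formula is what lets the edge terms be evaluated rather than merely bounded, pinning down the $L(T)^{-1}$ order.

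Next, for the concentration statements~\eqref{consist1} and~\eqref{consist2}, I would compute variances directly. Because $\{X_{t,T}\}$ (and $Y$) are Gaussian, each $I^{\ast}_{L(T)}(z,j)$ is a squared linear functional, so by Isserlis' theorem $\Cov\{I^{\ast}_{L(T)}(z,j),I^{\ast}_{L(T)}(z,j')\}$ equals twice the square of a windowed covariance sum, and hence
\[
\Var\{J_{L(T)}(z,\phi)\}=\sum_{j,j'}\phi_j\phi_{j'}\,\Cov\{I^{\ast}_{L(T)}(z,j),I^{\ast}_{L(T)}(z,j')\}
\]
is a scale-indexed double series of squared cross-scale covariances. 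The averaging inherent in the windowed wavelet periodogram produces the decay, and the Gaussian second-moment computation gives $\Var\{J_{L(T)}(z,\phi)\}=o\{L(T)^{-1}\}$, after which Chebyshev's inequality yields $J_{L(T)}(z,\phi)-\E\{J_{L(T)}(z,\phi)\}=o_{p}\{L(T)^{-1/2}\}$. The stationary case~\eqref{consist2} is handled identically and is in fact easier, as the exact frozen covariance removes the time-varying bookkeeping. The one substantive task is to bound the scale double series uniformly: this is where the new cross-correlation wavelet bounds enter, guaranteeing that $\sum_{j,j'}|\phi_j\phi_{j'}|$ times the squared inner products converges, controlled by $\sum_l C_l^2 2^{2l}<\infty$ and $\sum_l W_l^2(z)2^{2l}<\infty$.

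The main obstacle is the cross-correlation wavelet analysis itself and the attendant interchange of the infinite sum over scales with expectation. The autocorrelation- and cross-correlation-wavelet inner products and their finite-window truncations must be controlled uniformly in $j,l$ and in the window length, and it is precisely to obtain a sharp, evaluable rather than merely summable handle on these quantities that the exact Haar cross-correlation formula is derived; the summability conditions $\sum_j C_j^2 2^{2j}<\infty$, $\sum_j L_j^2 2^{2j}<\infty$ and $\sum_j W_j^2(z)2^{2j}<\infty$, paired with the bounded variation of $\phi$, are exactly what is needed to justify the Fubini-type interchanges and to make the scale series converge. A secondary delicate point is the bookkeeping of the two distinct error rates in the mean approximation, namely the frozen-replacement error and the window-truncation error, both of which must be shown to sit within $\calO\{L(T)^{-1}\}$.
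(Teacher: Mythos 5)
Your proposal follows essentially the same route as the paper: expand both statistics as quadratic forms in the orthonormal increments (equivalently, windowed covariance sums), decompose the coefficients $w^0_{\ell,k;T}$ around the frozen value $W_\ell(z)$ so that the frozen term is exactly $\E\{J^Y_{L(T)}(\phi)\}$, bound the remaining Lipschitz-controlled cross terms via the windowed cross-correlation wavelet bounds ($\sum_k\sum_j |i_{N,z}(j,\ell,k)|^2 = \calO(2^{2\ell})$ and its two-scale analogue), and obtain the concentration statements from Isserlis' theorem plus Chebyshev, the variance being $\calO\{L(T)^{-2}\}$. One small clarification: since both statistics are built from the \emph{same} finite window, the ``window-truncation'' discrepancy you list as a second error source cancels identically (as you yourself note), and in the paper the $\calO\{L(T)^{-1}\}$ rate arises purely from the normalization $H_{L(T)}^{-1}$ multiplying the Lipschitz cross terms.
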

\begin{proof}
Section~\ref{app:thm:JNphi} contains the full proof.
\end{proof}

An important difference between earlier literature in {\em this} area and our work is the
introduction of windowing.
We provide new  results on windowed versions of the cross-correlation wavelets, which we denote $i_{N, z}$, where,
to simplify notation, we replace $L(T)$ by $N$ and sometimes omit $z$.
To prove Theorem~\ref{thm:JNphi} we need
bounds on quantities involving $i_{N, z}$ which we can obtain via their connection with
cross-correlation wavelets and, in particular, our new {\em closed form} expression
for the cross-correlation Haar wavelet. For completeness, we
define the truncated
cross-correlation wavelet here and some of the key bounds.
\begin{definition}\label{def:iNz}
For $N \in \nats$, scales $j, \ell \in \nats$ and rescaled time $z \in (0, 1)$, the windowed cross-scale autocorrelation wavelets $i_{N, z} (j, \ell, \cdotp)$ over the interval $\left[ [zT]-N/2+1, [zT]+N/2 \right]$ are
\begin{equation}\label{eq:iNz}
i_{N, z} (j, \ell, k) = \sum_{t=0}^{N-1}  \psi_{j, [zT]-t} \psi_{\ell, k- [zT]-t-1+N/2},
\end{equation}
where $\{ \psi_{j, m} \}_{j,m}$ is a family of discrete wavelets and $k\in\ints$.
\end{definition}
The similarity between the cross-scale autocorrelation wavelets $\Psi_{j, \ell}(\cdotp)$, defined
in \citet[Definition~5.4.2]{Fryzlewicz03} as $\Psi_{j, \ell} (\tau) = \sum_{k \in \ints} \psi_{j, k} \psi_{\ell, k + \tau}$ for
$j, \ell \in \nats$ and $\tau \in \ints$, and their
windowed version, $i_{N, z}(j, \ell, \cdotp)$ defined above, is key to how we subsequently bound
quantities involving $i_{N, z}$.
The exact new formulae for Haar cross-scale autocorrelation wavelets are established
in Appendix~\ref{proof:psilj}, along with a pictorial description in Figure~\ref{fig:omegai} in
Appendix~\ref{app:add}.

As  bounds for $i_N(j, \ell, \cdotp)$ are a key component of the proof of Theorem~\ref{thm:JNphi}, these are provided by the next three results. The first bound for $i_N$ is valid for all discrete wavelets
based on  \cite{daubechies92:ten} compactly supported wavelets, although we later only
use it for Haar wavelets.

\begin{lemma}\label{lem:boundi}
Using  previous notation and assumptions,
 let $b_1 = [zT] + N/2 +1$ and $b_2 = [zT] + N/2 + N_\ell - 1$. Then
\begin{equation}
\label{eq:maineq}
| i_{N, z} (j, \ell, k) | \leq | \Psi_{j, \ell} ( k - 2[zT] + N/2 - 1)|,
\end{equation}
holds when
\begin{equation}
\label{eq:b1}
[zT] > N_j - 2 \mbox{ and } k <  b_1
\end{equation}
or when
\begin{equation}
\label{eq:b2}
k >  b_2,
\end{equation}
for integers $k$, $z \in (0,1)$, $j, \ell \in \nats$ and $N_j$ is the length of the discrete
wavelet
$\psi_{j, \cdot}$ for all
Daubechies compactly supported wavelets.

When $b_1 \leq k \leq b_2$ we have (i) for Daubechies' wavelets with two or more vanishing moments:
\begin{align}
| i_{N, z} (j, \ell, k) | &\leq | \Psi_{j, \ell} ( k - 2[zT] + N/2 - 1)|\nonumber\\
&+ \frac{2^{-(j+\ell)/2} }{[zT] - N}
	\left\{ \gamma +  \log ( [zT] -N ) - \log ( [zT] ) + \log N \right. \nonumber\\
&+ \left. \calO ( [zT]^{-1}) + \calO \{ ([zT]-N)^{-1} \} + \calO(N^{-1}) \right\}, \label{eq:part3}
\end{align}
where $\gamma$ is the Euler-Mascheroni constant and (ii) for Haar wavelets we have:
\begin{equation}
\label{eq:otherkbound}
| i_{N, z} (j, \ell, k) |  \leq 2^{-(j+\ell)/2}\left\{ \min(N_\ell,N_j)+N_\ell\right\}.
\end{equation}
\end{lemma}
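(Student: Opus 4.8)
The plan is to realise $i_{N,z}(j,\ell,k)$ as a windowed (partial) version of the cross-scale autocorrelation wavelet $\Psi_{j,\ell}$, and then to control the error committed by the windowing. Substituting $m=[zT]-t$ in \eqref{eq:iNz} rewrites the sum as $\sum_{m=[zT]-N+1}^{[zT]}\psi_{j,m}\psi_{\ell,m+\tau}$ with $\tau=k-2[zT]+N/2-1$, which is exactly the argument appearing on the right of \eqref{eq:maineq}. Since $\Psi_{j,\ell}(\tau)=\sum_{m\in\ints}\psi_{j,m}\psi_{\ell,m+\tau}$, the two objects differ only through the range of summation, so the entire lemma reduces to understanding how the length-$N$ window in $t$ truncates the (compactly supported) summand $\psi_{j,m}\psi_{\ell,m+\tau}$.

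First I would carry out the support bookkeeping. Using that the scale-$j$ (resp. scale-$\ell$) Daubechies wavelet is supported on an interval of length $N_j$ (resp. $N_\ell$), the summand is nonzero only for $t$ in an interval $I(k)$ whose position moves linearly with $k$. The thresholds $b_1,b_2$ are precisely the values of $k$ at which $I(k)$ begins to cross the window boundary, while the hypothesis $[zT]>N_j-2$ in \eqref{eq:b1} keeps the $\psi_{j}$-support clear of the opposite boundary. In the two outer regimes \eqref{eq:b1} and \eqref{eq:b2} the window either omits the support of the summand entirely, giving $i_{N,z}(j,\ell,k)=0$, or captures it in full, giving $i_{N,z}(j,\ell,k)=\Psi_{j,\ell}(\tau)$; in both cases \eqref{eq:maineq} is immediate, with equality in the second. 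This disposes of the outer-region claim.

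It remains to treat $b_1\le k\le b_2$, where the window cuts through $I(k)$ and $i_{N,z}$ is a strict partial sum. Here I would write $i_{N,z}(j,\ell,k)=\Psi_{j,\ell}(\tau)-R$, with $R$ the sum of the omitted (out-of-window) products, and bound $|i_{N,z}|\le|\Psi_{j,\ell}(\tau)|+|R|$. For Haar, $\psi_{j,m}=\pm 2^{-j/2}$ on its support and vanishes off it, so each surviving product equals $\pm 2^{-(j+\ell)/2}$ and one bounds $|i_{N,z}|$ by counting nonzero terms: at most $\min(N_j,N_\ell)$ from the overlap of the two supports and at most a further $N_\ell$ from the window-boundary truncation, which gives \eqref{eq:otherkbound}. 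For Daubechies wavelets with two or more vanishing moments I would instead exploit the moment cancellation through a summation-by-parts argument, which converts the tail $R$ into a sum whose terms decay like $2^{-(j+\ell)/2}/(\text{distance to the window boundary})$, the relevant distances ranging over roughly $[zT]-N$ to $[zT]$.

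The hardest step is this last one: evaluating the resulting tail sum in the Daubechies case. Summing the $1/(\text{distance})$ contributions produces a partial harmonic series, and invoking $\sum_{n=1}^{M}n^{-1}=\log M+\gamma+\calO(M^{-1})$ at the two window endpoints yields the $\gamma$, $\log([zT]-N)$, $\log[zT]$ and $\log N$ terms together with the $\calO([zT]^{-1})$, $\calO(([zT]-N)^{-1})$ and $\calO(N^{-1})$ remainders of \eqref{eq:part3}. Producing the exact prefactor $2^{-(j+\ell)/2}/([zT]-N)$ and the stated error terms --- rather than a bound up to an unspecified constant --- is the delicate part, since it requires the quantitative decay estimate where the two-vanishing-moments hypothesis is genuinely used, together with careful Euler--Maclaurin control of the harmonic tails; the Haar branch is comparatively immediate precisely because its amplitudes are constant and no such cancellation is available.
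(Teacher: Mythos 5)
Your treatment of the outer regimes \eqref{eq:b1} and \eqref{eq:b2} is exactly the paper's: after the substitution $s=[zT]-t$ identifies $i_{N,z}$ as a truncation of $\Psi_{j,\ell}$, support bookkeeping shows $i_{N,z}(j,\ell,k)=0$ for $k>b_2$ and $i_{N,z}(j,\ell,k)=\Psi_{j,\ell}(\tau)$ exactly under \eqref{eq:b1} (the hypothesis $[zT]>N_j-2$ killing the tail above the window, $k<b_1$ killing the tail below), so \eqref{eq:maineq} is immediate. For the Haar bound \eqref{eq:otherkbound}, however, you take a genuinely different and much simpler route. The paper computes the discrete Fourier transform of the Haar wavelet, writes $i_{N,z}$ as a double integral involving Dirichlet-kernel factors, splits into cases according to how the wavelet support meets the window (the functions $G_{j,B,N}$), and invokes the Fej\'{e}r-kernel identity of Lemma~\ref{lem:intres}. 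Your counting argument --- each nonzero Haar product has modulus exactly $2^{-(j+\ell)/2}$, and the two supports overlap in at most $\min(N_j,N_\ell)$ integers, an intersection that the window can only shrink --- is correct, holds for \emph{every} $k$, and in fact yields the stronger bound $|i_{N,z}(j,\ell,k)|\le 2^{-(j+\ell)/2}\min(N_j,N_\ell)$, of which \eqref{eq:otherkbound} is a weakening; your extra ``$+N_\ell$ from truncation'' is unnecessary (truncation only removes terms) but harmless. What the paper's Fourier machinery buys is not sharpness here but reusability, since the same kernel manipulations drive other Haar computations in the appendices.

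The genuine gap is in the Daubechies branch \eqref{eq:part3}. You defer its entire content to an unexecuted ``summation-by-parts with vanishing moments'' step, and, more importantly, the outcome you describe --- tail terms decaying like $2^{-(j+\ell)/2}$ divided by a \emph{single} distance to the window boundary, with distances running over $[zT]-N$ to $[zT]$ --- cannot assemble into the stated estimate: summing one reciprocal distance over that range gives roughly $\log[zT]-\log([zT]-N)$, with no mechanism for producing the prefactor $\{[zT]-N\}^{-1}$ or the combination $\gamma+\log([zT]-N)-\log[zT]+\log N$. What the paper actually uses, in its inequality \eqref{eq:line1}, is a \emph{product} of two reciprocal distances, $|i_{N,z}(j,\ell,k)|\le 2^{-(j+\ell)/2}K^2\sum_{s=[zT]-N+1}^{[zT]}s^{-1}(s-[zT]+N)^{-1}$; partial fractions convert this into $([zT]-N)^{-1}\left(\mathcal{H}_N-\mathcal{H}_{[zT]}+\mathcal{H}_{[zT]-N}\right)$, and $\mathcal{H}_n=\log n+\gamma+\calO(n^{-1})$ then delivers \eqref{eq:part3} exactly. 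This bilinear-in-$s$ structure (simultaneous distance to both ends of the window) is the missing idea in your sketch. In fairness, the paper itself only asserts \eqref{eq:line1} with ``it can be shown that,'' so you are at no greater rigour deficit than the published argument; but as written, your mechanism would not reproduce the form of \eqref{eq:part3}.
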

\begin{proof}
See Section~\ref{app:boundi}.
\end{proof}
We use Lemma~\ref{lem:boundi} to prove the next two useful results about
$i_N$.
\begin{lemma}\label{lem:order2l}\label{lem:orderlpm}
Using previous notation and assumptions, and assuming $\{ \psi_{j, k} \}$ are discrete
Haar wavelets
\begin{align}
\sum_{k=-\infty}^\infty \sum_{j=1}^{\infty} \av i_{N,z}(j, \ell, k) \av^2 &=
\calO(2^{2\ell}),\label{eqn:order2I} \\
%
%
\sum_{k=-\infty}^\infty \sum_{n=-\infty}^\infty \left\{\sum_{j=1}^{\infty}\av  i_{N,z}(j, \ell, k)
i_{N,z}(j,m,n) \av\right\}^2 &= \calO \{ 2^{(\ell+m)}\}.\label{eqn:2lpm}
\end{align}
\end{lemma}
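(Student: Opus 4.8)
The plan is to reduce everything to the untruncated cross-scale autocorrelation wavelets $\Psi_{j,\ell}$ by means of Lemma~\ref{lem:boundi}, and to treat the ``bulk'' and the short ``boundary band'' in the $k$-variable separately. For each pair $(j,\ell)$ I would split the sum over $k$ into the region $k<b_1$ or $k>b_2$, where \eqref{eq:maineq} gives $\av i_{N,z}(j,\ell,k)\av\le\av\Psi_{j,\ell}(k-2[zT]+N/2-1)\av$, and the band $b_1\le k\le b_2$ of width at most $N_\ell$, where the Haar bound \eqref{eq:otherkbound} applies. After this split all scale sums become geometric series in $2^{-j}$, and the whole argument rests on the closed-form Haar expression for $\Psi_{j,\ell}$ derived in Appendix~\ref{proof:psilj}.

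For \eqref{eqn:order2I} I would argue as follows. In the bulk, $\sum_{k}\sum_j\av i_{N,z}(j,\ell,k)\av^2\le\sum_j\sum_{\tau}\av\Psi_{j,\ell}(\tau)\av^2$, and from the Haar closed form one checks that $\sum_\tau\av\Psi_{j,\ell}(\tau)\av^2=\calO(2^{2\min(j,\ell)-\max(j,\ell)})$, so that the geometric series $\sum_j 2^{2\min(j,\ell)-\max(j,\ell)}$ is $\calO(2^\ell)$ and the bulk is subdominant. In the band the width is at most $N_\ell$, and \eqref{eq:otherkbound} gives $\av i_{N,z}(j,\ell,k)\av^2\le 2^{-(j+\ell)}\{\min(N_j,N_\ell)+N_\ell\}^2$; summing over the $\calO(N_\ell)$ admissible $k$ and then over $j$ (splitting the scale sum at $j=\ell$ and summing the two geometric tails) produces the dominant $\calO(2^{2\ell})$. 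Adding the two contributions gives \eqref{eqn:order2I}.

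For \eqref{eqn:2lpm} I would first expand the square and interchange summations, writing
\[
\sum_{k}\sum_{n}\Bigl\{\sum_{j}\av i_{N,z}(j,\ell,k)\,i_{N,z}(j,m,n)\av\Bigr\}^2
=\sum_{j}\sum_{j'}A_{j,j'}\,B_{j,j'},
\]
where $A_{j,j'}=\sum_k\av i_{N,z}(j,\ell,k)\,i_{N,z}(j',\ell,k)\av$ and $B_{j,j'}=\sum_n\av i_{N,z}(j,m,n)\,i_{N,z}(j',m,n)\av$. Each factor is estimated by the same bulk/band decomposition: in the bulk one bounds $A_{j,j'}$ by $\sum_\tau\av\Psi_{j,\ell}(\tau)\,\Psi_{j',\ell}(\tau)\av$ and controls it through the Haar closed form (or by Cauchy--Schwarz together with the per-scale estimates obtained while proving \eqref{eqn:order2I}), while in the band the $j$- and $j'$-factors decouple and are bounded by \eqref{eq:otherkbound}. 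Substituting these into $\sum_{j,j'}A_{j,j'}B_{j,j'}$ leaves a double geometric series in $(j,j')$ whose summation gives the stated order in $\ell$ and $m$.

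The main obstacle is the boundary band. Outside it, the identity reducing the bulk part of $\sum_k\av i_{N,z}(j,\ell,k)\av^2$ to the autocorrelation-wavelet inner product $\sum_\tau\Psi_j(\tau)\Psi_\ell(\tau)$ makes that contribution transparent and manifestly subdominant; it is the windowing correction on $[b_1,b_2]$ that supplies the leading term, and there one cannot avoid the exact Haar expression for $\Psi_{j,\ell}$ if the estimate is to stay sharp. For \eqref{eqn:2lpm} the extra difficulty is the bookkeeping of the coupled double sum over scales $(j,j')$ and simultaneously over the two position variables $k$ and $n$, so that the off-diagonal terms $j\neq j'$ do not inflate the order; the fast decay of $\Psi_{j,\ell}$ in $\av j-\ell\av$ coming from the closed-form formula is precisely what keeps these series convergent at the claimed exponent.
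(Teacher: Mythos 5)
Your proposal is correct and follows essentially the same route as the paper's proof: the same split of each position sum into the boundary band $[b_1,b_2]$, bounded via \eqref{eq:otherkbound}, and its complement, bounded via \eqref{eq:maineq} together with the closed-form Haar quantities $B^{(0)}_\ell(j,p)$ of Proposition~\ref{prop:B}; and, for \eqref{eqn:2lpm}, the same interchange of sums into $\sum_{j,j'}A_{j,j'}B_{j,j'}$ followed by band/bulk decoupling, which is precisely the paper's decomposition into the four terms handled in Lemmas~\ref{lem:bothB}, \ref{lem:cross} and~\ref{lem:PsiBound}. The only caveat is that this argument (yours and the paper's alike) actually delivers $\calO\{2^{2(\ell+m)}\}$ for \eqref{eqn:2lpm} --- for instance the band--band contribution alone is of size roughly $2^{\ell}2^{m}\cdot 2^{\ell+m}$ --- so your remark that the double geometric series ``gives the stated order'' $\calO\{2^{\ell+m}\}$ overstates what the method proves; the displayed exponent appears to be a slip in the paper itself, whose supporting lemmas likewise establish only $\calO\{2^{2(\ell+m)}\}$, and it is that weaker bound which is actually invoked in the proof of Theorem~\ref{thm:JNphi}.
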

\begin{proof}
See Section~\ref{app:orderlpm}.
\end{proof}
These properties of the integrated local wavelet periodogram allow us to establish the asymptotic
behaviour of $\ptildeW{z}{\tau}$ in the following section.

\subsection{Windowed local partial autocorrelation estimation}\label{sec:windowlpacf}
We now define a  local partial autocorrelation estimator by using the classical (stationary) partial autocorrelation computed on a window of length $L(T)$ centred at time $[zT]$. The theoretical properties of this windowed estimator are derived and we investigate its empirical behaviour.

\begin{definition}
\label{def:windowed}
Let $\hat{q}$ be the usual partial autocorrelation estimator as defined by 
\citet[Definition~3.4.3]{brockwell91:time} for example.
Define the window $\calI(z, L) := \left[ z- L(T)/2T, z+ L(T)/ 2T \right]$ for some interval length
function $L(T)$ and location $z\in (0,1)$.
Define the windowed estimator, $\ptildeW{z}{\tau}$, of the local partial autocorrelation
function at rescaled time $z$ and lag $\tau$, to be the classical partial autocorrelation function
evaluated on observations contained in $\calI(z, L)$ and denoted by
\beq\nonumber\label{eq:estqw}
\ptildeW{z}{\tau}=\hat{q}_{\calI (z, L)}(\tau).
\eeq
\end{definition}
Our definition uses a rectangular window, but some of our applications later use
an  Epanechnikov window. Other variants could also be substituted.

The integrated wavelet periodogram approximation derived in Theorem~\ref{thm:JNphi} ensures that our windowed estimator can benefit from the established asymptotic distributional properties of the partial autocovariance \bcb estimator \ecb in the stationary setting, \bcb including its standard deviation, relevant for practical tasks. \ecb

\begin{proposition}\label{coro:pwindow}
Let $\{X_{t,T}\}$ be a zero-mean Gaussian locally stationary wavelet process under the conditions
set out by Theorem~\ref{thm:JNphi}. Then, for the windowed local partial autocorrelation estimator
$\ptildeW{z}{\tau}$ from Definition~\ref{def:windowed},
assuming $L(T) \rightarrow \infty$ and $L(T)/T \rightarrow 0$, as $T\rightarrow\infty$, we have that $\ptildeW{z}{\tau} $ converges in distribution to $\hat{q}^{Y}(\tau)$, where $Y$ is a stationary process with the same characteristics at rescaled time $z$ as the process $\{X_{t,T}\}$ (constructed as in equation~\eqref{eq:styY}) and $\hat{q}^{Y}(\tau)=\hat{\ph}^Y_{\tau,\tau}$ is the classical Yule-Walker partial autocorrelation function estimator.
\end{proposition}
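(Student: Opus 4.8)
The plan is to exploit the fact that the windowed estimator $\ptildeW{z}{\tau}=\hat q_{\calI(z,L)}(\tau)$ from Definition~\ref{def:windowed} is, through the Durbin--Levinson recursion underlying \citet[Definition~3.4.3]{brockwell91:time}, a fixed continuous function $g$ of the windowed sample autocovariances $\hat\gamma_L(0),\dots,\hat\gamma_L(\tau)$ computed on $\calI(z,L)$. Crucially, the \emph{same} map $g$ sends the windowed sample autocovariances of the frozen stationary surrogate $\{Y_s\}$ from~\eqref{eq:styY} to $\hat q^Y(\tau)=\hat{\ph}^Y_{\tau,\tau}$. The map $g$ is continuous at every point where the covariance matrix it inverts is positive definite; near the limiting (frozen) autocovariances this holds because the nondegeneracy built into the process (cf.\ $C_1>0$ in Definition~\ref{def:lpacf:lpacf}) keeps the frozen spectral density bounded away from zero. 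Hence it suffices to show that the windowed sample-autocovariance vector of $\{X_{t,T}\}$ and that of $\{Y_s\}$ share the same limiting law, and then transfer this through $g$ by the continuous mapping theorem.

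First I would represent each windowed sample autocovariance as an integrated local wavelet periodogram. Using the discrete nondecimated Haar autocorrelation-wavelet reconstruction of the covariance, one can write $\hat\gamma_L(k)=J_{L(T)}(z,\phi^{(k)})$ for a weight sequence $\phi^{(k)}$ determined by the autocorrelation wavelet at lag $k$, and correspondingly $\hat\gamma^Y_L(k)=J^Y_{L(T)}(\phi^{(k)})$ for the surrogate. A preliminary step here is to check that each $\phi^{(k)}$ lies in $\Phi$ and is of bounded variation, which follows from the boundedness and the monotone tail behaviour of the Haar autocorrelation wavelets, whose exact formulae are available. This places every $\hat\gamma_L(k)$ within the scope of Theorem~\ref{thm:JNphi}.

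Next I would pass to the joint law of the vector via the Cram\'er--Wold device: an arbitrary linear combination $\sum_{k=0}^{\tau} a_k\hat\gamma_L(k)$ equals $J_{L(T)}(z,\phi)$ with $\phi=\sum_k a_k\phi^{(k)}\in\Phi$, still of bounded variation, and likewise for $\{Y_s\}$. Applying Theorem~\ref{thm:JNphi} to this single $\phi$ shows that the $X$- and $Y$-statistics agree in mean up to $\calO\{L(T)^{-1}\}$ and have centred fluctuations that agree in the limit, so every such linear combination has the same limit under $X$ and under $Y$. Therefore the windowed sample-autocovariance vectors converge to a common joint limit, and applying $g$ yields $\ptildeW{z}{\tau}\to\hat q^Y(\tau)$ in distribution; the explicit limiting law and asymptotic standard deviation then follow from the classical stationary Yule--Walker theory for $\hat q^Y(\tau)$.

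The main obstacle is the distributional matching of the two sample-autocovariance vectors, i.e.\ showing that freezing the time-varying spectrum at the rescaled time $z$ does not perturb the asymptotic law of the windowed quadratic-form estimators. This is exactly what Theorem~\ref{thm:JNphi} delivers, and its force rests on replacing the full cross-correlation wavelets $\Psi_{j,\ell}$ by their windowed counterparts $i_{N,z}$ of Definition~\ref{def:iNz} and controlling both the boundary (edge) error and the scale tail. The bounds of Lemmas~\ref{lem:boundi} and~\ref{lem:order2l}, together with the summability hypotheses $\sum_j C_j^2 2^{2j}<\infty$ and $\sum_j L_j^2 2^{2j}<\infty$, are precisely what make these errors negligible uniformly over the window; verifying the integrated-periodogram representation above and this uniform control is the technical heart of the argument.
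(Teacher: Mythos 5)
Your proposal is correct and follows essentially the same route as the paper: the paper's proof likewise writes the windowed covariance estimates as integrated local wavelet periodograms $J_{L(T)}(z,\phi)$, invokes Theorem~\ref{thm:JNphi} to get $J_{L(T)}(z,\phi)=J^Y_{L(T)}(\phi)+\calO\{L(T)^{-1}\}+o_p\{L(T)^{-1/2}\}$, and then transfers this through the Yule--Walker solve to conclude that $\ptildeW{z}{\tau}$ has the stationary asymptotic distribution, citing \cite{dahlhaus98:on} for that last step. The only difference is explicitness: the Cram\'er--Wold device, the continuity of the Yule--Walker map near a positive definite covariance, and the bounded-variation check on the weight sequences, which you spell out, are left implicit in the paper's appeal to arguing ``in the manner of'' \cite{dahlhaus98:on}.
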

\begin{proof}
Section~\ref{app:coro:pwindow} contains the proof, which
relies on the integrated wavelet periodogram approximation
from Theorem~\ref{thm:JNphi}.
\end{proof}

When dealing with processes that can be locally well modelled by an autoregressive structure, the result above amounts to establishing the asymptotic normality of our windowed local partial autocovariance estimator for large lags (see next corollary).

\begin{corollary}\label{coro:pnormal}
Under the assumptions from Proposition \ref{coro:pwindow} and assuming that $\{X_{t,T}\}$
can be locally well modelled by an autoregressive structure of order say $p$, then for lags $\tau$ larger than $p$ we have that $L(T)^{1/2}\ptildeW{z}{\tau}$ converges in distribution to a standard normal random variable.
\end{corollary}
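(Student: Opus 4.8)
The plan is to transfer the classical asymptotic normality of the sample partial autocorrelation of a stationary autoregression, at lags beyond its order, to the windowed estimator $\ptildeW{z}{\tau}$ through the stationary surrogate process $\{Y_s\}$ constructed in~\eqref{eq:styY}. Throughout I write $N = L(T)$ for the window length, in keeping with the notation introduced for $i_{N,z}$. The local autoregressive hypothesis at rescaled time $z$ means precisely that the local autocovariance of $\{X_{t,T}\}$ at $z$ is that of an AR$(p)$; since $Y$ is built to share these second-order characteristics, $Y$ is itself a stationary AR$(p)$ process. Consequently its \emph{true} partial autocorrelation $\ph^Y_{\tau,\tau}$ vanishes identically for every $\tau > p$, and equivalently $\p{z}{\tau} = 0$ for $\tau > p$; this is the standard algebraic property of the partial autocorrelation of an AR$(p)$ (see \citet[Section~3.4]{brockwell91:time}).

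First I would invoke the classical stationary theory for $Y$. Because $Y$ is a purely nondeterministic stationary AR$(p)$ with positive innovation variance, Quenouille's theorem (e.g.\ \citet[Theorem~8.1.2]{brockwell91:time}) gives, for each $\tau > p$, that the Yule--Walker sample partial autocorrelation $\hat{q}^Y(\tau) = \hat{\ph}^Y_{\tau,\tau}$ computed from $N$ consecutive observations of $Y$ satisfies $N^{1/2}\hat{q}^Y(\tau) \xrightarrow{d} \mathcal{N}(0,1)$ as $N \to \infty$. The conditions of Proposition~\ref{coro:pwindow} guarantee $N = L(T) \to \infty$ as $T \to \infty$, so this limit is available along our window lengths.

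Next I would control the discrepancy between $\ptildeW{z}{\tau}$ and $\hat{q}^Y(\tau)$ at the sharp scale $N^{1/2}$. Both estimators are the \emph{same} smooth (Levinson--Durbin/Yule--Walker) functional $g$ of their respective windowed autocovariance sequences, and these autocovariances are recovered from the integrated local wavelet periodograms $J_{L(T)}(z,\phi)$ and $J^Y_{L(T)}(\phi)$ by choosing the coefficient sequence $\phi$ to match the relevant cross-correlation wavelet, exactly as set up in the proof of Proposition~\ref{coro:pwindow}. Theorem~\ref{thm:JNphi} then shows that these periodogram functionals agree up to a deterministic bias of order $\calO\{L(T)^{-1}\}$ from~\eqref{eq:approx1} and centred fluctuations that are each $o_p(L(T)^{-1/2})$ from~\eqref{consist1} and~\eqref{consist2}; since $L(T)^{-1} = o\{L(T)^{-1/2}\}$, the windowed autocovariances of $X$ and of $Y$ differ by $o_p(L(T)^{-1/2})$. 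The map $g$ is continuously differentiable in a neighbourhood of the common limiting autocovariances, which are non-degenerate because the AR$(p)$ autocovariance matrices being inverted are positive definite. A mean-value/delta-method argument therefore transfers this rate through $g$ unchanged, yielding $\ptildeW{z}{\tau} - \hat{q}^Y(\tau) = o_p(L(T)^{-1/2})$, i.e.\ $L(T)^{1/2}\{\ptildeW{z}{\tau} - \hat{q}^Y(\tau)\} = o_p(1)$. Combining this with the previous step via Slutsky's theorem gives $L(T)^{1/2}\ptildeW{z}{\tau} \xrightarrow{d} \mathcal{N}(0,1)$ for $\tau > p$.

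The main obstacle I anticipate is precisely the scale-matching in the third step: making rigorous that the $L(T)^{1/2}$ normalisation does not amplify the locally-stationary-to-stationary approximation error. This is exactly why the fluctuation bounds in Theorem~\ref{thm:JNphi} are stated as $o_p(L(T)^{-1/2})$ rather than merely $o_p(1)$ — they are calibrated so that, after multiplication by the rate $L(T)^{1/2}$, the gap between the two estimators is asymptotically negligible. Some care is also needed to check that $g$ is Lipschitz, indeed smooth, \emph{uniformly} on a compact neighbourhood of the true autocovariances, so that the rate genuinely survives the nonlinear Yule--Walker transformation; the positive-definiteness of the AR$(p)$ covariance matrices keeps the inverse bounded and hence $g$ well behaved there.
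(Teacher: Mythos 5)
Your proposal is correct and takes essentially the same approach as the paper: its proof of this corollary simply combines Proposition~\ref{coro:pwindow} (the stationary approximation by $Y$) with the classical Yule--Walker asymptotics of \citet[Theorem~8.1.2]{brockwell91:time}, exactly the two ingredients you use. The only difference is that you unpack Proposition~\ref{coro:pwindow} back down to the $o_p\{L(T)^{-1/2}\}$ bounds of Theorem~\ref{thm:JNphi} and add a delta-method step to verify that the $L(T)^{1/2}$ scaling survives the locally-stationary-to-stationary approximation, a detail the paper leaves implicit in its one-line citation.
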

\begin{proof}
The proof follows directly from Proposition \ref{coro:pwindow} and classical theory on the asymptotic behaviour of Yule-Walker estimates for stationary autoregressive processes (see for instance Theorem 8.1.2 from \cite{brockwell91:time}).
\end{proof}

\subsection{Choice of Control Parameters}
\label{sec:choicecontrol}
As with many nonparametric estimation methods in the literature,
we have to make various choices in an attempt
to obtain good estimators $\tilde{q}_W(z, \tau)$. Unfortunately there is no universal automatic best choice, at least in the real world.
For the wavelet estimator, $\tilde{q}$, we have to specify an underlying wavelet, a method
for handling boundaries and also a smoothing parameter, e.g.\ $s$ in Section~3.3 of
\cite{nason13}. However, a further advantage of the windowed estimator is that
we really only have to choose the window width $L(T)$
and the window kernel. \cite{dahlhaus98:on} show that
the Epanechnikov window is a good choice,
which we also advocate here.

Unfortunately, rates of convergence of the estimator, although providing theoretical
insight, do not really help
with the practical selection of the window width. A promising direction for practical bandwidth selection might be
via methods such as the locally stationary process bootstrap for pre-periodogram-like quantities,
as proposed by \cite{kreiss14:bootstrapping}, but development of this is beyond the scope of the current paper.

Below, we use a
manually-selected window width, by observing choices that achieve a good balance between
estimates that are too rough, and those that appear too smooth (and change little on further
smoothing). Section~\ref{sec:eastport} and Appendix~\ref{sec:eastportwidths} provide some empirical
evidence that the window width choice is not too hard, and the results are not particularly sensitive to it.
Such manually-selected procedures are well-acknowledged in the literature,\begin{changebar}
e.g. 
\cite{chaudhuri99:sizer}, although \bcb a cross-validation method for bandwidth selection is available in our associated software at increased computational cost. \ecb
This cross-validation combines a series of dyadic cross-validations, each
a simple extension of the even/odd dyadic cross-validation for wavelet shrinkage found
in \cite{nason96}.\end{changebar}


\section{Local partial autocorrelation estimates in practice}\label{sec:4}

\subsection{Simulated nonstationary autoregressive examples}

\bcb We illustrate our local partial autocorrelation function estimators on two simple, well-understood examples: (a) simulated time-varying autoregressive process TVAR$(1)$ and(b) piecewise AR$(p)$. \ecb

\bcb Consider a single $T=512$ realization from a time-varying autoregressive process with lag one coefficient linearly changing from $0.9$ to $-0.9$ over the series.  Figure~\ref{tvar1} shows the  partial autocorrelation function estimators, under the classical assumption of process stationarity (top left plot) and our two (time-dependent) estimators (top right and bottom plots). \ecb  The 95\% confidence bands are constructed under the null hypothesis of white noise and are the standard ones as displayed by, e.g., established R software. \bcb The red dotted lines show the true partial autocorrelation, a linear function of time at lag $1$, and constant ($0$) through time from lag $2$ onwards. \ecb
\begin{figure}
\centering
\resizebox{0.45\textwidth}{!}{\includegraphics{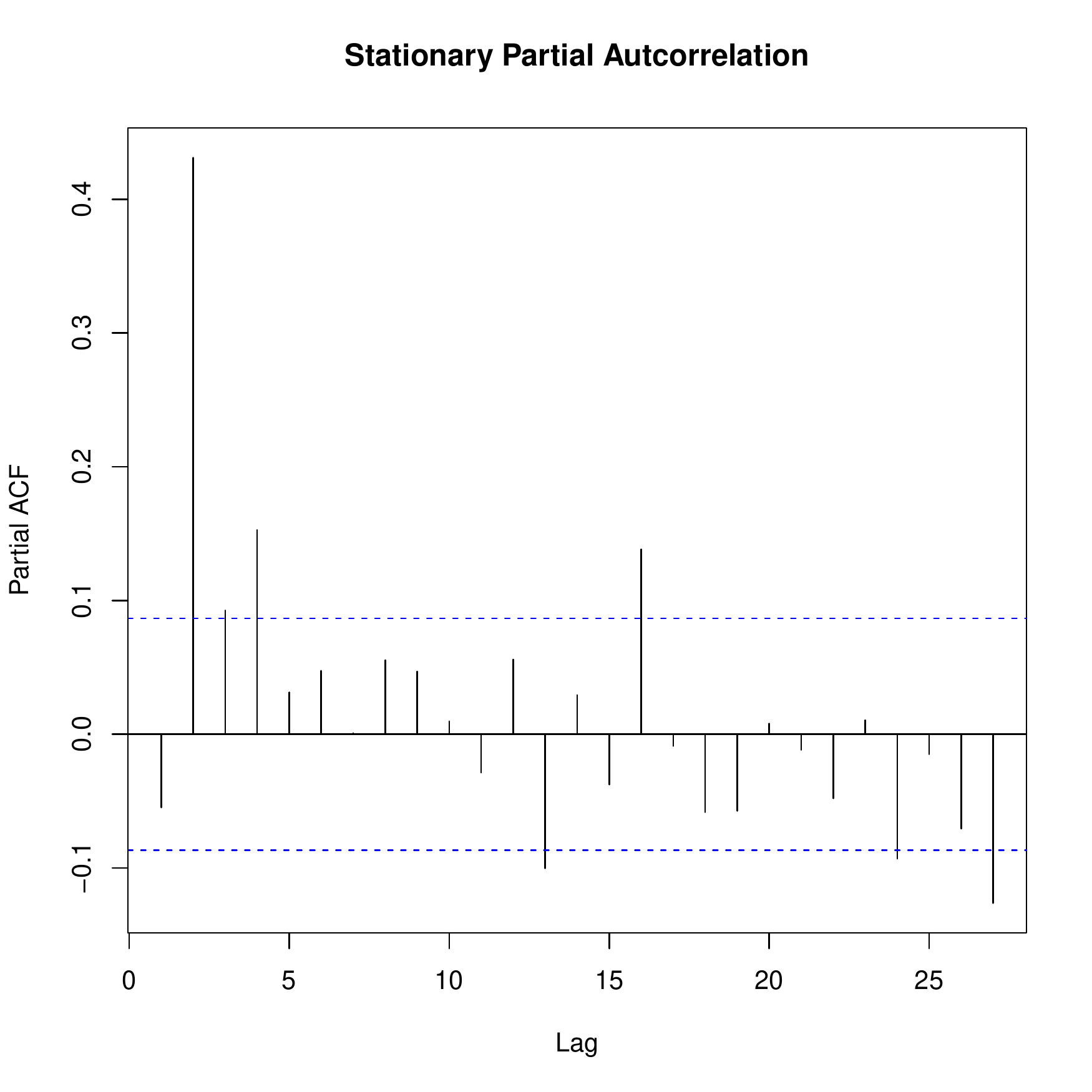}} \
\resizebox{0.45\textwidth}{!}{\includegraphics{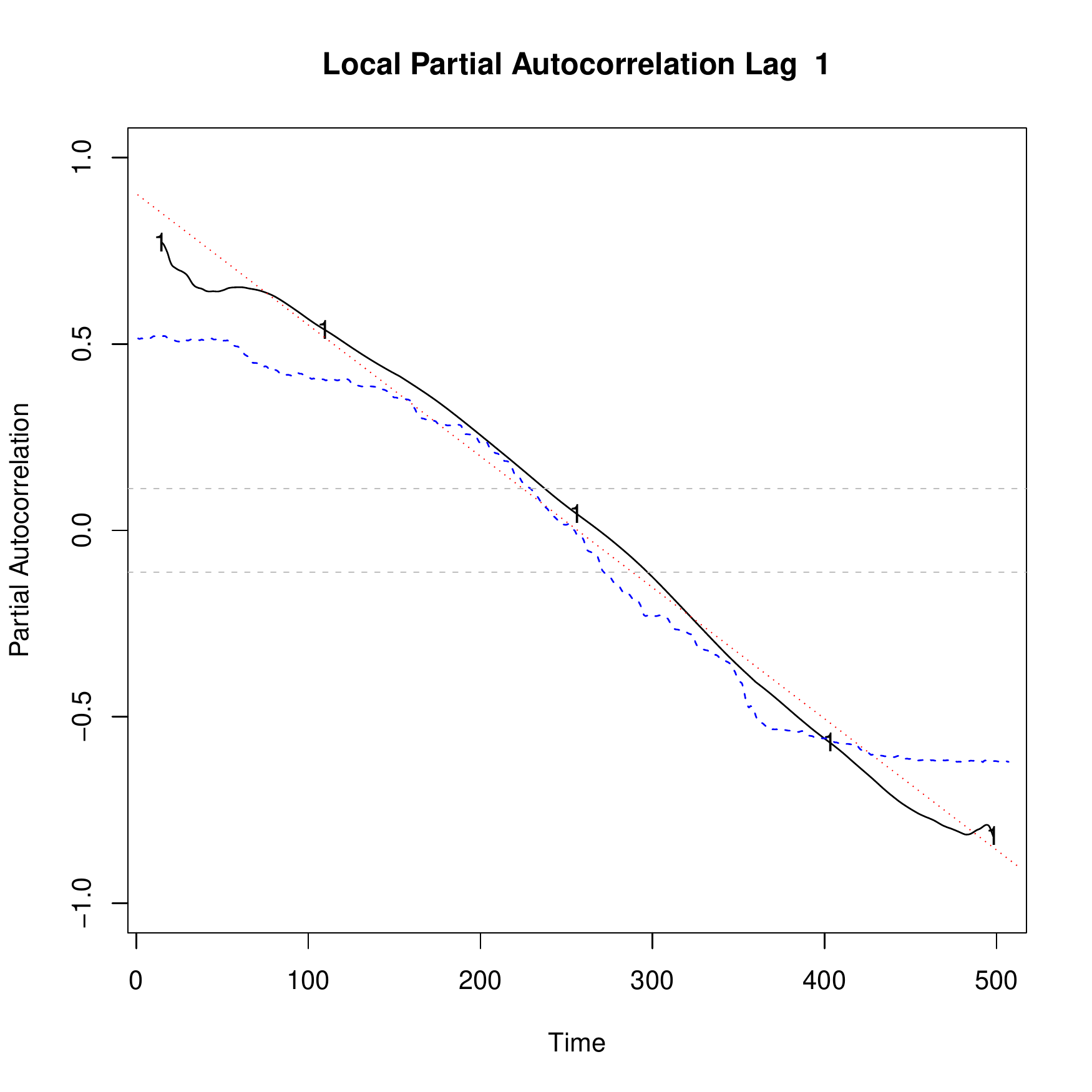}}\\
\resizebox{0.45\textwidth}{!}{\includegraphics{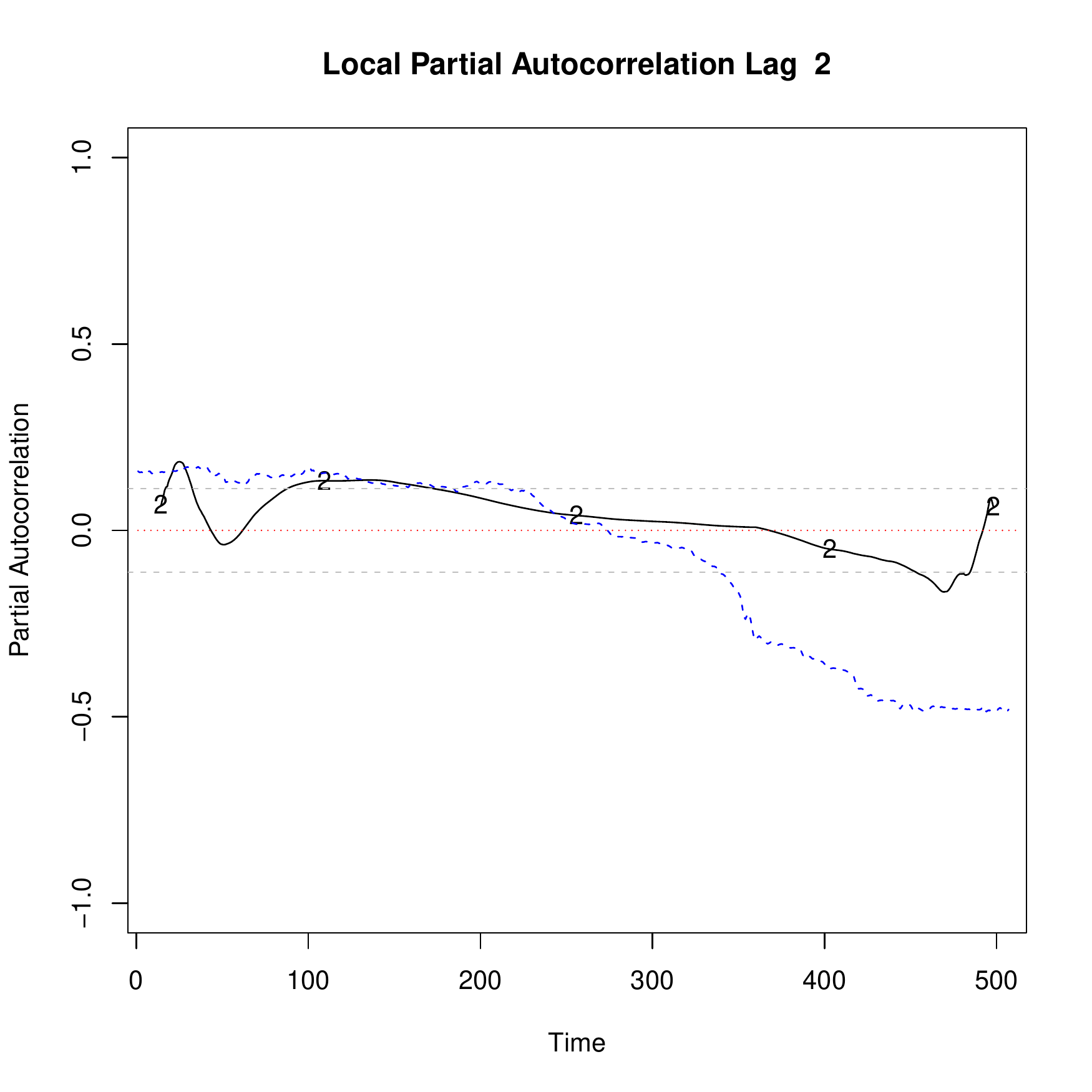}}
\caption{Partial autocorrelation function (pacf)
estimators applied to a single realization of a time-varying
autoregressive process. {\em Top left:} classical (stationary) pacf.
{\em Top right:} Lag one local pacf.  {\em Bottom:} Lag
two local pacf. Theoretical: red dotted line, Daubechies D5 wavelet-based estimate
($\tilde{q}$, Section \ref{sec:lpacf:est}) is dashed blue line,
Epanechnikov windowed estimate ($\tilde{q}_W$,
Section \ref{sec:3}) is solid black line.\label{tvar1} Bandwidth selected
using {\tt AutoBestBW} from {\tt locits} package.}
\end{figure}

Unsurprisingly, the classical partial autocorrelation is  misleading\bcb, indicating a significant {\em incorrect} strong lag two structure, and entirely failing to detect the existing (true) lag $1$ dependence. By contrast, our two developed local partial autocorrelation estimators correctly track the true {\em time-dependent} autoregressive parameters, thereby showing the importance of not using techniques designed for stationary series on nonstationary ones.  Amongst our two proposals, \ecb the wavelet-based estimate seems a bit worse, particularly for the lag two partial autocorrelation after about time $350$. This was confirmed by a small simulation study, based on $100$ realizations drawn from the TVAR process. The average root-mean-square error for the wavelet estimator \bcb (times $10^{2}$, standard errors in parantheses)
at lags one and two was $2.4 (0.76)$ and  $18.0 (3.9)$, respectively, whereas for the windowed estimator it was $1.5 (0.70)$ and $27.9 (4.7)$ respectively. \ecb Both estimators are less accurate near the ends of the series, which is a common problem with such estimators, see~\cite{CH03}, for example. However, the windowed estimator usually appears less affected, \bcb and thus is the estimator we propose to use in practice. \ecb


The TVAR process used in Figure~\ref{tvar1} exhibits a large range of time-varying parameter values
from $-0.9$ to $+0.9$.
However, we repeated the example for less extreme parameter changes. Unless the parameter change is very
small, and the process is close to stationary, the classical partial autocorrelation still misleads. For smaller parameter changes, the
classical partial autocorrelation often gets the process order correct, but gives a partial autocorrelation value that is often close
to the average of the local partial autocorrelations.

\bcb
Our second example considers a piecewise stationary AR$(p)$ process of length $T=256$.  The first and last segments (each of length 85) are realizations of an AR$(1)$ process with $\phi=-0.2$, and the middle segment (of length 86) follows an AR$(2)$ process with $\phi=(0.5,0.2)$.  Note the middle segment has a significantly different structure to the first and last.  Our estimators correctly identify the process structure, otherwise invisible to classical approaches. This is verified by performing a small simulation study and drawing from this process 100 times. The average root-mean-square error for the wavelet estimator at lags one and two (times $10^2$, standard errors in parentheses) is $11\,(2)$ and  $3\,(2)$, respectively, whereas for the windowed estimator it is $7\,(1)$ and $3\,(1)$ respectively.  The process lag 2 structure is closer to stationarity (with corresponding true pacf 0, 0.2, 0 in the three segments) and this is reflected in the similar results for the two estimators.
\ecb


\subsection{U.K.\ National Accounts data}\label{example:abml}
The ABML time series obtained from the Office for National Statistics contains
values of the U.K.\ gross-valued added (GVA), which is a component of gross
domestic product (GDP).
Our ABML series is recorded quarterly from quarter one 1955 to quarter three 2010 and consists
of $T=223$ observations. \begin{figure}[ht]
\centering
 \subfigure{\includegraphics[width=0.45\textwidth]{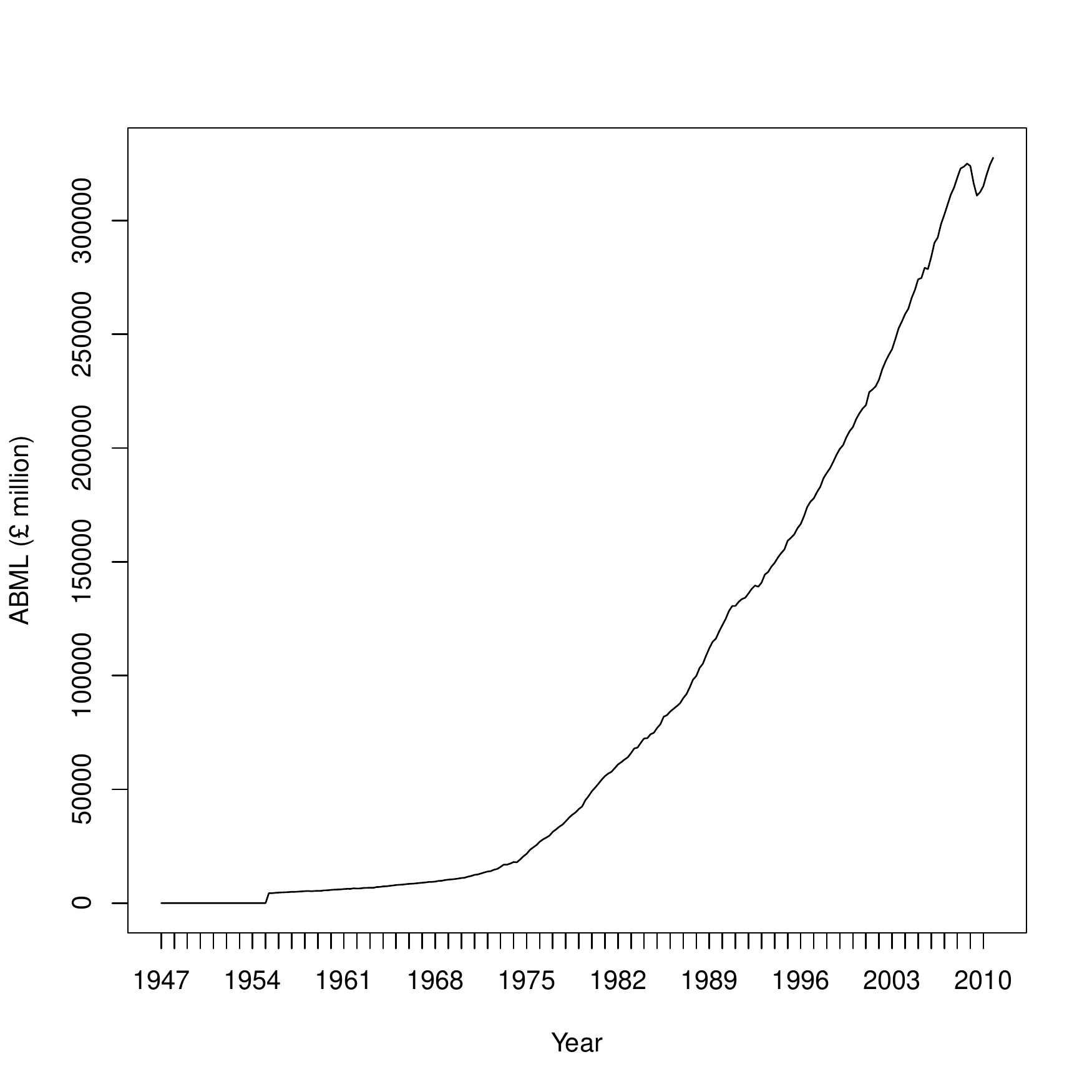}}
 	\hfill
  \subfigure{\includegraphics[width=0.45\textwidth]{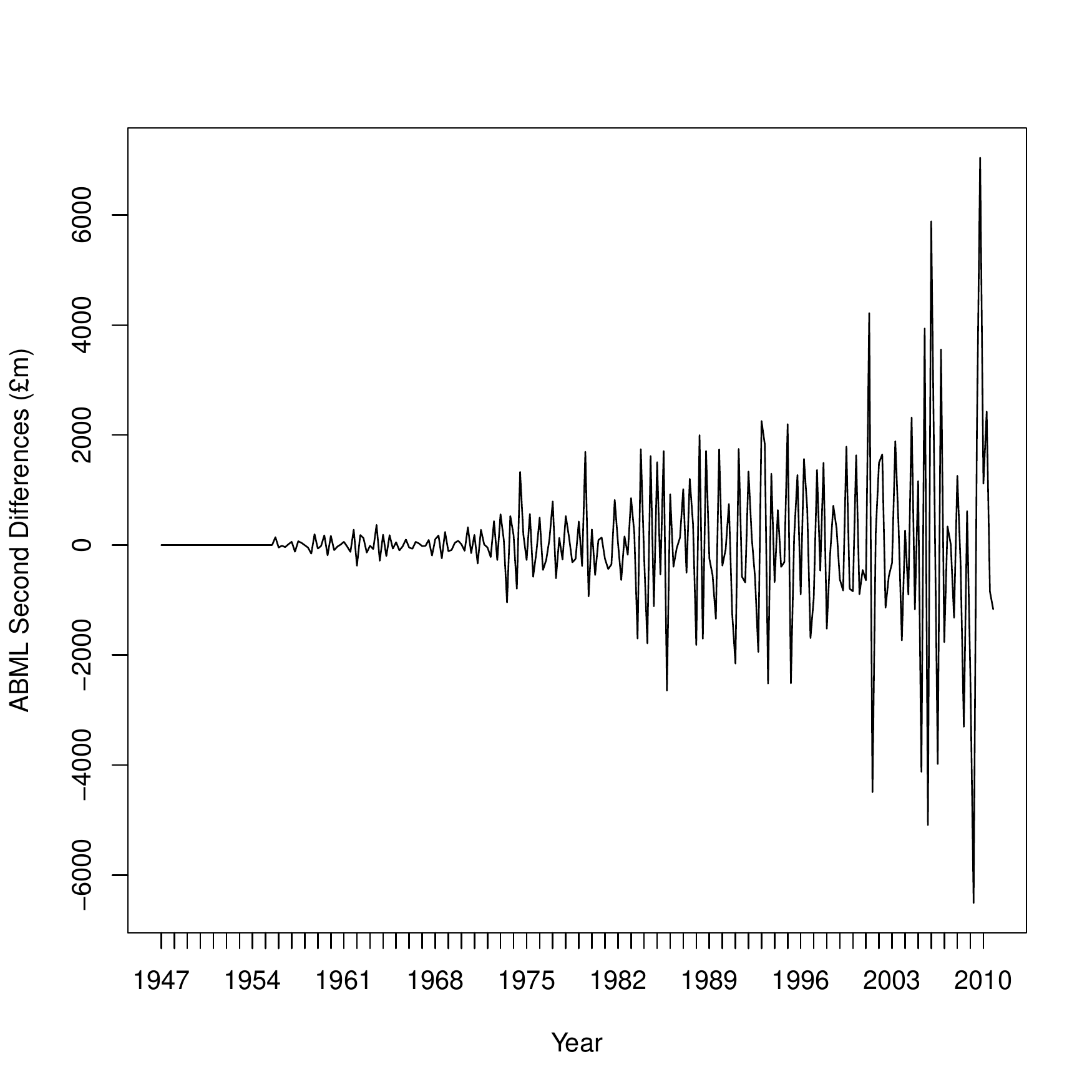}}
\caption{Left: ABML time series. Right: second-order differences of ABML.}\label{fig:abml}
\end{figure}
As with many economic time series, ABML exhibits a clear trend, which we removed using second-order differences; these are shown in Figure~\ref{fig:abml}.
 Naturally, other methods for removing the trend could be tried.
The second-order  differences strongly suggest that the series is not second-order stationary, with the
series variance increasing markedly over time.
Use of methods from~\cite{nason13} show that
 the autocorrelation also changes over time. In particular, the lag one autocorrelation undergoes a major and rapid shift around 1991.

Much of the increase in variance observed in Figure~\ref{fig:abml} is probably due to inflation. However, we have also analysed two different inflation-corrected versions of ABML, one provided by the U.K. Office of National Statistics, and both of these are also not second-order stationary, as determined by tests of
stationarity in \cite{PriestleySubbaRao1969} and \cite{nason13}.
%
\begin{figure}[ht]
\centering
\resizebox{0.9\textwidth}{!}{\includegraphics{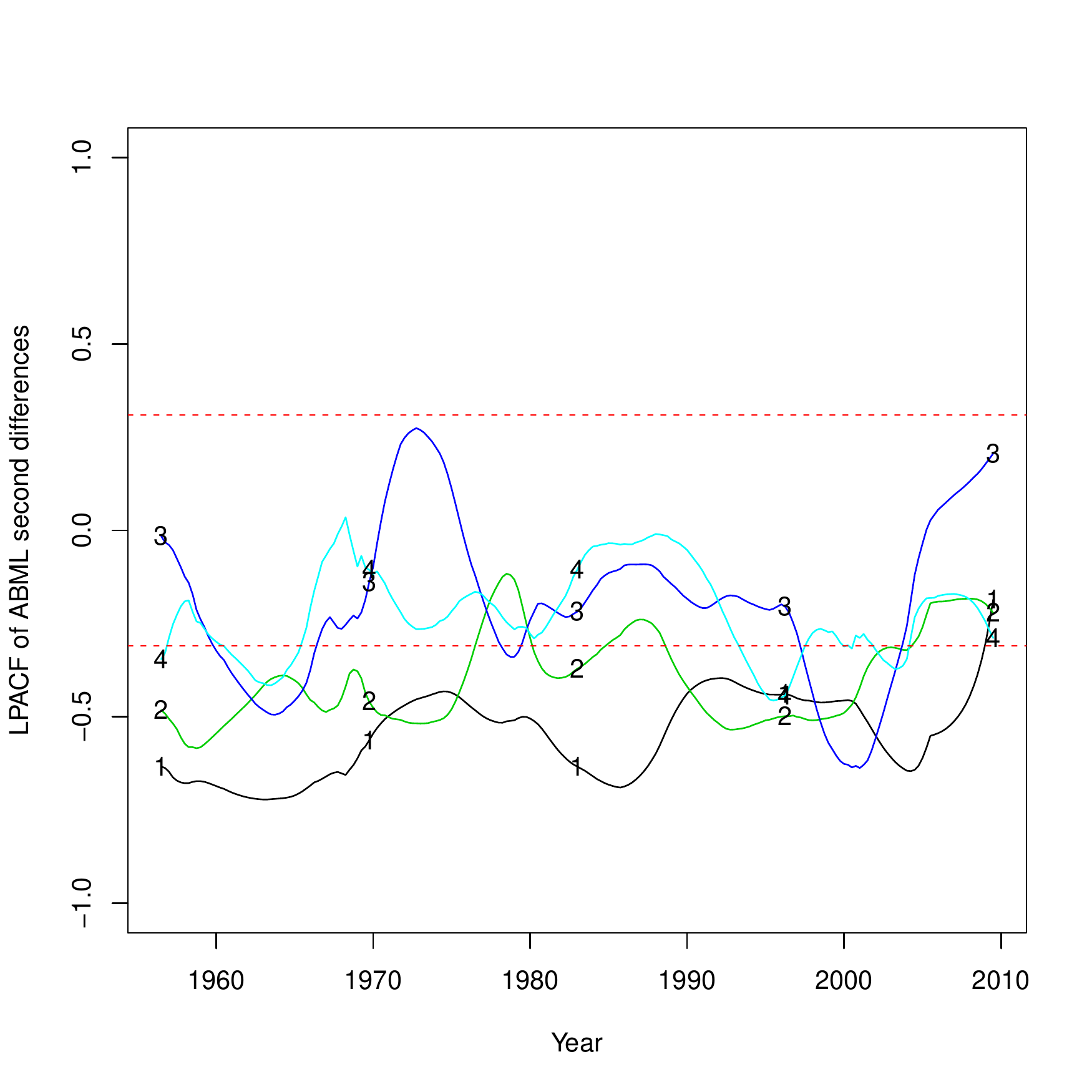}}
\caption{Windowed partial autocorrelation, $\tilde{q}_W(z, \tau)$, of ABML second difference series
	for lags one to four indicated on each curve
	(Epanechnikov window with $L=40$).
	Horizontal red dotted lines are approximate
	95\% confidence intervals.
	\label{fig:abmld2lpacf}}
\end{figure}

Our new estimation methodology enables us to obtain the
windowed local partial autocorrelation estimator, $\tilde{q}_W (z, \tau)$, shown in Figure~\ref{fig:abmld2lpacf} and computed on the ABML second-order  differences.
\bcb Note that crucially the local partial autocorrelation estimates within each lag ($\tau$) are time-dependent ($z$), and here these estimates suggest significant dependencies up to lag $\tau=4$. \ecb There are times, such as the 1970s, when the higher-order partial autocorrelations are not outside of the approximate significant bands, indicating that a \bcb lower lag, $2$, \ecb  might be appropriate. These results are (i) economically interesting as the local variance, autocorrelation and partial autocorrelation all  change over time, (ii) highlight the concerns with having no access to second-order conditional information (as was the case until now) and (iii) further pose the challenge of accurately forecasting such data. Although the topic of time series forecasting is outside the scope of this paper, many authors acknowledge the superiority of wavelet-based forecasting \citep{aminghafari07:forecasting,schlueter10:using} and we envisage the proposed local partial autocorrelation estimator could further improve results.

\subsection{Precipitation in Eastport, U.S.}
\label{sec:eastport}
Understanding precipitation patterns is important for detecting climate change indications and for policy decisions. The left panel in Figure~\ref{fig:eastport} shows monthly precipitation in millimetres from January 1887 until
December 1950 (768 observations) at a location in Eastport.  The data can
be found in \cite{hipel1994time} and have been analysed in many publications including \cite{rao2012nonstationarities,WRCR:WRCR21534}.
Our windowed local partial autocorrelation estimate of the Eastport data is
given in the right panel of Figure~\ref{fig:eastport} and shows clear
nonstationarity at lags one through three.
\begin{figure}
  \begin{center}
  {\includegraphics[width=0.45\textwidth]{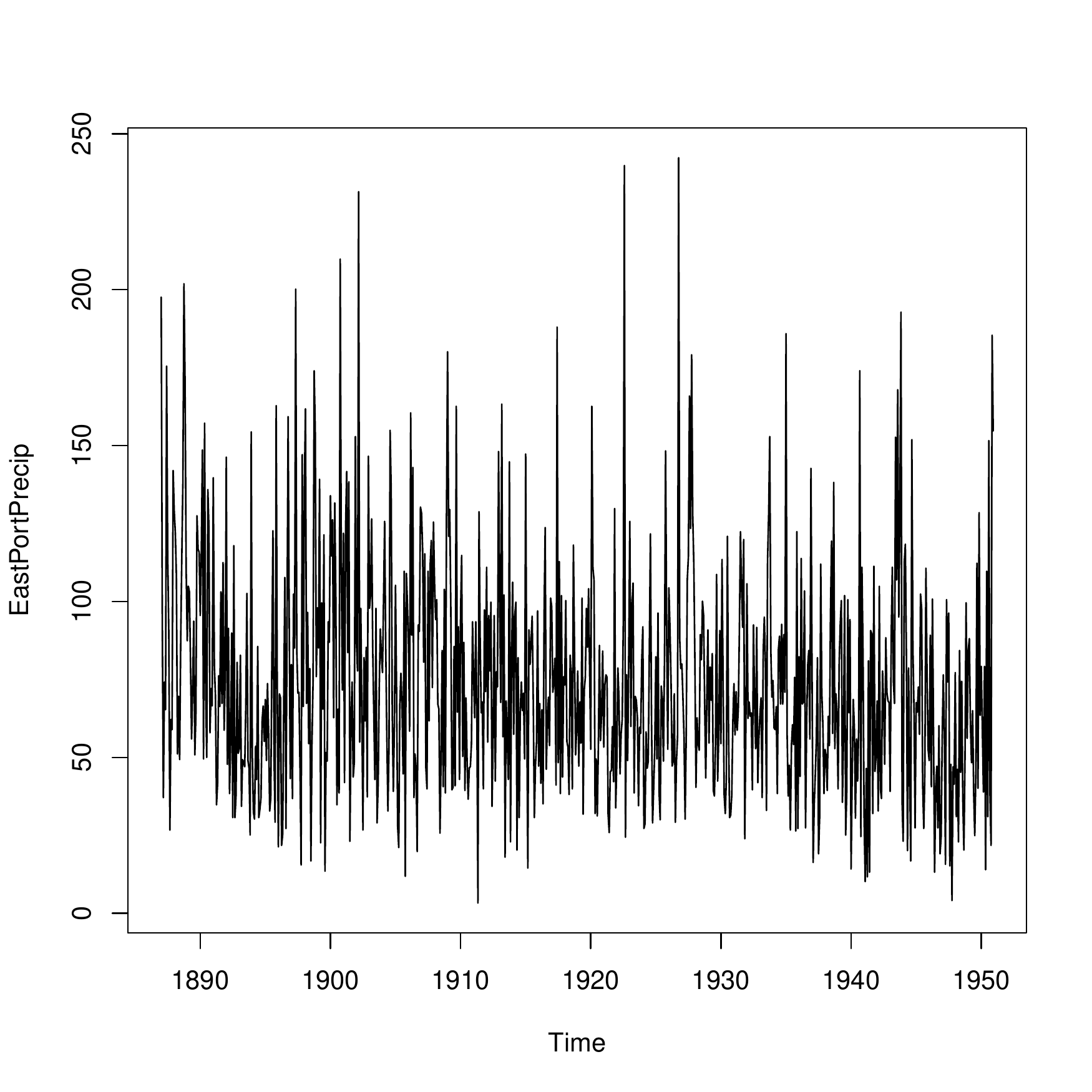}}
  \hfill
  {\includegraphics[width=0.45\textwidth]{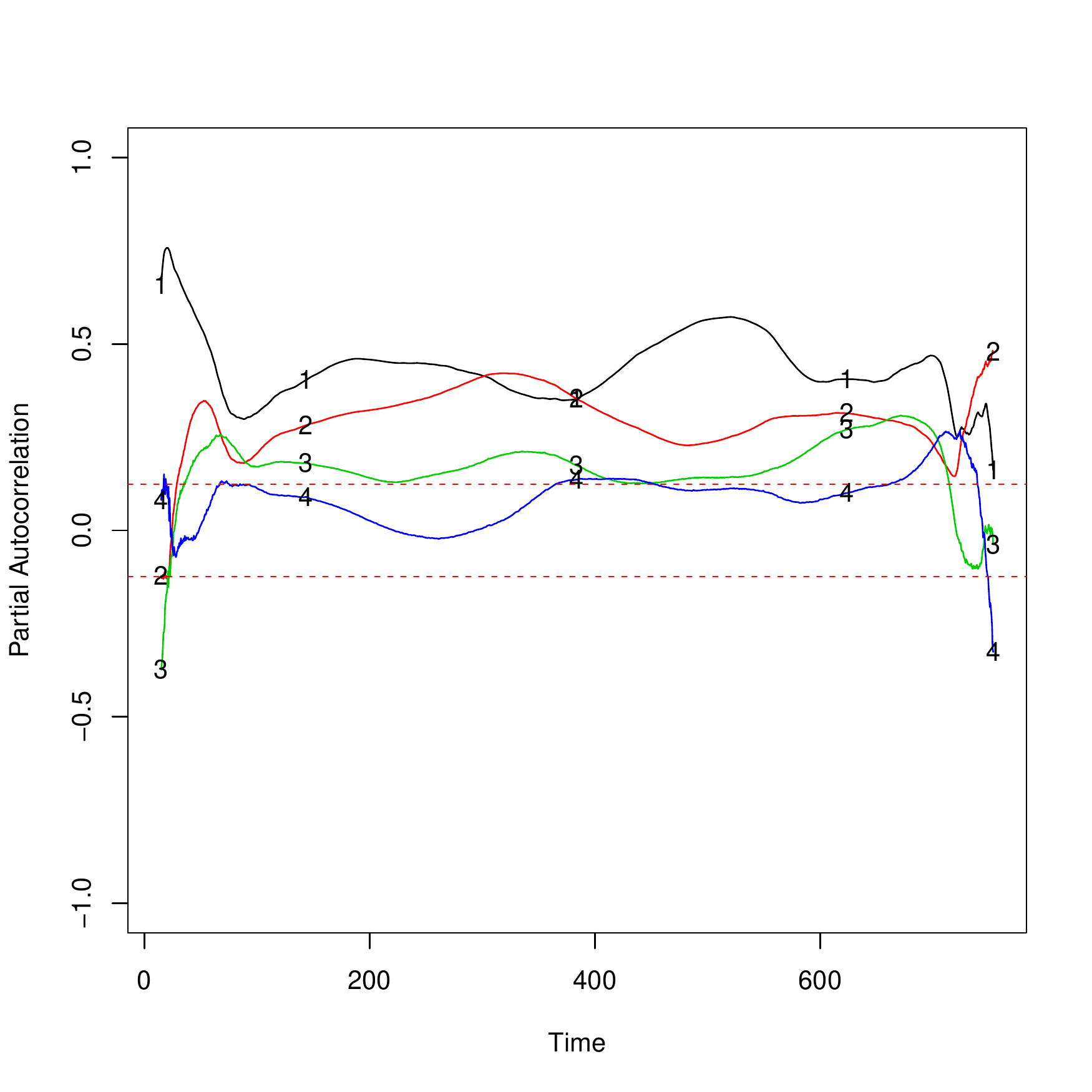}}
  \end{center}
  \caption{Left: Precipitation (mm) in Eastport, U.S. Right: Windowed partial autocorrelation,
$\tilde{q}_W(z, \tau)$, of left for lags one to four indicated on each curve
(Epanechnikov window with $L=250$, chosen by {\tt AutoBestBW} from {\tt locits} package.).
Horizontal red dotted
lines are approximate 95\% confidence intervals.
}\label{fig:eastport}
\end{figure}
Some authors analyse this series as if it were stationary and our analysis suggests that this is inappropriate.
Indeed, if one applies a formal hypothesis test of nonstationarity on appropriate lengths of the series, such as that proposed by \cite{CardinaliNason18},  there is strong evidence for nonstationarity. From a modelling point of view, the estimated local partial autocorrelation behaviour might support fitting a time-varying AR(3) model.

To provide some empirical support to the notion that window width is not visually critical to the interpretation of the
local partial autocorrelation, Appendix~\ref{sec:eastportwidths} shows the smoothed local partial autocorrelation
plots similar to that in the right-hand plot of Figure~\ref{fig:eastport}, but at three smaller window widths of 160, 80 and 40.
The plot at window width of 160 is not that different to the one above at $L=250$ and, indeed, the $L=80$ plot is not
that dissimilar. However, the $L=40$ plot almost certainly contains too much `noise' and should be disregarded.

\subsection{Euro-Dollar exchange rate}
Following the introduction of the Euro currency in 1999 several authors, including
\cite{AHAMADA2002177} and \cite{Garcin2017}, have considered different properties of this series,
which have an influence on setting monetary policy in various jurisdictions.

We analyze log returns of the monthly Euro-Dollar exchange rate as provided by EuroStat at
\\ \url{http://ec.europa.eu/eurostat/web/products-datasets/-/ei_mfrt_m} \\ from January 1999 until
October 2017.  The log returns and corresponding local partial autocorrelation function estimates
are given in Figure \ref{fig:eder}.  This demonstrates that the log returns do not appear to be
time varying (outside of the boundary locations) and exhibit only lag one partial autocorrelation.
This apparent stationarity is confirmed with formal tests using the \texttt{locits}
(\cite{R:locits}, \cite{nason13}) and \texttt{fractal} (\cite{R:fractal},
\cite{PriestleySubbaRao1969}) packages in \texttt{R}. Interestingly this relationship holds
throughout the financial crisis, from 2008 to 2011.

\bcb These examples highlight the versatility of our method and its potential use to identify stationary behaviour, manifest through local partial autocorrelation estimates that are constant through time. In addition, it highlights how the approach can identify departures from stationarity, evident through explicit time-dependent profiles at particular lags. \ecb
\begin{figure}
  \begin{center}
  {\includegraphics[width=0.45\textwidth]{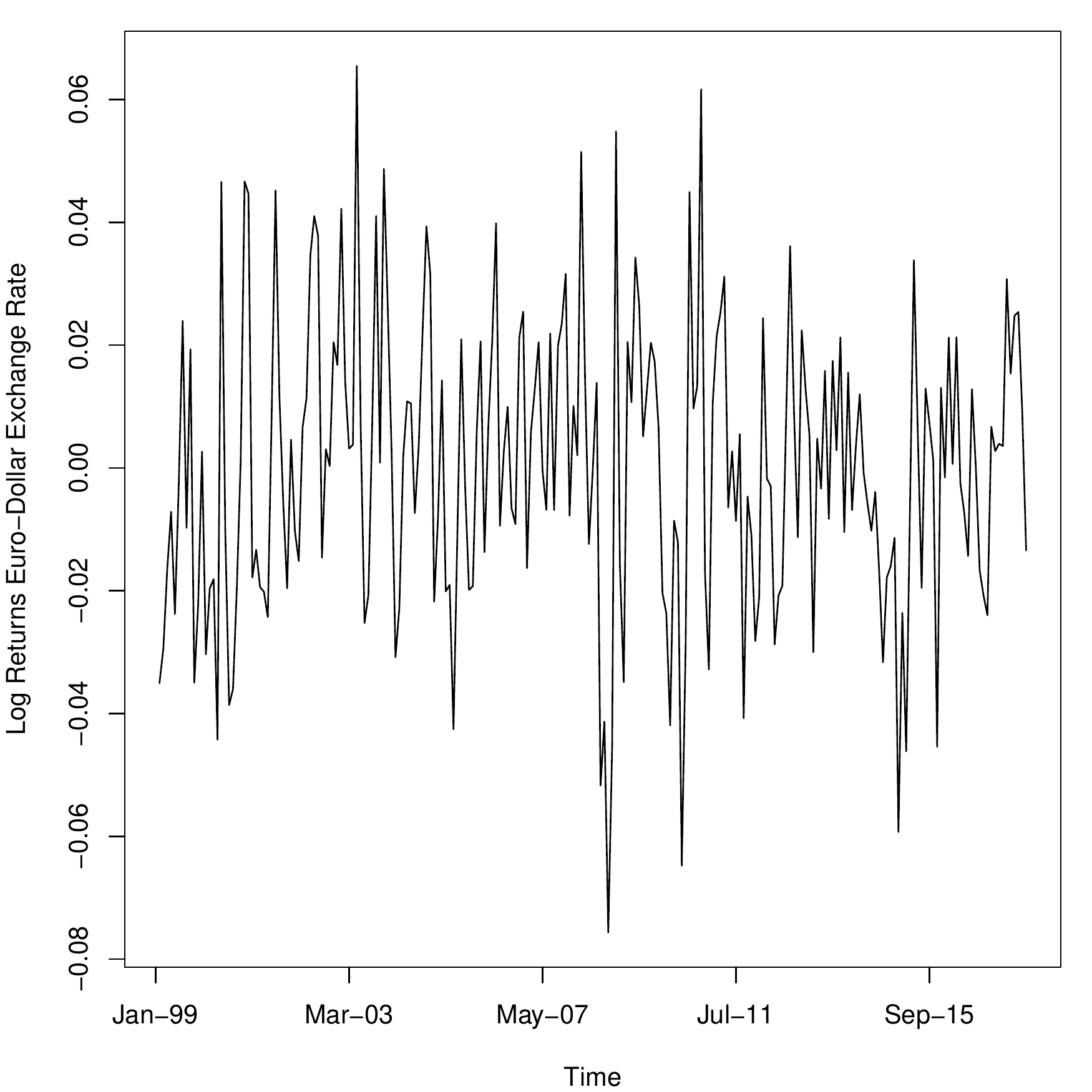}}
  \hfill
  {\includegraphics[width=0.45\textwidth]{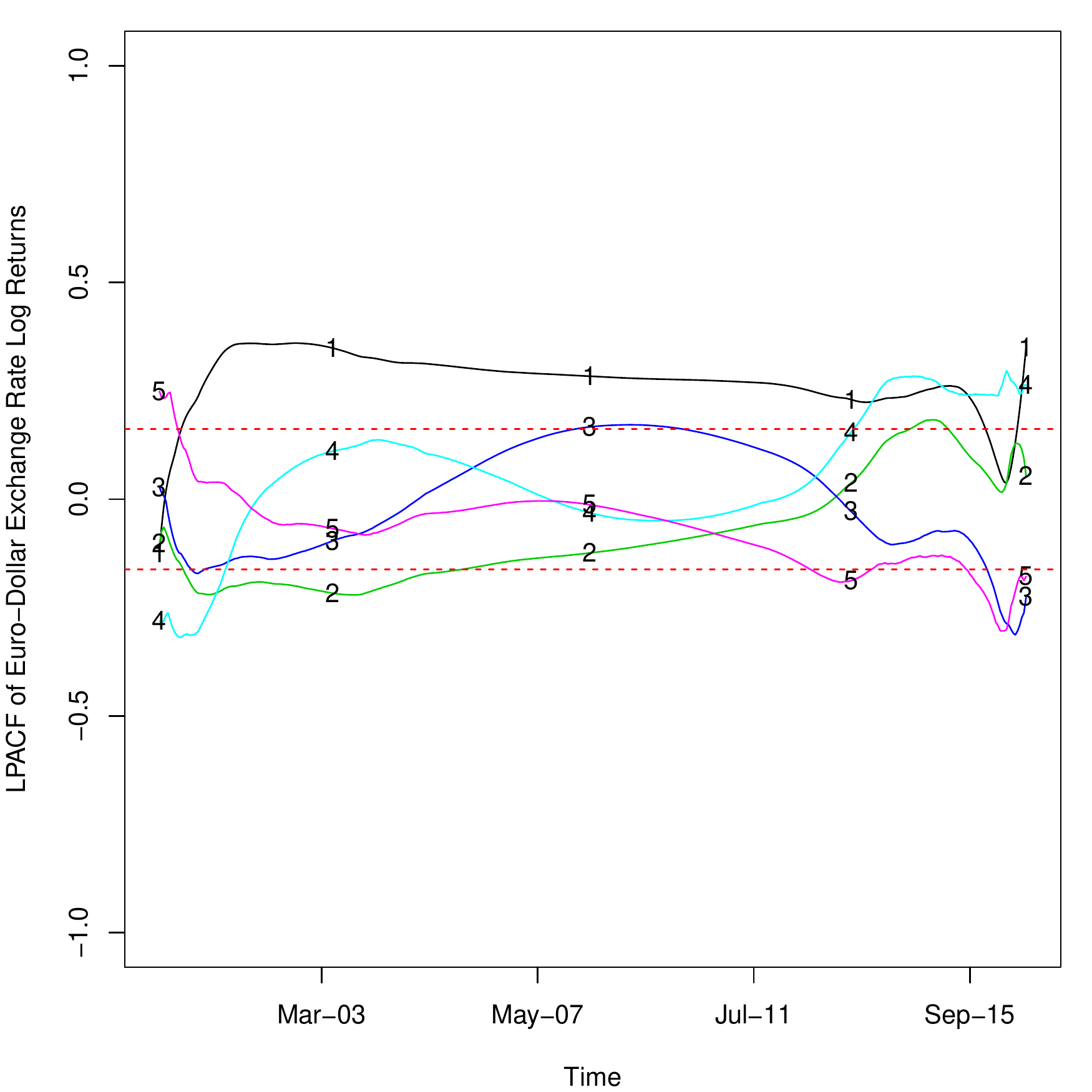}}
  \end{center}
    \caption{Left: Monthly Euro-Dollar log returns. Right: Windowed partial autocorrelation,
$\tilde{q}_W(z, \tau)$, of left for lags one to five indicated on each curve. Horizontal red dotted
lines are approximate 95\% confidence intervals. (Epanechnikov window with $L=147$)
}\label{fig:eder}
\end{figure}

%
%

\section{Discussion}
\label{sec:7}
This article develops  two new estimators of the local partial autocorrelation function and studied their theoretical properties when applied to a locally stationary wavelet process. We established consistency for the wavelet-based estimator and asymptotic distribution for the windowed estimator.  The latter result relied on new results on the integrated local wavelet periodogram, the (windowed) Haar cross-correlation wavelets and related quantities. \bcb For practical reasons, we promote the use of the windowed estimator. \ecb  We demonstrated the utility of these estimators for eliciting local \bcb second-order \ecb structure on simulated data, the U.S.\ Eastport precipitation time series and the U.K.\ ABML time series. We also demonstrated the versatility of our method in the (desirable) presence of stationarity for the Euro-Dollar exchange rates. \bcb On a practical note, should the practitioner believe that higher process powers also display a locally stationary behaviour, the proposed local partial autocorrelation function could then be used to additionally uncover higher-order dependency structures. \ecb
Most of the theoretical results relating to the generic local partial autocorrelation function estimator presented here are based on Haar wavelets, but many results and definitions also apply to other Daubechies' compactly supported wavelets.
The associated software package \texttt{lpacf} contains functionality to compute the estimators for all such wavelets, up to ten vanishing moments
as contained within the \texttt{wavethresh} package \citep{wavethresh}, as well as a cross-validation method for automatic bandwidth selection. The \texttt{lpacf} package
will be released on to the Comprehensive R Archive Network (CRAN) in due course.

\section*{Acknowledgements}
The authors were partially supported by the Research
Councils UK Energy Programme. The Energy Programme is an RCUK
cross-council initiative led by EPSRC and contributed to by ESRC, NERC,
BBSRC and STFC. GPN gratefully acknowledges support from EPSRC
grant K020951/1.

\bibliography{bibs}

\appendix

\section{R\'{e}sum\'{e}: partial autocorrelation for stationary  series}\label{sec:pacf}
Let $\{X_t\}_{t\in\ints}$ be a zero-mean second-order stationary process with autocovariance function
$\gamma(\tau)$. Loosely speaking,  the partial autocorrelation function at lag $\tau$ is
  the correlation between $X_1$ and $X_{\tau+1}$ whilst adjusting for the ``in-between''
observations, $X_2,\ldots, X_\tau$.
 \citet[p.~54]{brockwell91:time} define the closed span $\overline{\mbox{sp}} ( X_t, t \in H )$
of any subset $\{ X_t, t \in H\}$ of a Hilbert space ${\cal H}$ to be the smallest closed subspace
of ${\cal H}$ which contains each $X_t, t\in H$.
Then, following \citet[p.~98]{brockwell91:time}, the  lag $\tau$ partial autocorrelation function
$q(\tau)$ of $\{ X_t \}$ is defined by
		$q(\tau)=
\gpncor \left\{ X_{{\tau}+1}-P_{1,\tau} (X_{{\tau+1}}), X_{1}-P_{1,\tau}(X_{{1}}) \right\},$
where $P_{1,\tau}(\cdotp)$ denotes the projection operator onto
$\overline{\mbox{sp}}(X_{2}, \ldots, X_{\tau})$.
See also \citet[p. 43]{FanYao03}

Alternatively, if  $\gamma(0)>0$ and $\gamma(h)\rightarrow 0$ as $h\rightarrow \infty$, then the partial
autocorrelation function, $q(\tau)$, can be obtained as the final entry of the vector $\bphi_\tau$
which is the
solution to the well-known Yule-Walker equations $\Gamma_\tau\bphi_\tau = \bgamma_\tau$.
Here  $\Gamma_\tau=\{\gamma(i-j)\}_{i,j=1}^\tau$ is a $\tau \times \tau$ covariance matrix and
$\bgamma_\tau=\{ \gamma(i) \}_{i=1}^\tau$ is a vector of covariances. Equivalently,
$q(\tau)=\ph_{\tau,\tau}$ where $\ph_{\tau,\tau}$ is the coefficient of $X_1$ when projecting
$X_{\tau+1}$ on the space spanned by $X_1, \ldots, X_\tau$, i.e.\ the projection $\hat{X}_{\tau +
1} =\ph_{\tau,1}X_{\tau}+\ldots+\ph_{\tau,\tau}X_{1}$.

For a sampled series $\{X_{t}\}_{t=1}^{T}$, the sample partial autocorrelation at lag $\tau$ is often
estimated by solving $\hat{\Gamma}_\tau \hat{\bphi}_\tau = \hat{\bgamma}_\tau$, where $\hat{\gamma}$
are the usual sample autocovariances, and taking
$\hat{q}(\tau):=\hat{q}_{[1,T]}(\tau)=\hat{\ph}_{\tau,\tau}$. Here we use  the index notation
$[1,T]$ in order to indicate the range of observations on which the estimation of
$\hat{\Gamma}_\tau$ and $\hat{\bgamma}_\tau$ is based. The properties of $\hat{q}(\cdotp)$ are
well-known, see \citet[Section 8.10]{brockwell91:time}. In particular, $T^{1/2}\left\{
\hat{q}(\tau)-q(\tau) \right\}$ has a limiting Gaussian distribution, as $T \rightarrow \infty$,
with mean zero and variance proportional to the last term on the diagonal of $\Gamma_\tau^{-1}$.

\section{Definitions of locally stationary processes}
\label{sec:defls}
Definition~2.1 of \cite{dahlhaus97:fitting} is as follows.
\begin{quote}
``A sequence of stochastic processes $X_{t, T}$ $(t=0, \ldots, T-1)$ is called
locally stationary with transfer function $A^0$ and trend $\mu$ if there exists a representation
\begin{equation}
X_{t, T} = \mu ( t/ T) + \int_{-\pi}^\pi \exp (i \lambda t) A^0_{t,T} (\lambda) d\xi (\lambda),
\end{equation}
where the following holds.
\begin{enumerate}[(i)]
\item $\xi(\lambda)$ is a stochastic process on $[-\pi, \pi]$ with $\overline{\xi (\lambda)} = \xi(-\lambda)$ and
\begin{displaymath}
\cum \{ d\xi( \lambda_1), \ldots, d\xi(\lambda_k)\} = \eta\left( \sum_{j=1}^k \lambda_j \right)
	g_k (\lambda_1, \ldots, \lambda_{k-1}) d\lambda_1 \ldots d\lambda_k,
\end{displaymath}
where $\cum\{ \cdots \}$ denotes the cumulant of $k$th order, $g_1 =0$, $g_2(\lambda)=1$,
$| g_k (\lambda_1, \ldots, \lambda_{k-1})| \leq \mbox{const}_k$ for all $k$
and $\eta(\lambda) = \sum_{j=-\infty}^\infty \delta(\lambda + 2\pi j)$ is the periodic $2\pi$ extension
of the Dirac delta function.

\item There exists a constant $K$ and a \mbox{$2\pi$-periodic} function $A:[0,1] \times \R \rightarrow \C$
with $A(u, -\lambda) = \overline{A(u, \lambda)}$ and
\begin{equation}
\sup_{t, \lambda} \left| A^0_{t,T} (\lambda) - A \left( \frac{t}{T}, \lambda \right) \right| \leq K T^{-1}
\end{equation}
for all $T$; $A(u, \lambda)$ and $\mu(u)$ are assumed to be continuous in $u$.''
\end{enumerate}
\end{quote}

Definition~1 of \cite{nason00:wavelet}, including improvements from \citet{Fryzlewicz03}, is as follows.
\begin{quote}
``The locally stationary wavelet processes are a sequence of doubly indexed stochastic processes
$\{ X_{t,T} \}_{t=0, \ldots, T-1}$, $T = 2^J \geq 1$, having the representation in the mean-square sense
\begin{equation}
\label{eq:lswdef}
X_{t, T} = \sum_{j=1}^J \sum_k w^0_{j,k; T}  \psi_{j,k}(t) \xi_{j, k},
\end{equation}
where $\xi_{j,k}$ is a random orthonormal increment sequence and where $\{ \psi_{j, k} (t) \}_{j, k}$
is a discrete non-decimated family of wavelets based on a mother wavelet $\psi(t)$ of compact support.
The quantities in~\eqref{eq:lswdef} have the following properties:
\begin{enumerate}[(a)]
\item $\E (\xi_{j,k}) = 0$ for all $j, k$. Hence $E(X_{t, T}) = 0$ for all $t, T$,

\item $\cov{\xi_{j, k}, \xi_{\ell, m}} = \delta_{j, \ell} \delta_{k, m}$ where $\delta_{i,j}$ is the Kronecker delta.

\item There exists, for each $j \geq 1$ a Lipschitz continuous function $W_j(z)$ for $z \in (0,1)$ which fulfils
	the following properties:
	\begin{equation}
	\sum_{j=1}^\infty | W_j (z) |^2 < \infty \mbox{\ \ \ uniformly in $z\in (0,1)$};
	\end{equation}
	the Lipschitz constants $L_j$ are uniformly bounded in $j$ and
	\begin{equation}
	\sum_{j=1}^\infty 2^{j} L_j < \infty;
	\end{equation}
	there exists a sequence of constant $C_j$ such that for each $T$
	\begin{equation}
	\sup_{k} \left| w^0_{j, k;T} - W_j (k/T) \right| \leq C_j/ T,
	\end{equation}
	where $\{ C_j \}$ fulfils $\sum_{j=1}^\infty C_j < \infty$.''
\end{enumerate}
\end{quote}

\section{Miscellaneous covariance matrices}
\label{sec:covmat}
The $\tau \times \tau$ covariance matrices are given by
\beqann
\sigb&=&\left(\begin{array}{ccc}
\mbox{Cov}(X_{{[zT]},T},X_{{[zT]},T}) &\cdots& \mbox{Cov}(X_{{[zT]},T},X_{{[zT]+\tau-1},T})\\
\vdots &\cdots& \vdots\\
\mbox{Cov}(X_{{[zT]+\tau-1},T},X_{{[zT]},T}) &\cdots&
\mbox{Cov}(X_{{[zT]+\tau-1},T},X_{{[zT]+\tau-1},T})\end{array}\right)\\
 \mbox{ and}\\
\sigf&=&\left(\begin{array}{ccc}
\mbox{Cov}(X_{{[zT]+1},T},X_{{[zT]+1},T}) &\cdots& \mbox{Cov}(X_{{[zT]+1},T},X_{{[zT]+\tau},T})\\
\vdots &\cdots& \vdots\\
\mbox{Cov}(X_{{[zT]+\tau},T},X_{{[zT]+1},T}) &\cdots&
\mbox{Cov}(X_{{[zT]+\tau},T},X_{{[zT]+\tau},T})\end{array}\right).
\eeqann

$$
\sig=\left(\begin{array}{ccc}
\mbox{Cov}(X_{{[zT]+\tau-1},T},X_{{[zT]+\tau-1},T}) &\cdots& \mbox{Cov}(X_{{[zT]},T},X_{{[zT]+\tau-1},T})\\
\vdots &\cdots& \vdots\\
\mbox{Cov}(X_{{[zT]+\tau-1},T},X_{{[zT]},T}) &\cdots&
\mbox{Cov}(X_{{[zT]},T},X_{{[zT]},T})\end{array}\right)
$$

\section{Cross-scale autocorrelation Haar wavelets}
\label{proof:psilj}

First note that by substituting $s = [zT] -t$ in~\eqref{eq:iNz} and by denoting the rectangular kernel by $h$, we obtain:
\begin{equation}\label{eq:iNzalt}
i_{N,z} (j, \ell, k) = \sum_{s=[zT]-N+1}^{[zT]} \psi_{j, s} \psi_{\ell, s+k - 2[zT]+N/2-1}
h\left(\frac{[zT]-s}{N}\right).
\end{equation}
The above expression is by no means restricted to $h$ being a rectangular kernel, and other kernels may be used, as explained in the article main text.

This new formulation of $i_{N, z}$ is very similar to that of the
cross-correlation wavelet $\Psi_{j, \ell}$, except that the summation limits are $[zT-N+1]$ and
$[zT]$ instead of $-\infty$ and $\infty$. This similarity is true for all Daubechies' compactly
supported wavelets. We shall use this similarity to bound $i_{N, z}$ using Lemma~\ref{lem:boundi}.

\begin{proposition}
\label{prop:psilj}
The cross-scale autocorrelation Haar wavelets $\Psi_{j, \ell} (\cdotp)$ at scales $\ell < j$ are given by
\begin{equation}
\Psi_{j, \ell} (\tau) = 2^{-(j-\ell)/2} \begin{cases}
0 & \mbox{for } \tau < -2^{\ell},\\
-(2^{-\ell}\tau + 1) & \mbox{for } -2^{\ell} \leq \tau <  -2^{\ell-1},\\
2^{-\ell} \tau & \mbox{for } -2^{\ell-1} \leq \tau < 0,\\
0 & \mbox{for } 0 \leq \tau < 2^{j-1} - 2^\ell,\\
2^{-\ell} (2\tau - 2^j + 2^{\ell+1}) & \mbox{for } 2^{j-1} - 2^\ell \leq \tau < 2^{j-1} -
2^{\ell-1},\\
2^{-\ell} (2^j - 2\tau) & \mbox{for } 2^{j-1} - 2^{\ell-1} \leq \tau < 2^{j-1},\\
0 & \mbox{for } 2^{j-1} \leq \tau < 2^j - 2^{\ell},\\
2^{-\ell} (2^j - \tau - 2^\ell) & \mbox{for } 2^j - 2^\ell \leq \tau < 2^j - 2^{\ell-1},\\
2^{-\ell}(\tau - 2^j) & \mbox{for } 2^j - 2^{\ell - 1} \leq \tau < 2^j,\\
0 & \mbox{for } 2^j \leq \tau.
\end{cases}
\label{eq:HaarXCorr}
\end{equation}
For $\ell > j$ we have
 $\Psi_{j, \ell}(\tau) = \Psi_{\ell, j} (-\tau)$  and a precise formula appears
in equation~\eqref{eq:Psijlessell} (Appendix~\ref{proof:psilj}).
Note $\Psi_{j, j} (\tau) = \Psi_j(\tau)$, the regular autocorrelation wavelet.
\end{proposition}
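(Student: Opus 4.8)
The plan is to evaluate the defining sum $\Psi_{j,\ell}(\tau)=\sum_{k\in\ints}\psi_{j,k}\,\psi_{\ell,k+\tau}$ directly, using the elementary observation that a discrete Haar wavelet is a signed difference of two constant blocks. Writing each wavelet as $\psi_{j,\cdot}=2^{-j/2}\bigl(\mathbf{1}_{I^{j}_{0}}-\mathbf{1}_{I^{j}_{1}}\bigr)$, where $I^{j}_{0},I^{j}_{1}$ are the indicators of its two constant halves (each of length $2^{j-1}$), and similarly at scale $\ell$, I would expand the product to obtain
\[
\Psi_{j,\ell}(\tau)=2^{-(j+\ell)/2}\sum_{a,b\in\{0,1\}}(-1)^{a+b}\,T_{a,b}(\tau),\qquad T_{a,b}(\tau)=\sum_{k\in\ints}\mathbf{1}_{I^{j}_{a}}(k)\,\mathbf{1}_{I^{\ell}_{b}}(k+\tau).
\]
Each $T_{a,b}$ is the discrete cross-correlation of two indicator boxes and is therefore a trapezoidal, piecewise-linear function of $\tau$, with unit-slope rising and falling edges, a plateau of height $\min(2^{j-1},2^{\ell-1})=2^{\ell-1}$ (using $\ell<j$), and total support of length $2^{j-1}+2^{\ell-1}$. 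The whole problem thus reduces to superposing four explicitly located trapezoids with signs $(-1)^{a+b}$.

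The combinatorial heart of the argument is geometric. Since $\Psi_{j,\ell}$ is piecewise linear, its slope can only change at values of $\tau$ where a jump of the translated scale-$\ell$ wavelet meets a jump of the scale-$j$ wavelet; the two wavelets have jumps at their two endpoints and at their midpoints, so the candidate breakpoints are exactly the nine offsets $q-p$ with $q\in\{0,2^{j-1},2^{j}\}$ and $p\in\{0,2^{\ell-1},2^{\ell}\}$. These are precisely the breakpoints appearing in \eqref{eq:HaarXCorr}. On the two interior ``gap'' intervals the translated narrow wavelet lies entirely within a single constant block of the wide wavelet, so its two halves cancel and $\Psi_{j,\ell}\equiv0$ there; on the remaining intervals one reads off the affine value from the signed superposition of the four trapezoids. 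The common amplitude $2^{-j/2}2^{-\ell/2}=2^{-(j+\ell)/2}$ together with the unit overlap-count increments produces slopes of magnitude $2^{-(j+\ell)/2}$, which after factoring out $2^{-(j-\ell)/2}$ leaves the $2^{-\ell}\tau$-type coefficients recorded in the statement; Figure~\ref{fig:omegai} organises exactly this bookkeeping.

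I expect the main obstacle to be precisely the sign-and-offset bookkeeping across the ten resulting regions: each region inherits contributions from a different subset of the four trapezoids $T_{a,b}$, and it is easy to drop a sign or misplace an endpoint. I would control this by imposing the two consistency checks that $\Psi_{j,\ell}$ be continuous at each of the nine breakpoints and vanish outside $[-2^{\ell},2^{j}]$, which together pin down every coefficient. The remaining cases need no further computation: the case $\ell>j$ follows from the reflection identity $\Psi_{j,\ell}(\tau)=\Psi_{\ell,j}(-\tau)$, obtained by the reindexing $k\mapsto k-\tau$ in the defining sum, giving the explicit formula \eqref{eq:Psijlessell}; and the diagonal $\Psi_{j,j}=\Psi_{j}$ holds by definition of the autocorrelation wavelet.
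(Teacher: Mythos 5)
Your argument is correct in substance, but it takes a genuinely different route from the paper's. The paper never manipulates the discrete sum directly: it first observes that for Haar wavelets the discrete cross-correlation coincides with a continuous-time integral, $\int \psi_{j,0}(y)\,\psi_{\ell,0}(y-\tau)\,dy$, then uses the substitution $x = 2^{-\ell}y$ to prove the structural identity $\Psi_{j,\ell}(\tau) = \Omega_{j-\ell}(2^{-\ell}\tau)$, where $\Omega_i(u) = \int \psi_i(x)\psi(x-u)\,dx$ is a one-parameter ``core'' function. That core function is then computed in Lemma~\ref{lem:omegai} by splitting the integral into two moving-window pieces, analysing each on five subintervals, and intersecting the two case analyses into the ten regions catalogued in \eqref{eq:regionsIII}. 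Your proof instead stays entirely discrete: each Haar wavelet is a signed difference of two block indicators, so $\Psi_{j,\ell}$ is a signed superposition of four box cross-correlations (trapezoids), and the formula follows from breakpoint bookkeeping plus continuity and support constraints. Both computations are elementary and both lead to the same ten-region answer. Yours avoids the (easy but necessary) step of checking that the continuous integral reproduces the discrete sum at integer lags, and it makes the breakpoint set $\{q-p\}$ transparent; the paper's rescaling buys the structural insight that $\Psi_{j,\ell}$ depends on $(j,\ell)$ only through the scale difference $j-\ell$ once the lag is rescaled by $2^{-\ell}$, and it produces the core function $\Omega_i$, whose region decomposition and picture (Figure~\ref{fig:omegai}) are reused later, e.g.\ in Corollary~\ref{corr:psiljREMAIN}.

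One wrinkle you must resolve before the bookkeeping will close: the lag-sign convention. You start from the definition $\Psi_{j,\ell}(\tau) = \sum_{k\in\ints} \psi_{j,k}\psi_{\ell,k+\tau}$ quoted in the main text, but the formula \eqref{eq:HaarXCorr} is exact for the opposite convention $\sum_{k}\psi_{j,k}\psi_{\ell,k-\tau}$, which is what the paper's continuous integral $\int\psi_{j,0}(y)\psi_{\ell,0}(y-\tau)\,dy$ actually computes (the paper is internally inconsistent on this point). A check with $j=2$, $\ell=1$ shows the sum under the $k+\tau$ convention is supported in $(-2^{j},2^{\ell})$, not $[-2^{\ell},2^{j})$, so under your convention you would derive the reflection of \eqref{eq:HaarXCorr}; indeed, with your definition of $T_{a,b}$ the slope changes occur at $\tau = p-q$, not $q-p$ as you assert, and your own requirement that the answer vanish outside $[-2^{\ell},2^{j}]$ would fail and flag the mismatch. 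This is not a flaw in your method and is trivially repaired: either adopt the $k-\tau$ convention from the outset, or prove the formula for $\Psi_{j,\ell}(-\tau)$ and record the reflection explicitly.
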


\begin{proof}
See Section~\ref{suppmat:proof:psilj}.
\end{proof}

\begin{corollary}
\label{corr:psiljREMAIN}
The regions on the right-hand side of~\eqref{eq:HaarXCorr}
correspond exactly to the regions $III_a$ to $III_j$ in
\eqref{eq:regionsIII}.
For completeness (and usefulness in working out derived quantities) we
can write down $\Psi_{j, \ell} (\tau)$ for $\ell > j$ explicitly as
\begin{equation}
\Psi_{j, \ell} (\tau) = 2^{-(\ell - j)/2} \begin{cases}
0 & \mbox{for } \tau \leq -2^{\ell},\\
-2^{-j} (\tau + 2^\ell) & \mbox{for } -2^\ell < \tau \leq -2^\ell + 2^{j-1},\\
2^{-j} (2^\ell + \tau - 2^j) & \mbox{for } -2^\ell + 2^{j-1} < \tau \leq -2^\ell + 2^j,\\
0 & \mbox{for } -2^\ell + 2^j < \tau \leq -2^{\ell - 1},\\
2^{-j} (2^\ell + 2\tau) & \mbox{for } -2^{l-1}  < \tau \leq -2^{\ell-1} + 2^{j-1},\\
2^{-j} (2^{j+1} - 2^\ell - 2\tau) & \mbox{for } -2^{\ell-1} + 2^{j-1} < \tau \leq -2^{\ell-1} +
2^j,\\
0 & \mbox{for } -2^{\ell-1} + 2^j < \tau \leq 0,\\
-2^{-j} \tau & \mbox{for } 0 < \tau \leq 2^{j-1},\\
-(1 - 2^{-j} \tau) & \mbox{for } 2^{j-1} < \tau \leq 2^j.
\end{cases}
\label{eq:Psijlessell}
\end{equation}
\end{corollary}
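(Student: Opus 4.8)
The plan is to treat the corollary's two assertions separately, as each rests on Proposition~\ref{prop:psilj} and amounts to careful bookkeeping rather than new analysis. For the first assertion, I would return to the proof of Proposition~\ref{prop:psilj}, in which the overlap of the supports of the Haar wavelets $\psi_{j,\cdot}$ and $\psi_{\ell,\cdot}$ is partitioned into the regions $III_a,\ldots,III_j$ recorded in \eqref{eq:regionsIII}. The ten cases of \eqref{eq:HaarXCorr} are delimited by the breakpoints $-2^\ell,\,-2^{\ell-1},\,0,\,2^{j-1}-2^\ell,\,2^{j-1}-2^{\ell-1},\,2^{j-1},\,2^j-2^\ell,\,2^j-2^{\ell-1},\,2^j$, and I would verify interval by interval that these endpoints coincide with the endpoints of $III_a$ through $III_j$. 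Since both lists are ordered and of matching length, this reduces to checking that the breakpoints line up and that the half-open ranges tile $\reals$ without gap or overlap; no computation beyond this alignment is required.

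For the second assertion I would invoke the reflection identity $\Psi_{j,\ell}(\tau)=\Psi_{\ell,j}(-\tau)$ from Proposition~\ref{prop:psilj}. Because $\ell>j$ here, the wavelet $\Psi_{\ell,j}$ has its first index larger than its second and is therefore covered by the already-established formula \eqref{eq:HaarXCorr} after the relabelling $(j,\ell)\mapsto(\ell,j)$. Concretely, I would substitute $\tau\mapsto-\tau$, $j\mapsto\ell$, $\ell\mapsto j$ into \eqref{eq:HaarXCorr}: the prefactor becomes $2^{-(\ell-j)/2}$ as claimed, each defining inequality flips (so the half-open ranges reverse orientation, consistent with the endpoint conventions in \eqref{eq:Psijlessell}), and each linear value changes sign in $\tau$. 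After simplifying the ten branches and re-sorting them by increasing $\tau$, I expect to recover \eqref{eq:Psijlessell} line for line; for instance, the central vanishing region $0\le\tau<2^{\ell-1}-2^j$ of the relabelled \eqref{eq:HaarXCorr} maps to $-2^{\ell-1}+2^j<\tau\le0$, and the first branch ($\tau<-2^j$, value $0$) maps to the tail $\tau>2^j$, which is why \eqref{eq:Psijlessell} needs to list only nine cases explicitly.

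The only genuine obstacle is arithmetic discipline: the substitution $\tau\mapsto-\tau$ reverses all ten inequalities and negates every linear term, so the branches must be simplified and then re-ordered without a transcription slip. I would guard against errors by checking that $\Psi_{j,\ell}$ agrees from the left and right at each breakpoint $-2^\ell,\,-2^\ell+2^{j-1},\ldots,2^j$, using the fact that the Haar cross-correlation wavelet is continuous and compactly supported, and by confirming the function vanishes beyond the support endpoints $-2^\ell$ and $2^j$. These continuity checks at the shared breakpoints, together with the term-by-term match produced by the reflection, complete the derivation of \eqref{eq:Psijlessell} and hence establish the corollary.
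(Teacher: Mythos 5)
Your proposal is correct and follows exactly the route the paper intends (the paper leaves this corollary unproved, as an immediate consequence of Proposition~\ref{prop:psilj}): the region correspondence comes from $\Psi_{j,\ell}(\tau)=\Omega_{j-\ell}(2^{-\ell}\tau)$ in~\eqref{eq:psiinomega}, which maps each region of~\eqref{eq:regionsIII} in $u$ to the corresponding region of~\eqref{eq:HaarXCorr} under $u=2^{-\ell}\tau$, $i=j-\ell$; and the $\ell>j$ formula comes from the reflection $\Psi_{j,\ell}(\tau)=\Psi_{\ell,j}(-\tau)$ applied to the relabelled~\eqref{eq:HaarXCorr}, exactly as you carry out. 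The only point to make explicit is the $2^{\ell}$ rescaling when matching breakpoints (the $III$ regions are stated in the $u$ variable), which your appeal to the proof of Proposition~\ref{prop:psilj} supplies.
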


\section{Subsidiary result used in the proof of Lemma~\ref{lem:boundi}}
\label{app:intres}

\begin{lemma}\label{lem:intres}
For $a, b$ such that $2a, 2b \in \nats$ and $a, b > 0$ it is the case that
\begin{equation}
\label{eq:I1}
 \int_{-\pi}^\pi \frac{ \{ 1 - \cos(2 a \omega) \} \{ 1 - \cos(2 b \omega) \}}{1 - \cos(\omega)}
\, d\omega = 4\pi \min(a,b).
\end{equation}
\end{lemma}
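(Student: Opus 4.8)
The plan is to reduce the integrand to a trigonometric polynomial whose constant Fourier coefficient can be read off directly, using the Fejér kernel. First I would apply the half-angle identity $1-\cos(m\omega)=2\sin^2(m\omega/2)$ to each of the three factors, which gives
\[
\frac{\{1-\cos(2a\omega)\}\{1-\cos(2b\omega)\}}{1-\cos\omega}
=\frac{2\sin^2(a\omega)\sin^2(b\omega)}{\sin^2(\omega/2)}.
\]
Writing $m=2a\in\nats$, the quotient $\sin^2(a\omega)/\sin^2(\omega/2)=\{\sin(m\omega/2)/\sin(\omega/2)\}^2$ is exactly the (unnormalised) Fejér kernel, so I would invoke its standard Fourier expansion
\[
\frac{\sin^2(a\omega)}{\sin^2(\omega/2)}=\sum_{|k|<2a}(2a-|k|)\,e^{ik\omega},
\]
valid whenever $2a$ is a positive integer, so that the half-integer values of $a$ permitted by the hypothesis are also covered.

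Next I would treat the remaining factor via $2\sin^2(b\omega)=1-\cos(2b\omega)=1-\tfrac12 e^{i2b\omega}-\tfrac12 e^{-i2b\omega}$, again a trigonometric polynomial with integer frequencies since $2b\in\nats$. Substituting both expansions and integrating term by term using the orthogonality relation $\int_{-\pi}^\pi e^{ik\omega}\,d\omega=2\pi\,\delta_{k,0}$, only three frequencies can contribute: $k=0$, from pairing the two constant terms, and $k=\pm 2b$, from pairing $e^{\mp i2b\omega}$ against the matching exponential in the Fejér expansion. The $k=0$ term contributes $2\pi\cdot 2a$, while each of the two cross terms contributes $-\tfrac12(2a-2b)\cdot 2\pi$, but only provided the frequency $\pm 2b$ actually lies in the support $|k|<2a$ of the Fejér expansion.

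The decisive step is precisely this support condition, and it is what produces the minimum. If $b<a$ then $2b<2a$, both cross terms are present, and the integral equals $2\pi\{2a-(2a-2b)\}=4\pi b$. If $b\ge a$ then $2b\ge 2a$ falls outside the range $|k|<2a$, the cross terms vanish, and the integral equals $2\pi\cdot 2a=4\pi a$. In either case the value is $4\pi\min(a,b)$, as claimed, and the boundary case $a=b$ is consistent with both expressions. I expect no genuine obstacle beyond bookkeeping: the one point that requires care is the strictness of the inequality $|k|<2a$ in the Fejér expansion, since it decides whether the $k=\pm 2b$ contributions appear and hence selects $\min(a,b)$ rather than $\max(a,b)$. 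A symmetry check, interchanging the roles of $a$ and $b$ (treating $b$'s factor as the Fejér kernel instead), gives the same symmetric answer and provides a useful consistency test.
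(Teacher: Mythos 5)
Your proof is correct, and while it hinges on the same central object as the paper's proof --- the Fej\'{e}r kernel --- it uses a genuinely different decomposition of it. The paper normalizes the $b$-factor into the Fej\'{e}r kernel $F_{2b}$, expands it as an average of Dirichlet kernels, $F_{2b}(\omega) = (2b)^{-1}\sum_{k=0}^{2b-1} D_k(\omega)$, and then evaluates $\int_{-\pi}^{\pi} D_k(\omega)\cos(2a\omega)\,d\omega$ by explicit antiderivative computations, obtaining an indicator that equals $1$ precisely when $k \geq 2a$; summing these indicators over $k$ yields the minimum. You instead invoke the closed-form Fourier expansion of the unnormalized Fej\'{e}r kernel, $\sin^2(a\omega)/\sin^2(\omega/2) = \sum_{|k|<2a}(2a-|k|)e^{ik\omega}$, write $1-\cos(2b\omega)$ as a three-term trigonometric polynomial, and read off the constant coefficient by orthogonality. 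Your route avoids both the Dirichlet decomposition and the term-by-term integration, and it makes the origin of the minimum completely transparent: everything reduces to whether the frequencies $\pm 2b$ lie inside the band $|k|<2a$, whereas the paper encodes the same dichotomy in the step behaviour of its integrals $I2_k$. You also correctly flagged the two points where care is needed --- the strict inequality $|k|<2a$ (which makes the boundary case $a=b$ land consistently in either branch) and the hypothesis $2a, 2b \in {\mathbb N}$ (which is exactly what makes both of your expansions trigonometric polynomials with integer frequencies) --- and your symmetry check by swapping the roles of $a$ and $b$ parallels the symmetry argument the paper uses to dispatch the case $b \leq a$.
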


\begin{proof}
See Section~\ref{proof:lem:intres}.
\end{proof}

\section{Additional Results Required For the Proofs from Section \ref{sec:3}}\label{app:add}
\begin{lemma}
\label{lem:omegai}
The core function, $\Omega_i(u)$ for Haar wavelets is given by
\begin{equation}
\label{eq:omegaihaar}
\Omega_i (u) = 2^{-i/2} \begin{cases}
0 & \mbox{for } u < -1,\\
-(u+1) & \mbox{for } -1 \leq u < -\frac{1}{2},\\
u & \mbox{for } -\frac{1}{2} \leq u < 0,\\
0 & \mbox{for } 0 \leq u < 2^{i-1} - 1,\\
2u - 2^i + 2 & \mbox{for } 2^{i-1} - 1 \leq u < 2^{i-1} - \frac{1}{2},\\
2^i - 2u & \mbox{for } 2^{i-1} - \frac{1}{2} \leq u < 2^{i-1},\\
0 & \mbox{for } 2^{i-1} \leq u < 2^i - 1,\\
2^i - u  -1 & \mbox{for } 2^i - 1 \leq u < 2^i - \frac{1}{2},\\
u - 2^i & \mbox{for } 2^i - \frac{1}{2} \leq u < 2^i,\\
0 & \mbox{for } 2^i \leq u,
\end{cases}
\end{equation}
for $u \in \reals$ and $i \in \nats \cup \{0\}$.
\end{lemma}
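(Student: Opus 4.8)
The plan is to compute $\Omega_i$ directly from its definition as the continuous core cross-correlation of the scale-$i$ Haar wavelet with the Haar mother wavelet, exploiting that both are signed indicator functions. Writing the mother wavelet as $\psi = \mathbf{1}_{[0,1/2)} - \mathbf{1}_{[1/2,1)}$ and the scale-$i$ wavelet as $\psi_i = 2^{-i/2}\bigl(\mathbf{1}_{[0,2^{i-1})} - \mathbf{1}_{[2^{i-1},2^i)}\bigr)$, the core function is $\Omega_i(u) = \int_{\reals} \psi_i(x)\,\psi(x-u)\,dx$, so the whole computation reduces to measuring the signed overlap of the unit-width window $[u,u+1]$ (carrying the $+,-$ profile of $\psi$) against the three-piece step function $\psi_i$.

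First I would settle the support and the interior zeros. Since $\psi_i$ is supported on $[0,2^i]$ and $\psi(\cdot-u)$ on $[u,u+1]$, the integrand vanishes unless $-1<u<2^i$, which gives the outer cases $\Omega_i(u)=0$. Within this range $\psi_i$ is piecewise constant and changes only at its three edges $x=0$, $x=2^{i-1}$ and $x=2^i$; because $\int\psi=0$, whenever the sliding window $[u,u+1]$ lies entirely inside a region on which $\psi_i$ is constant the integral is zero. This accounts for the interior zero cases $0\le u<2^{i-1}-1$ and $2^{i-1}\le u<2^i-1$, and reduces the problem to the three transition neighbourhoods where the window straddles an edge.

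Next I would evaluate the integral in each transition neighbourhood, splitting the window at its own midpoint $u+\half$ to separate the $+$ and $-$ halves of $\psi$. Near the left edge ($-1\le u<0$) and the right edge ($2^i-1\le u<2^i$) the contrast across the edge is $2^{-i/2}$, producing a trough of two linear pieces with breakpoint at the half-integer; near the central edge ($2^{i-1}-1\le u<2^{i-1}$) the contrast is doubled, $2^{-i/2}-(-2^{-i/2})$, which is exactly why the central pieces in \eqref{eq:omegaihaar} carry a factor $2$ in front of $u$ (and a peak of twice the outer amplitude). Carrying out the elementary overlap integrals in each neighbourhood and collecting terms reproduces all nine nonzero cases of \eqref{eq:omegaihaar}; I would verify continuity at each breakpoint $-1,-\half,0,2^{i-1}-1,\dots$ as an internal consistency check, and confirm that the sign pattern (trough, peak, trough) matches the products of the mother's $+,-$ halves against the local values of $\psi_i$.

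The main obstacle is purely the region bookkeeping: keeping track of which sub-interval of $[u,u+1]$ falls on which constant piece of $\psi_i$, with which sign of $\psi$, so that the breakpoints land exactly at the stated half-integer and $2^{i-1},2^i$ values and the slopes come out correct, while also checking that the degenerate small-$i$ cases (e.g.\ $i=1$, where $2^{i-1}-1=0$ merges adjacent breakpoints) remain consistent. As a cross-check on the final formula I would specialise Proposition~\ref{prop:psilj}: setting $\ell=0$ in \eqref{eq:HaarXCorr} reproduces $\Omega_i$ on the integers, and more generally the self-similarity $\Psi_{j,\ell}(\tau)=\Omega_{j-\ell}(2^{-\ell}\tau)$ matches \eqref{eq:HaarXCorr} region by region, confirming that the core function computed here is consistent with the discrete cross-scale autocorrelation Haar wavelets.
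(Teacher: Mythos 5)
Your proposal is correct and takes essentially the same approach as the paper: both evaluate the overlap integral directly, splitting the translated mother wavelet at its sign change $u+\tfrac{1}{2}$ and then doing region-by-region bookkeeping against the step structure of $\psi_i$ (the paper merely performs a change of variables first, so its moving window slides over $\psi$ rather than over $\psi_i$, and it obtains the interior zeros by cancellation of the two half-window integrals rather than from $\int\psi=0$ as you do). The only delicate point, which you rightly flag, is the degenerate merging of breakpoints for very small $i$; otherwise your transition-neighbourhood computation reproduces exactly the paper's case analysis.
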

Figure~\ref{fig:omegai} shows a depiction of $\Omega_i(u)$.
\begin{figure}[ht]
\centering
\resizebox{0.8\textwidth}{!}{\includegraphics{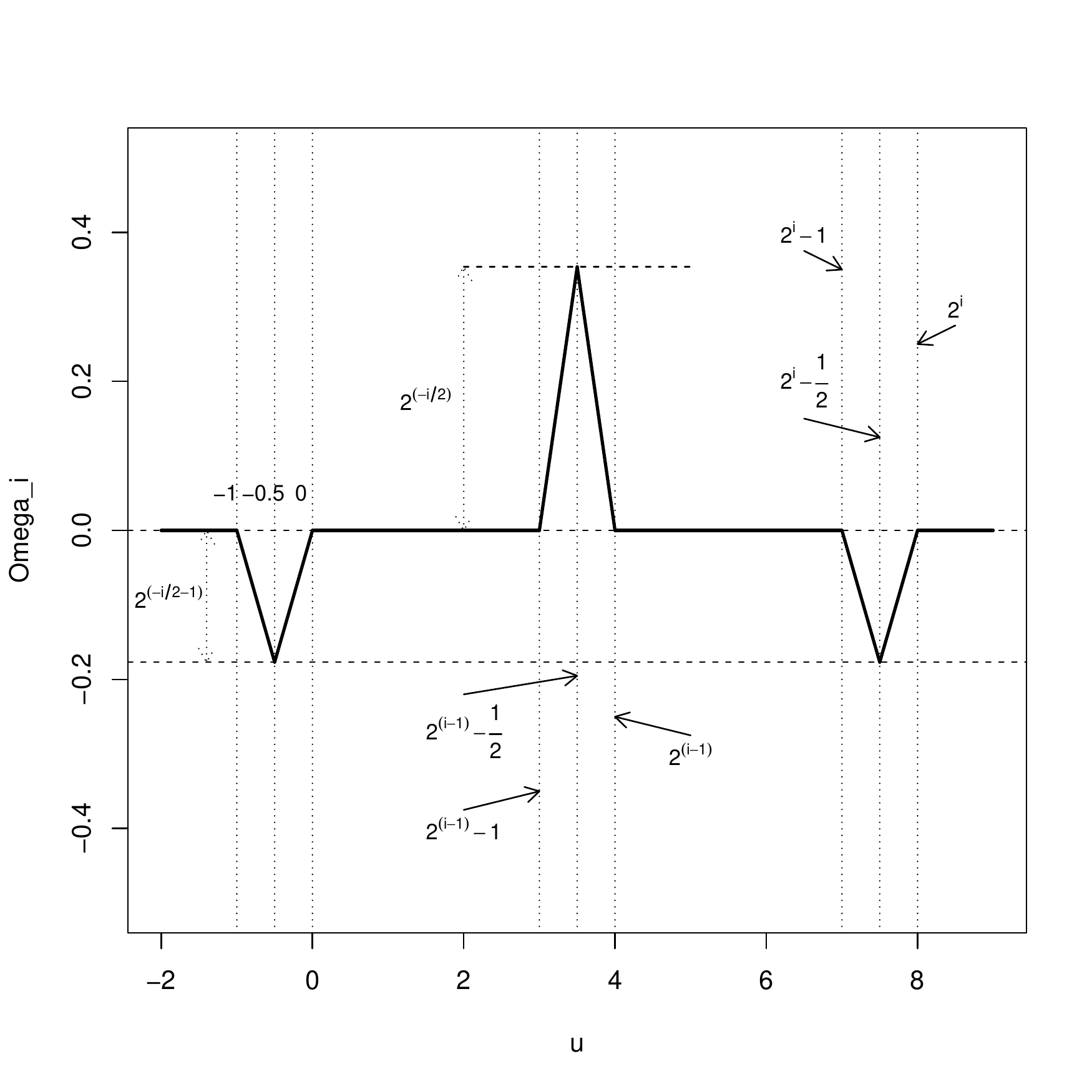}}
\caption{Depiction of $\Omega_i(u)$. The function is symmetric about $2^{(i-1)}-\tfrac{1}{2}$ and
the
extent of the function is from $-1$ on the left to $2^i$ on the right. The width of all of the
triangles is always
1 for all $i$. As $i$ increases the function gets stretched to the right (but also anchored on the
left at $u=-1$), the peaks decrease in size like $2^{-i/2}$.
\label{fig:omegai}}
\end{figure}

\begin{proof}
See Section~\ref{proof:lem:omegai}.
\end{proof}

\begin{lemma}\label{lem:bothB}
Under the conditions and notations set out so far, for the nondecimated family of discrete Haar
wavelets we have

\[T_{{\cal B}{\cal B}}(\ell, m) = \sum_{k \in {\cal B}} \sum_{n \in {\cal B}}  \left\{
		\sum_{j=1}^\infty  | i_{N, z} (j, \ell, k) i_{N, z} (j, m ,n)| \right\}^2 = {\cal O} \{
2^{2(\ell + m)} \}.\]
\end{lemma}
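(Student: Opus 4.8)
The plan is to exploit that $\mathcal{B}$ is precisely the small boundary window $b_1 \le k \le b_2$ isolated in Lemma~\ref{lem:boundi}, on which the Haar estimate~\eqref{eq:otherkbound} holds and is \emph{independent of the index} $k$. This uniformity is the crux of the argument: it decouples the double sum over $k,n \in \mathcal{B}$ into (the number of indices in $\mathcal{B}$) times a constant depending only on $\ell$ and $m$.

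First I would record the cardinality of the boundary window. Since $b_1 = [zT]+N/2+1$ and $b_2 = [zT]+N/2+N_\ell-1$, the number of integers $k$ in $\mathcal{B}$ for the scale-$\ell$ index is $b_2 - b_1 + 1 = N_\ell - 1$; for Haar wavelets $N_\ell = 2^\ell$, so $|\mathcal{B}| = \calO(2^\ell)$. The analogous window for the scale-$m$ index $n$ contains $N_m - 1 = \calO(2^m)$ integers.

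Next I would bound the inner $j$-sum uniformly in $k$ and $n$. Applying~\eqref{eq:otherkbound} together with the trivial estimates $\min(N_\ell, N_j) + N_\ell \le 2N_\ell = 2^{\ell+1}$ and $\min(N_m, N_j) + N_m \le 2^{m+1}$ gives
\begin{equation}\nonumber
\sum_{j=1}^\infty | i_{N, z}(j, \ell, k)\, i_{N, z}(j, m, n)| \le \sum_{j=1}^\infty 2^{-(j+\ell)/2}\, 2^{\ell+1}\, 2^{-(j+m)/2}\, 2^{m+1} = 2^{(\ell+m)/2 + 2} \sum_{j=1}^\infty 2^{-j} = 4 \cdot 2^{(\ell+m)/2},
\end{equation}
where the geometric series sums to $1$ and, crucially, the right-hand side is free of $k$ and $n$. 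Convergence of the $j$-sum is immediate: because $\min(N_\ell, N_j) \le N_\ell$ removes any growth in $j$, only the summable factor $2^{-j}$ remains.

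Finally I would assemble the pieces. Squaring the uniform bound gives $\{\sum_j |i_{N,z}(j,\ell,k)\, i_{N,z}(j,m,n)|\}^2 \le 16 \cdot 2^{\ell+m}$ for every pair $(k,n) \in \mathcal{B}\times\mathcal{B}$, and summing this constant over the $|\mathcal{B}|\cdot|\mathcal{B}| = \calO(2^\ell)\cdot\calO(2^m) = \calO(2^{\ell+m})$ index pairs yields $T_{\mathcal{B}\mathcal{B}}(\ell,m) \le \calO(2^{\ell+m}) \cdot 16 \cdot 2^{\ell+m} = \calO(2^{2(\ell+m)})$, as claimed. There is no serious analytic obstacle: the only point requiring care is checking that~\eqref{eq:otherkbound} is genuinely uniform across the window, so that the $k$- and $n$-summations enter only through the size of $\mathcal{B}$ rather than through the summands; the rest is a geometric-series estimate and a count of lattice points.
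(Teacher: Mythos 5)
Your proof is correct and takes essentially the same route as the paper's: both rest on the uniformity in $k$ and $n$ of the Haar boundary bound~\eqref{eq:otherkbound}, the lattice-point count $|{\cal B}| = N_\ell - 1$ (respectively $N_m - 1$ for the $n$-window), and a geometric series in $j$. The only difference is that you majorize $\min(N_\ell, N_j) + N_\ell \leq 2N_\ell$ at the outset, which collapses the paper's three-range case analysis of the $j$-sum (for $\ell < m$: $j < \ell$, then $\ell \leq j < m$, then $j \geq m$) and its ensuing exact algebra into a single geometric series, losing only a constant factor relative to the paper's sharper bookkeeping.
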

\begin{proof}
See Section~\ref{proof:lem:bothB}.
\end{proof}

\begin{lemma}
\label{lem:cross}
Under the conditions and notations set out so far, for the nondecimated family of discrete Haar
wavelets we have that the order of the cross terms is:
\begin{align*}
  T_{\not{\cal B}{\cal B}}(\ell, m) &= \sum_{k \not\in {\cal B}} \sum_{n \in {\cal B}}  \left\{
		\sum_{j=1}^\infty  | i_{N, z} (j, \ell, k) i_{N, z} (j, m ,n)| \right\}^2 = {\cal
O}(2^{2(\ell+m)}) \\
  T_{{\cal B}{\not\cal B}}(\ell, m) &= \sum_{k \in {\cal B}} \sum_{n \not\in {\cal B}}  \left\{
		\sum_{j=1}^\infty  | i_{N, z} (j, \ell, k) i_{N, z} (j, m ,n)| \right\}^2 = {\cal O} (
2^{2(\ell+m)}).
\end{align*}
\end{lemma}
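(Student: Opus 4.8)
The plan is to bound the two cross terms directly, inserting the appropriate estimate from Lemma~\ref{lem:boundi} on each of the two index sets, mirroring the argument used for $T_{{\cal B}{\cal B}}$ in Lemma~\ref{lem:bothB} but now mixing the two regimes. I treat $T_{\not{\cal B}{\cal B}}$ in detail; the term $T_{{\cal B}{\not\cal B}}$ then follows by interchanging the roles of $(\ell,k)$ and $(m,n)$. For $k\notin{\cal B}$ the comparison bound $|i_{N,z}(j,\ell,k)|\le|\Psi_{j,\ell}(k-2[zT]+N/2-1)|$ of \eqref{eq:maineq} applies, whereas for $n\in{\cal B}$ I use the flat Haar bound \eqref{eq:otherkbound}, writing $a_{j,m}:=2^{-(j+m)/2}\{\min(N_m,N_j)+N_m\}$. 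The decisive feature is that $a_{j,m}$ is independent of $n$, so summation over $n\in{\cal B}$ merely contributes the cardinality $|{\cal B}|=N_m-1=\calO(2^m)$.

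Next I would estimate the inner sum. With $\kappa_k:=k-2[zT]+N/2-1$, Cauchy--Schwarz in $j$ gives
\[
\Big(\sum_{j\ge1}|\Psi_{j,\ell}(\kappa_k)|\,a_{j,m}\Big)^2\le\Big(\sum_{j\ge1}a_{j,m}^2\Big)\sum_{j\ge1}|\Psi_{j,\ell}(\kappa_k)|^2,
\]
which neatly decouples the two scales. Summing the right-hand factor over $k$ (extending from $k\notin{\cal B}$ to all shifts, valid since every term is nonnegative) turns $\sum_{\kappa}|\Psi_{j,\ell}(\kappa)|^2$ into $\|\Psi_{j,\ell}\|_{\ell^2(\ints)}^2$. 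Combining this with the cardinality factor, I obtain
\[
T_{\not{\cal B}{\cal B}}(\ell,m)\le |{\cal B}|\Big(\sum_{j\ge1}a_{j,m}^2\Big)\sum_{j\ge1}\|\Psi_{j,\ell}\|_{\ell^2(\ints)}^2,
\]
so it remains to show that $\sum_j a_{j,m}^2=\calO(2^m)$ and $\sum_j\|\Psi_{j,\ell}\|_{\ell^2(\ints)}^2=\calO(2^\ell)$.

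Both of these are where the real work lies, and both are consequences of the explicit Haar formula of Proposition~\ref{prop:psilj}. From it one reads off that $\Psi_{j,\ell}$ is supported on $\calO(2^{\max(j,\ell)})$ points on which it is bounded by $\calO(2^{-|j-\ell|/2})$, giving $\|\Psi_{j,\ell}\|_{\ell^2(\ints)}^2=\calO(2^{2\min(j,\ell)-\max(j,\ell)})$; summing this over $j$ (splitting at $j=\ell$ and bounding the two geometric tails) yields $\calO(2^\ell)$. For $\sum_j a_{j,m}^2$ I substitute $N_j=2^j$, split at $j=m$ according to which argument attains the minimum, and sum the resulting geometric series to get $\calO(2^m)$. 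Together these give $T_{\not{\cal B}{\cal B}}(\ell,m)=\calO(2^{\ell+2m})=\calO(2^{2(\ell+m)})$, and symmetrically $T_{{\cal B}{\not\cal B}}(\ell,m)=\calO(2^{2\ell+m})=\calO(2^{2(\ell+m)})$.

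I expect the main obstacle to be the careful bookkeeping of the scale-dependent quantities: keeping track of which boundary set (that at scale $\ell$ versus scale $m$) governs each index, and correctly resolving the $\min(N_m,N_j)$ structure in $a_{j,m}$ across the $j<m$ and $j\ge m$ regimes. A secondary subtlety is that the comparison bound \eqref{eq:maineq} is only valid off the boundary region (and, for $k<b_1$, requires the condition $[zT]>N_j-2$ of \eqref{eq:b1}), so I must check that restricting to $k\notin{\cal B}$ really is the situation in which the bound applies for the relevant scales before extending the $k$-sum to all shifts. Finally, it is worth noting that the extra cardinality factor $\calO(2^m)$ (respectively $\calO(2^\ell)$) produced by the flat bound over the boundary set is precisely what makes these cross terms of order $2^{2(\ell+m)}$ rather than the smaller interior order.
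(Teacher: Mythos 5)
Your proof is correct in substance but takes a genuinely different route from the paper's. The paper expands the square as a double sum over scale pairs $(j,p)$ and factorizes $T_{\not{\cal B}{\cal B}}(\ell,m)=\sum_{j,p}U(j,p,m)V(j,p,\ell)$, with $U$ the sum over $n\in{\cal B}$ (bounded by the flat Haar bound~\eqref{eq:otherkbound}) and $V$ the sum over $k\notin{\cal B}$ (bounded via~\eqref{eq:maineq} by the fourth-order quantity $B^{(0)}_\ell(j,p)$); it then needs the \emph{off-diagonal} bound $B^{(0)}_\ell(j,p)\leq K2^{-(j+p)/2}2^{2\ell}$ from Proposition~\ref{prop:B} and a lengthy explicit evaluation of geometric sums split at scale $m$. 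Your Cauchy--Schwarz step in $j$ decouples the two scales at the outset, so you never need the off-diagonal values at all: only the diagonal sums $\sum_j a_{j,m}^2=\calO(2^m)$ and $\sum_j\|\Psi_{j,\ell}\|^2_{\ell^2(\ints)}=\sum_j B^{(0)}_\ell(j,j)=\calO(2^\ell)$, the latter being exactly the computation the paper already performs (via Proposition~\ref{prop:B}, Parts A, B and E) inside the proof of Lemma~\ref{lem:order2l}. This is shorter, replaces the heavy bookkeeping of Lemmas~\ref{lem:bothB}--\ref{lem:cross} by two one-line geometric sums, and in fact yields the sharper orders $\calO(2^{\ell+2m})$ and $\calO(2^{2\ell+m})$, which imply the stated $\calO\{2^{2(\ell+m)}\}$. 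Your bookkeeping of the two boundary sets (cardinality $N_m-1$ for $n$, scale-$\ell$ set for $k$) and your caveat about the condition $[zT]>N_j-2$ in~\eqref{eq:b1} match the paper's implicit treatment.

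One local repair is needed. You justify $\|\Psi_{j,\ell}\|^2_{\ell^2(\ints)}=\calO(2^{2\min(j,\ell)-\max(j,\ell)})$ by ``support of size $\calO(2^{\max(j,\ell)})$ times squared sup bound $\calO(2^{-|j-\ell|})$'', but that computation gives only $\calO(2^{\min(j,\ell)})$, which is \emph{not} summable over $j>\ell$ and would break the step $\sum_j\|\Psi_{j,\ell}\|^2=\calO(2^\ell)$. The order you state is nevertheless correct: the Haar formula~\eqref{eq:HaarXCorr} shows that $\Psi_{j,\ell}$ is nonzero on only $\calO(2^{\min(j,\ell)})$ integers (three bumps each of width about $2^{\min(j,\ell)}$ inside the full support), and $2^{\min(j,\ell)}\cdot2^{-(\max(j,\ell)-\min(j,\ell))}=2^{2\min(j,\ell)-\max(j,\ell)}$; alternatively, simply cite the exact diagonal values $B^{(0)}_\ell(j,j)$ in Proposition~\ref{prop:B}. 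With that fix your argument is complete.
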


\begin{proof} 
See Section~\ref{proof:lem:cross}.
\end{proof}

\begin{lemma}
\label{lem:PsiBound}
Under the conditions and notations set out so far, for the nondecimated family of discrete Haar
wavelets we
have that
\[ T_{\not{{\cal B}}\not{{\cal B}}}(\ell, m) = \sum_{k \not\in {\cal B}} \sum_{n \not\in {\cal B}}
\left\{\sum_{j=1}^\infty|i_{N,z}(j,\ell,k)i_{N,z}(j,m,n)|\right\}^2 = {\cal O} \{ 2^{2(\ell +
m)}\}.\]
\end{lemma}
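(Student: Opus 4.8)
The plan is to exploit that, away from the boundary region ${\cal B}$, the windowed cross-scale wavelet $i_{N,z}$ is dominated by the genuine cross-correlation wavelet $\Psi_{j,\ell}$, so that $T_{\not{{\cal B}}\not{{\cal B}}}$ collapses to a lattice sum involving only the $\Psi_{j,\ell}$, which can then be read off the explicit Haar formula of Proposition~\ref{prop:psilj}. First I would invoke the first part of Lemma~\ref{lem:boundi}: since $k\not\in{\cal B}$ and $n\not\in{\cal B}$ both fall in the regimes covered by \eqref{eq:b1}--\eqref{eq:b2}, the clean bound \eqref{eq:maineq} applies to each factor, giving $|i_{N,z}(j,\ell,k)|\le|\Psi_{j,\ell}(k-2[zT]+N/2-1)|$ and likewise for $i_{N,z}(j,m,n)$. (The proviso $[zT]>N_j-2$ in~\eqref{eq:b1} is met for every scale with $2^j\lesssim N$, which are the only scales that materially enter the window sum since $N\ll[zT]$; the coarser tail is negligible under the assumed summability of the spectrum.) Substituting these bounds, enlarging the ranges of $k,n$ to all of $\ints$ (legitimate as every summand is nonnegative), and applying the integer shifts $\kappa=k-2[zT]+N/2-1$, $\nu=n-2[zT]+N/2-1$ yields
\[ T_{\not{{\cal B}}\not{{\cal B}}}(\ell,m)\le\sum_{\kappa\in\ints}\sum_{\nu\in\ints}\left\{\sum_{j=1}^\infty|\Psi_{j,\ell}(\kappa)|\,|\Psi_{j,m}(\nu)|\right\}^2=:S. \]

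The core estimate is then a factorisation of $S$. Applying the Cauchy--Schwarz inequality to the inner sum over $j$ separates the $\ell$- and $m$-dependence,
\[ \left\{\sum_{j}|\Psi_{j,\ell}(\kappa)|\,|\Psi_{j,m}(\nu)|\right\}^2\le\left(\sum_{j}|\Psi_{j,\ell}(\kappa)|^2\right)\left(\sum_{j}|\Psi_{j,m}(\nu)|^2\right), \]
so that, summing over $\kappa$ and $\nu$, the double sum factors completely:
\[ S\le\left(\sum_{\kappa\in\ints}\sum_{j=1}^\infty|\Psi_{j,\ell}(\kappa)|^2\right)\left(\sum_{\nu\in\ints}\sum_{j=1}^\infty|\Psi_{j,m}(\nu)|^2\right)=\left(\sum_{j=1}^\infty\|\Psi_{j,\ell}\|_2^2\right)\left(\sum_{j=1}^\infty\|\Psi_{j,m}\|_2^2\right). \]

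It then remains to bound $\sum_j\|\Psi_{j,\ell}\|_2^2$, which I would read directly from Proposition~\ref{prop:psilj}: for each pair $(j,\ell)$ the Haar function $\Psi_{j,\ell}$ consists of a bounded number of triangular bumps of height $\calO(2^{-|j-\ell|/2})$ and base $\calO(2^{\min(j,\ell)})$, whence $\|\Psi_{j,\ell}\|_2^2=\calO(2^{2\min(j,\ell)-\max(j,\ell)})$. Splitting the sum over $j$ at $j=\ell$ and summing the two resulting geometric series gives $\sum_{j}\|\Psi_{j,\ell}\|_2^2=\calO(2^\ell)$, and analogously $\sum_j\|\Psi_{j,m}\|_2^2=\calO(2^m)$. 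Hence $S=\calO(2^{\ell+m})$, which is comfortably within the claimed $\calO\{2^{2(\ell+m)}\}$; indeed, unlike the boundary pieces $T_{{\cal B}{\cal B}}$, $T_{\not{\cal B}{\cal B}}$ and $T_{{\cal B}\not{\cal B}}$, the interior piece is genuinely of the smaller order $\calO(2^{\ell+m})$.

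I expect the only real friction to be bookkeeping rather than substance: verifying that the domination \eqref{eq:maineq} is simultaneously available for both arguments over the whole complement of ${\cal B}$ — in particular confirming that the coarse-scale proviso only excludes scales whose wavelets exceed the window and which contribute negligibly — and tracking the integer shift when passing to the unrestricted lattice sums. Once this reduction to $S$ is justified, the Cauchy--Schwarz factorisation and the geometric summation of the Haar $\ell^2$ norms are entirely routine.
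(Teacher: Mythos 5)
Your proposal is correct, but after the (shared) first step it follows a genuinely different route from the paper's proof. Both arguments begin identically, by invoking Lemma~\ref{lem:boundi} to dominate $|i_{N,z}(j,\ell,k)|$ by $|\Psi_{j,\ell}(k-2[zT]+N/2-1)|$ for $k\not\in{\cal B}$ (and similarly in $m,n$). From there the paper expands the square as a double sum over scale pairs $(j,p)$, interchanges summation to factor the expression into $\sum_{j,p}V(j,p,\ell)V(j,p,m)$ with $V(j,p,\ell)=\sum_{k\not\in{\cal B}}|i_{N,z}(j,\ell,k)i_{N,z}(p,\ell,k)|$, bounds each $V$ by the fourth-order quantity $B^{(0)}_\ell(j,p)=\sum_q|\Psi_{j,\ell}(q)\Psi_{p,\ell}(q)|$, and then invokes the \emph{off-diagonal} global bound $B^{(0)}_\ell(j,p)\leq K2^{-(j+p)/2}2^{2\ell}$ of Proposition~\ref{prop:B} to sum a geometric series in $(j,p)$, landing exactly on $\calO\{2^{2(\ell+m)}\}$. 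You instead keep the square intact, apply Cauchy--Schwarz in $j$ to decouple the $\ell$- and $m$-dependence, factor the two lattice sums completely, and are left needing only the \emph{diagonal} quantities $\sum_j\|\Psi_{j,\ell}\|_2^2=\sum_j B^{(0)}_\ell(j,j)=\calO(2^\ell)$ --- a computation the paper itself performs (in the proof of the first part of Lemma~\ref{lem:order2l}, using Parts A, B and E of Proposition~\ref{prop:B}). What each approach buys: yours is leaner (it bypasses the off-diagonal entries of $B^{(0)}$ and Lemma~\ref{lem:cross}-style bookkeeping entirely) and it delivers the strictly sharper bound $\calO(2^{\ell+m})$ for this interior term, confirming your observation that only the boundary terms $T_{{\cal B}{\cal B}}$, $T_{\not{\cal B}{\cal B}}$, $T_{{\cal B}\not{\cal B}}$ are genuinely of order $2^{2(\ell+m)}$; the paper's route, while cruder here, is uniform in style with its treatment of those other three terms, so the whole of Lemma~\ref{lem:orderlpm} is handled by one template.

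One small caution: your parenthetical justification of the proviso $[zT]>N_j-2$ in~\eqref{eq:b1} is not quite right. The lemma is a purely deterministic statement about wavelet quantities, so ``summability of the spectrum'' cannot be what controls the coarse-scale tail, and scales with $2^j$ comparable to $[zT]$ (not just $2^j\lesssim N$) do produce nonzero contributions to $i_{N,z}$. However, the paper's own proof applies the bound \eqref{eq:maineq} uniformly over all $j$ with no comment at all, implicitly working in the asymptotic regime where $[zT]$ exceeds the scales in play; so this is a shared, not a differentiating, issue, and you deserve credit for at least flagging it.
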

\begin{proof}
See Section~\ref{proof:lem:PsiBound}.
\end{proof}

\section{Some Exact Formulae for Haar wavelets}
\label{app:Haarexact}
{\em Fourth-order absolute value wavelet cross-correlations for Haar wavelets.}
In what follows we demonstrate new results on the the fourth-order absolute value wavelet cross-correlations, for Haar wavelets which were used in showing the previous results in Appendix~\ref{app:thm:JNphi}.

Recall these were defined as
$B^{(r)}_\ell (j, i) = \sum_{p=-\infty}^\infty |p|^r | \Psi_{j, \ell}(p) \Psi_{i, \ell} (p)|$
for $r=0, 1$ and scales $\ell, j, i \in \nats$.

The $B$ products are symmetric in their arguments, $B^{(r)}_\ell (j, i) = B_\ell^{(r)} (i, j)$. Note that for $r=0$, for ease of notation, these $B^{(0)}$ quantities appeared as $B$ in the previous proofs.

\begin{proposition}\label{prop:B}
For Haar wavelets. (Part A) For $i, j > \ell$:
\begin{equation}\nonumber
B^{(0)}_\ell (j, i) =  \begin{cases}
2^{-j} (2^{2\ell-1} + 1) & \mbox{for $j=i$},\\
2^{-j}  (2^{2\ell-1} + 1) 2^{-3/2} & \mbox{for $i=j+1$},\\
2^{-j/2} 2^{-i/2}  (2^{2\ell-1}+ 1)/6 & \mbox{for $|j-i| > 1$.}
\end{cases}
\end{equation}
Also, for all $i, j, \ell$ such that $i, j > \ell$,
$B_\ell^{(0)}(j,i)$ is bounded by
\begin{equation}\nonumber
B_\ell^{(0)} (j, i) \leq 2^{-j/2} 2^{-i/2} 2^{2\ell}.
\end{equation}
(Part B) For $i, j < \ell$:
\begin{equation}\nonumber
B^{(0)}_\ell (j, i) = \begin{cases}
 2^{-\ell} (2^{2j-1} +1 ) & \mbox{for } i=j < \ell,\\
\tfrac{3}{2} 2^{-\ell} 2^{-j/2} 2^{5i/2 - 1} & \mbox{for } i < j < \ell.
\end{cases}
\end{equation}
(Part C) For $i < \ell < j$:
\begin{equation}
B^{(0)}_\ell (j, i) = \begin{cases}
\tfrac{1}{8} 2^{-(\ell+1)/2} 2^{3i/2} ( 2^{i-\ell} + 2 ) & \mbox{for } j = \ell+1,\\
\tfrac{1}{8} 2^{-j/2} 2^{3i/2} ( 2 - 2^{i-\ell}) & \mbox{for } j > \ell + 1.
\end{cases}
\label{eq:sandwichB}
\end{equation}
(Part D) For $\ell=j$ and $i > \ell$:
\begin{equation}
B^{(0)}_\ell (\ell, i) = 2^{-(i - \ell)/2} \begin{cases}
\frac{17}{9} 2^{\ell-3} & \mbox{for } i = \ell+1,\\
\frac{17}{27} 2^{\ell-3}& \mbox{for } i > \ell + 1.
\end{cases}
\label{eq:oneequal}
\end{equation}
For $i < \ell$ we have the following bound:
\begin{equation}\nonumber
B^{(0)}_{\ell} (\ell, i) \leq 2^{3i/2} 2^{-\ell/2}.
\end{equation}
(Part E) Finally, when all indices are equal we can use (34) from \cite{nason00:wavelet}
to show
\begin{equation}\nonumber
B^{(0)}_{\ell} (\ell, \ell) = \sum_p \Psi^2_\ell (p) = A_{\ell, \ell} = \frac{1}{3} 2^{-\ell}
(2^{2\ell} + 5),
\end{equation}
for $\ell > 0$ and $A$ is the matrix from~\cite{nason00:wavelet}.

The symmetry of $B$  permits evaluation of $B_\ell^{(0)} (j, i)$
for other orderings of $(i, j)$.

An overall bound for all $i,j,\ell$ is $B^{(0)}_\ell (j,i) \leq K  2^{-(j+i)/2} 2^{2\ell}$ for some
positive constant $K$.
\end{proposition}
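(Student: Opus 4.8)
The plan is to substitute the exact closed forms for the Haar cross-scale autocorrelation wavelets---equation~\eqref{eq:HaarXCorr} of Proposition~\ref{prop:psilj} when the base scale is the finer one, and equation~\eqref{eq:Psijlessell} of Corollary~\ref{corr:psiljREMAIN} (together with the symmetry $\Psi_{j, \ell}(\tau) = \Psi_{\ell, j}(-\tau)$ and the obvious symmetry $B^{(0)}_\ell(j, i) = B^{(0)}_\ell(i, j)$) when it is the coarser one---directly into the defining sum $B^{(0)}_\ell(j, i) = \sum_p |\Psi_{j, \ell}(p)\, \Psi_{i, \ell}(p)|$, and then to evaluate it by elementary summation over the integers. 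The single structural observation that organises the whole calculation is that each $\Psi_{j, \ell}$ is a scalar prefactor $2^{-|j - \ell|/2}$ multiplying a piecewise-linear function built from three compactly supported triangular bumps, each of width $2^{\min(j, \ell)}$, whose internal slopes are fixed by the finer of the two scales while the bump \emph{locations} (near $\tau \approx 0$, $\tau \approx 2^{j - 1}$ and $\tau \approx 2^j$ in the case $\ell < j$) are set by the coarser scale. Consequently the value of $B^{(0)}_\ell(j, i)$ is determined entirely by which bumps of $\Psi_{j, \ell}$ and $\Psi_{i, \ell}$ share support.

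With this picture each part reduces to bookkeeping of bump overlap. In Part~A ($i, j > \ell$) the left-anchored bump near $\tau \approx 0$ always coincides for the two functions, which produces the recurring factor proportional to $(2^{2\ell - 1} + 1)$ encoding the base-scale-$\ell$ shape sum of a single triangular bump; the three listed sub-cases then correspond to how much further overlap there is---full coincidence of all three bumps (with the tall middle bump weighted more heavily) when $j = i$, alignment of the coarser function's middle bump with the finer function's right bump when $i = j + 1$, and no additional overlap when $|j - i| > 1$. Part~B ($i, j < \ell$) is the mirror image obtained through the symmetry relation. Parts~C ($i < \ell < j$) and~D ($\ell = j$, $i > \ell$) are the genuinely mixed regimes, in which one factor is drawn from~\eqref{eq:HaarXCorr} and the other from~\eqref{eq:Psijlessell} or equals the ordinary autocorrelation wavelet $\Psi_\ell$, so that the two factors carry different triangular widths and one must intersect the supports piece by piece (the two sub-cases in each of C and D again mark the threshold at which a neighbouring bump first enters the overlap). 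Part~E is the fully diagonal case $i = j = \ell$, where $\Psi_{\ell, \ell} = \Psi_\ell$ and the sum collapses to $A_{\ell, \ell}$ via equation~(34) of~\cite{nason00:wavelet}.

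On each overlap region the product $|\Psi_{j, \ell}(p)\,\Psi_{i, \ell}(p)|$ is, up to the prefactor $2^{-(j + i)/2 + \ell}$, a product of two discrete linear ramps, so the sums that arise are standard closed forms in $2^\ell$ (of the type $\sum_k k$, $\sum_k k^2$ and $\sum_k k(2^\ell - k)$ over a triangle); evaluating these and collecting prefactors yields the stated formulae. The overall bound $B^{(0)}_\ell(j, i) \leq K\, 2^{-(j + i)/2} 2^{2\ell}$, and the individual bounds quoted in Parts~A and~D, follow far more cheaply from a pure scaling argument: each factor has amplitude $\calO(2^{-|j - \ell|/2})$, their product is $\calO(2^{-(j + i)/2 + \ell})$, and it is supported on $\calO(2^\ell)$ integers. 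I expect the main obstacle to be not any single computation but the breadth of the case split, and in particular getting the overlap geometry exactly right in the mixed-scale Parts~C and~D, where the endpoints of the linear pieces must be matched precisely against the integer grid to avoid off-by-one errors in the constants.
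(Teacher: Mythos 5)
Your plan is, in substance, the paper's own proof. The paper likewise substitutes the exact Haar formulae \eqref{eq:HaarXCorr} and \eqref{eq:Psijlessell} into $B^{(0)}_\ell(j,i)=\sum_p|\Psi_{j,\ell}(p)\,\Psi_{i,\ell}(p)|$, organises each part by which compactly supported pieces of the two factors intersect (its Table~\ref{tab:Psirange} is precisely your ``bump'' bookkeeping, and the thresholds $i=j+1$, $j=\ell+1$, $i=\ell+1$ arise exactly as you describe), and then evaluates the surviving products of linear ramps by elementary summation; Part E is dispatched via (34) of \cite{nason00:wavelet} just as you propose. Your amplitude-times-support scaling argument for the two stated inequalities and the overall bound is a legitimate, cheaper route; the paper leaves these to follow from the exact formulae.

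The one step that would fail as written is your treatment of Part B: for $i<j<\ell$ it is \emph{not} ``the mirror image obtained through the symmetry relation.'' Reflecting via $\Psi_{j,\ell}(\tau)=\Psi_{\ell,j}(-\tau)$ turns the sum into $\sum_q|\Psi_{\ell,j}(q)\,\Psi_{\ell,i}(q)|$, whose two factors now carry \emph{different} base scales $j\neq i$; this is no longer a quantity of the form $B^{(0)}_m(a,b)$, so no Part A formula can be invoked (only the diagonal case $i=j<\ell$ reduces by symmetry, which is exactly what the paper does there). The overlap geometry is also qualitatively different from Part A: when $i,j<\ell$ the three bumps of both factors are anchored at the common abscissae $-2^\ell$, $-2^{\ell-1}$, $0$ with widths $2^j$ and $2^i$, so \emph{all three} pairs of bumps overlap (nested) for every $i<j<\ell$ --- which is why Part B has only two sub-cases and why its answer $\tfrac32\,2^{-\ell}2^{-j/2}2^{5i/2-1}$ has a shape unlike anything in Part A. The paper handles this with a fresh three-region (``front/middle/back'') computation; your general substitute-and-sum method does cover it, but the symmetry shortcut does not, so that case needs to be carried out explicitly.
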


\begin{proof}
See Section~ \ref{proof:prob:B}.
\end{proof}

\section{LPACF of Eastport Precipitation Data at Different Window Widths}
\label{sec:eastportwidths}

The plots below were produced by the following functions executed using the {\tt lpacf} package with
binwidths of 160, 80 and 40.
\begin{verbatim}
function(binwidth=250){
#
# Compute the Epanechnikov kernel smoothed local PACF using
# a specified binwidth, using parallel processing function
# mclapply on all points.
#
# Then plot the answer: only the first four lags
#  using colours 1 thru 4.
#
plot(lpacf.Epan(EastPortPrecip, allpoints=TRUE, binwidth=binwidth,
           		lapplyfn=mclapply), lags=1:4, lcol=1:4)
#
# Construct and plot "standard" confidence intervals
#
ci <- 1.96/sqrt(binwidth)
abline(h = c(-ci, ci), lty = 2, col = 2)
}
\end{verbatim}

\begin{figure}
  \begin{center}
  {\includegraphics[width=0.45\textwidth]{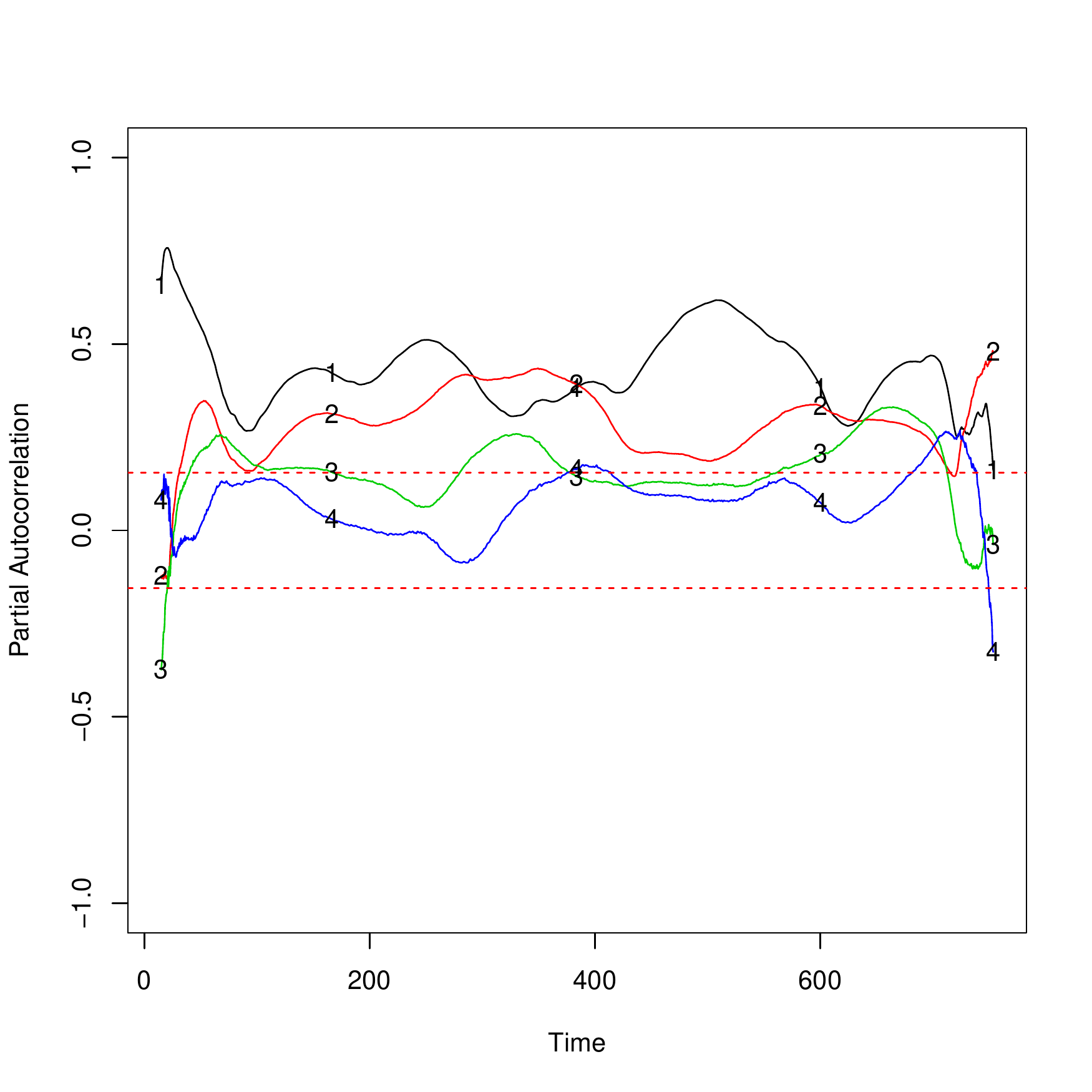}\label{fig:eastport160}}
  \hfill
  {\includegraphics[width=0.45\textwidth]{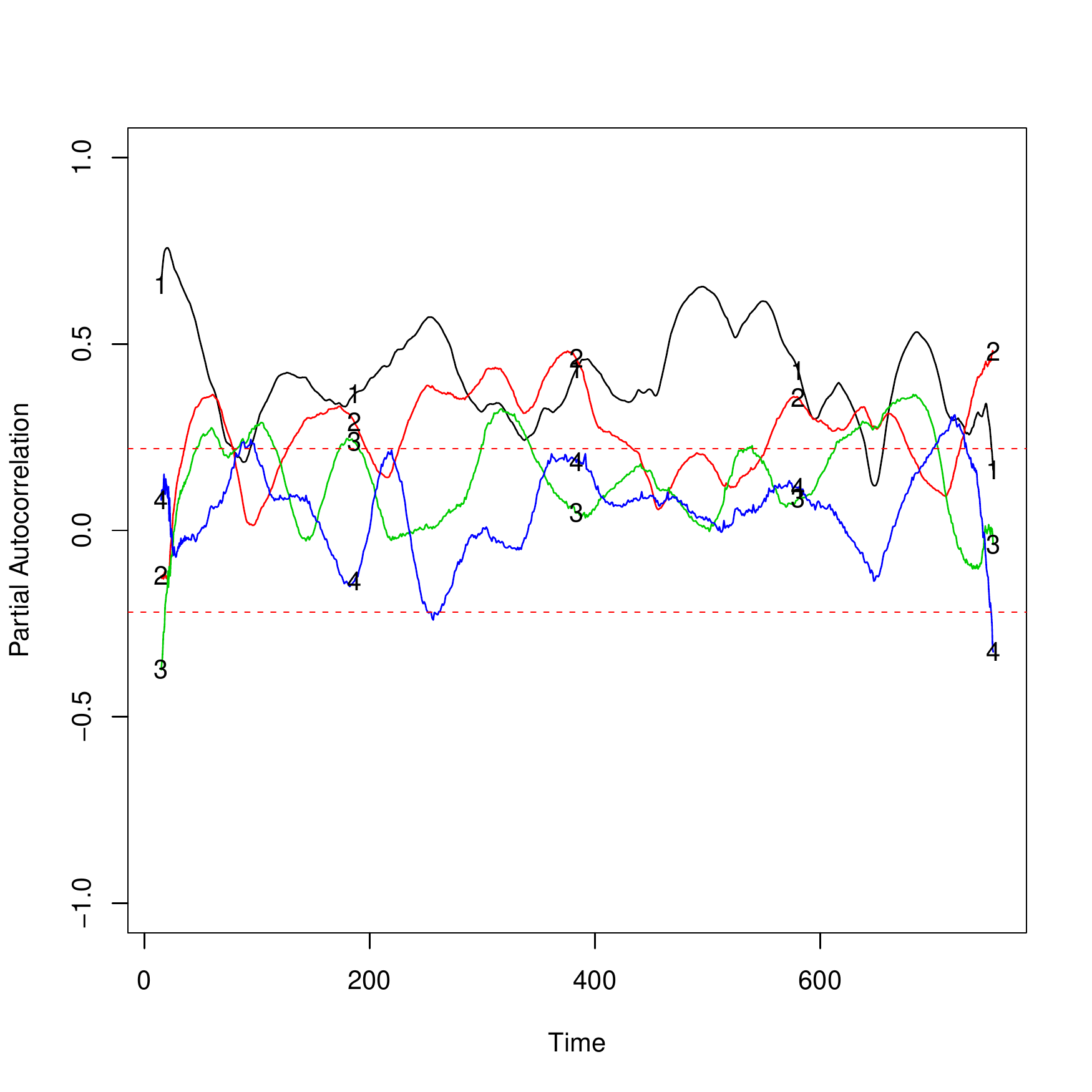}\label{fig:eastport80}}\\
  {\includegraphics[width=0.45\textwidth]{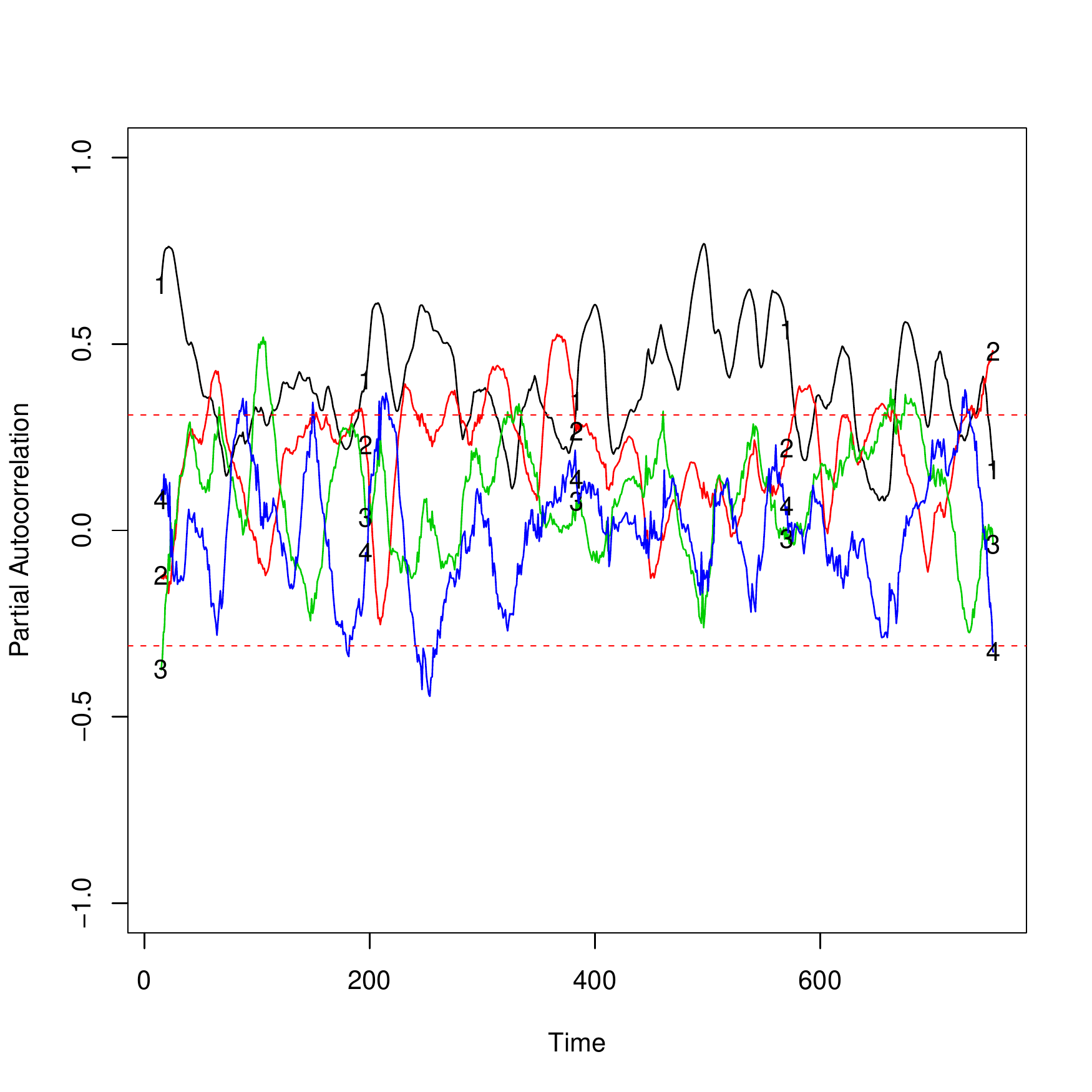}\label{fig:eastport40}}
  \end{center}
    \caption{Windowed partial autocorrelation of Eastport Precipitation Data,
$\tilde{q}_W(z, \tau)$, of left for lags one to four indicated on each curve.
Horizontal red dotted
lines are approximate 95\% confidence intervals. All plots were created with the Epanechnikov window with sizes a)
$L=160$, b) $L=80$, c) $L=40$.}\label{fig:eastportWindows}
\end{figure}

\section{Proofs from Section~\ref{sec:2}.}

  \subsection{Proof of Proposition \ref{prop:lpacf}}\label{app:prop:lpacf}
\begin{proof}
We shall use the notation $P_{[zT],\tau}(X_{{[zT]},T})=\hat{X}^{(b)}_{[zT],T}$ and
$P_{[zT],\tau}(X_{{[zT]+\tau},T})=\hat{X}^{(f)}_{[zT]+\tau,T}$, since these are the linear
predictors of $X_{{[zT]},T}$ ({\em b}ack-casted), respectively $X_{{[zT]+\tau},T}$ ({\em
f}orecasted), using the set of predictors $X_{[zT]+1,T}, \ldots, X_{[zT]+\tau-1,T}$.

Decomposing the projection space $\overline{\mbox{sp}}(X_{[zT],T}, \ldots, X_{[zT]+\tau-1,T})$ into
$\overline{\mbox{sp}}(X_{[zT]+1,T},$ $\ldots,$ \\$X_{[zT]+\tau-1,T})$ and its orthogonal complement,
we
can also write $\hat{X}_{[zT]+\tau,T}$ as
\begin{align}
\hat{X}_{[zT]+\tau,T}=\hat{X}^{(f)}_{[zT]+\tau,T}+P^{\perp}_{[zT],\tau}(X_{{[zT]+\tau},T}),
\label{eq:proj}
\end{align}
 where $P^{\perp}_{[zT],\tau}(\cdotp)$ denotes the projection onto the orthogonal complement space
above. Since this space is $\overline{\mbox{sp}}(X_{[zT],T}-\hat{X}^{(b)}_{[zT],T})$, it then
follows that $P^{\perp}_{[zT],\tau}(X_{{[zT]+\tau},T})=a(X_{{[zT]},T}-\hat{X}^{(b)}_{[zT],T})$, and
using equation \eqref{eq:proj} we obtain
$$
\Cov \left( \hat{X}_{[zT]+\tau,T},X_{{[zT]},T}-\hat{X}^{(b)}_{[zT],T} \right)= a \Var \left(
X_{{[zT]},T}-\hat{X}^{(b)}_{[zT],T} \right).
$$

Due to orthogonality of projection spaces, we have $\left(X_{{[zT]},T}-\hat{X}^{(b)}_{[zT],T}\right)
\perp \overline{\mbox{sp}}(X_{[zT]+1,T},$ $\ldots,$ $X_{[zT]+\tau-1,T})$ and from the equation above
and
equation \eqref{eq:calclpacf}, it follows that $a=\phisT$.

Hence $P^{\perp}_{[zT],\tau}(X_{{[zT]+\tau},T}))=\phisT (X_{{[zT]},T}-\hat{X}^{(b)}_{[zT],T})$ and
we obtain
\beqa \nonumber
\phisT&=&\frac{\Cov\left(P^{\perp}_{[zT],\tau}(X_{{[zT]+\tau},T}))
,X_{{[zT]},T}-\hat{X}^{(b)}_{[zT],T} \right)}{\Var\left( X_{{[zT]},T}-\hat{X}^{(b)}_{[zT],T}
\right)}\\ \nonumber
&=&\frac{\Cov\left( {X}_{[zT]+\tau,T},X_{{[zT]},T}-\hat{X}^{(b)}_{[zT],T} \right)}{\Var\left(
X_{{[zT]},T}-\hat{X}^{(b)}_{[zT],T} \right)},
\eeqa
as
$\Cov(X_{{[zT]+\tau},T}-P^{\perp}_{[zT],\tau}(X_{{[zT]+\tau},T}),Y)=0,\forall Y
\in\overline{\mbox{sp}}(X_{[zT],T}-\hat{X}^{(b)}_{[zT],T})$. Hence,
\begin{equation}\label{eq:lpacf1}
 \phisT =\frac{\Cov\left(
{X}_{[zT]+\tau,T}-\hat{X}^{(f)}_{[zT]+\tau,T},X_{{[zT]},T}-\hat{X}^{(b)}_{[zT],T}
\right)}{\Var\left( X_{{[zT]},T}-\hat{X}^{(b)}_{[zT],T} \right)}
\end{equation}
because $\hat{X}^{(f)}_{[zT]+\tau,T} \perp \overline{\mbox{sp}}(X_{[zT],T}-\hat{X}^{(b)}_{[zT],T})$.

Recall from equation~\eqref{eq:deflpacf} that
\begin{displaymath}
\pT{z}{\tau}=
\cor{X_{{[zT]+\tau},T}-P_{[zT],\tau}(X_{{[zT]+\tau},T}),X_{[zT],T}-P_{[zT],\tau}(X_{{[zT]},T})},
\end{displaymath}
or
equivalently $\pT{z}{\tau}=
\cor{X_{{[zT]+\tau},T}-\hat{X}^{(f)}_{[zT]+\tau,T},X_{[zT],T}-\hat{X}^{(b)}_{[zT],T}}$
which, combined with equation~\eqref{eq:lpacf1}, yields
$$
\pT{z}{\tau}= \phisT \left\{\frac{\Var\left( X_{{[zT]},T}-\hat{X}^{(b)}_{[zT],T} \right)}
{\Var\left( X_{{[zT]+\tau},T}-\hat{X}^{(f)}_{[zT]+\tau,T} \right)} \right\}^{1/2}
$$
as desired.
\end{proof}

 \subsection{Proof of Proposition \ref{prop:lpacf2}}\label{app:prop:lpacf2}

\begin{proof}

As $\hat{X}^{(b)}_{[zT],T}$ and $\hat{X}^{(f)}_{[zT]+\tau,T}$ are projections of
$X_{{[zT]},T}$ and $X_{{[zT]+\tau},T}$, respectively, on the space  $\overline{\mbox{sp}}(X_{[zT]+1,T},
\ldots, X_{[zT]+\tau-1,T})$ it follows that
$\E ( X_{{[zT]},T}-\hat{X}^{(b)}_{[zT],T} )=0$ and $\E ( X_{{[zT]+\tau},T}-\hat{X}^{(f)}_{[zT]+\tau,T} )=0$.
Hence,
the numerator and denominator in~\eqref{eq:deflpacf2} can  be
re-expressed as a Mean Squared Prediction Error (MSPE), since
\beqann
\Var ( X_{{[zT]},T}-\hat{X}^{(b)}_{[zT],T} )&=& \E (\hat{X}^{(b)}_{[zT],T}-X_{{[zT]},T})^2 = \mbox{MSPE}(\hat{X}^{(b)}_{[zT],T},X_{{[zT]},T}),\\
\Var ( X_{{[zT]+\tau},T}-\hat{X}^{(f)}_{[zT]+\tau,T} )&=& \E (\hat{X}^{(f)}_{[zT]+\tau,T}-X_{{[zT]+\tau},T})^2\\
&=& \mbox{MSPE}(\hat{X}^{(f)}_{[zT]+\tau,T},X_{{[zT]+\tau},T}).
\eeqann

Using these expressions we can rewrite $\pT{z}{\tau}$ from~\eqref{eq:deflpacf2} as
\beq\nonumber
\pT{z}{\tau}= \phisT \left\{ \frac
{\mbox{MSPE}(\hat{X}^{(b)}_{[zT],T},X_{{[zT]},T})}
{\mbox{MSPE}(\hat{X}^{(f)}_{[zT]+\tau,T},X_{{[zT]+\tau},T})} \right\}^{1/2}. 
\eeq

Now use the fact that the MSPE of a linear predictor of $X_{t,T}$ can be written as

\beq\nonumber
\mspeA{X}{{t,T}}=\mspeeA{X}{{t,T}}=\bm{b}_t^T \Sigma_{t,T} \bm{b}_{t},
\eeq
where $\bm{b}_t = ( b_{t-1,T},  \ldots, b_{0,T}, -1)^T$ and $\Sigma_{t,T}$ is the covariance  of $X_{0,T},\ldots,X_{t,T}$ \citep[Section 3.3]{fryz03:forecasting}. In our case, the back-casted and forecasted values of $X_{{[zT]},T}$ and $X_{{[zT]+\tau},T}$
are also linear predictors using the window of observations $X_{[zT]+1,T}, \ldots, X_{[zT]+\tau-1,T}$, and thus their corresponding MSPE can be expressed as
\begin{align}
\mbox{MSPE}(\hat{X}^{(b)}_{[zT],T},X_{{[zT]},T}) &= (\bb)^T \sigb \bb, \label{eq:mspeb}\\
\mbox{MSPE}(\hat{X}^{(f)}_{[zT]+\tau,T},X_{{[zT]+\tau},T}) & =  (\bf)^T \sigf \bf, \label{eq:mspef}
\end{align}
where, as above, the $\tau \times 1$ coefficient vectors are
$\bb= ( -1,  \tilde{b}^{(b)}_{1,T}, \ldots,  \tilde{b}^{(b)}_{{\tau-1},T})^T$ and
$\bf= ( {b}^{(f)}_{\tau-2,T}, \ldots, {b}^{(f)}_{0,T},  -1 )^T$ and the $\tau \times \tau$ covariance matrices $\sigb$ and $\sigf$ appear in Appendix~\ref{sec:covmat}.

Therefore, on combining equation~\eqref{eq:deflpacf3} with~\eqref{eq:mspeb} and~\eqref{eq:mspef}
we obtain as desired
	\begin{displaymath}
		\pT{z}{\tau} = \phisT \left\{ \frac{(\bb)^T\sigb\bb}{(\bf)^T\sigf\bf} \right\}^{1/2}.
	\end{displaymath}
\end{proof}

\subsection{Proof of Proposition \ref{prop:pTp}}\label{app:prop:pTp}
\begin{proof}
  The proof treats the convergence of $\phisT$ and the quotient that forms the square-root in
equation~\eqref{eq:deflpacf3} separately.  Firstly, we address the quotient convergence.

\underline{A: Quotient Convergence:}
By Proposition~3.1 from \cite{fryz03:forecasting} we have
\begin{displaymath}
  \mspeebA{X}{[zT],T} = (\bb)^T\sigb\bb = \left\{ (\bb)^T\Bb\bb\right\} \left\{ 1+\otone\right\}
  \end{displaymath}
  and
  \begin{eqnarray*}
	\mspeefA{X}{{[zT]+\tau,T}} &=& (\bf)^T\sigf\bf \\
		&=& \left\{ (\bf)^T\Bf\bf\right\}\left\{1+\otone\right\}.
\end{eqnarray*}
Hence
\begin{displaymath}
  \frac{\mspeebA{X}{[zT],T}}{\mspeefA{X}{[zT]+\tau,T}} =
\frac{\left\{ (\bb)^T\Bb\bb\right\}}{ \left\{ (\bf)^T\Bf\bf\right\}}  \left\{1+\otone\right\}
\end{displaymath}

\underline {B: Convergence of  $\phisT$}.
We defined $\phisT$ as the last element in  $\PhisT$ that is the solution to the
Yule-Walker equations $\sig\PhisT=\rhsT$.

Let $[\OTinv]$ be an appropriately-sized matrix whose elements are all $\OTinv$ and $\bm{\OTinv}$ be
a similarly defined vector.  

Now consider
\begin{align}
  \sig\left(\PhisT-\PPhis\right)&=\sig\PhisT - \sig\PPhis \nonumber\\
	&= \rhsT-\left(\BT+[\OTinv]\right)\PPhis \nonumber\\
	&= \rhsT-\rhs_{[zT]}-[\OTinv]\PPhis \tag{as $|c_T(z,\tau)-c(z,\tau)|=\OTinv$}\\
	&= \bm{\OTinv} - [\OTinv]\PPhis. \nonumber
\end{align}

Observe that $[\OTinv]\PPhis=\left(\sum_{j}\sum_i K_{j,i}\ph_{[zT],\tau,j}/T\right)\bm{1}$, where
$K_{j,i}$ is the $(j,i)$th constant, and in what follows we shall seek to bound this
quantity.

From the Cauchy-Schwarz inequality $\|\PPhis\|_1 \leq |\tau|^{1/2}\|\PPhis\|_2=C_{\tau}$ as $\tau$
is fixed, and by standard properties of the spectral norm
\begin{align*}
\|\PPhis\|_2^2 &= \PPhis^T\PPhis \\&\leq
\PPhis^T\sig\PPhis\|\sig^{-1}\|\\&=\mspeA{X}{{[zT]+\tau-1,T}}\|\sig^{-1}\|<\infty,
\end{align*}
as the spectral
norm $\|\sig^{-1}\|$ is bounded using Lemma A.3 from~\cite{fryz03:forecasting}.

Thus $[\OTinv]\PPhis=\bm{\OTinv}$ and it follows that
\begin{displaymath}
\sig\left(\PhisT-\PPhis\right)=\bm{\OTinv},
\end{displaymath}
which is equivalent to  $\PhisT-\PPhis=\sig^{-1}\bm{\OTinv}$.  By bounds of Rayleigh quotients
\citep[pg.181]{AbadirMagnus2005},
$\| \PhisT-\PPhis \|_2$ $=\|\sig^{-1}\bm{\OTinv}\|_2$ $\leq \mu^{1/2}\|\bm{\OTinv}\|_2$ $=
(\mu |\tau|)^{1/2}/T$ $=\OTinv$ as $\tau$ is fixed.  Here $\mu$ is the largest eigenvalue of
$\left(\sig^{-1}\right)^T\sig^{-1}$, i.e.
$\mu=\|\sig^{-1}\|^2$, and so $\mu<\infty$.  It follows that $\PhisT-\PPhis=\bm{\OTinv}$.

\underline{Putting parts A and B together:}
\begin{align*}
  \left|\pT{z}{\tau}-\p{z}{\tau}\right| &= \left| \phisT
\left\{ \frac{\mspeebA{X}{{[zT],T}}}{\mspeefA{X}{[zT]+\tau,T}} \right\}^{1/2} \right. \\
 & \left.  - \phis
\left\{ \frac{(\bb)^T\Bb\bb}{(\bf)^T\Bf\bf} \right\}^{1/2}\right| \\
	\begin{split}&= \left| \left\{\phis+\OTinv\right\} \right. \\
	& \left.
\left[ \frac{\left\{ (\bb)^T\Bb\bb\right\} \left\{ 1+\otone\right\}}{
\left\{ (\bf)^T\Bf\bf\right\} \left\{ 1+\otone\right\}} \right]^{1/2}  \right.\\ &\qquad \left.- \phis
\left\{\frac{(\bb)^T\Bb\bb}{(\bf)^T\Bf\bf} \right\}^{1/2} \right| \end{split}
\\
	\begin{split}&= \left|
\phis\left\{ \frac{(\bb)^T\Bb\bb}{(\bf)^T\Bf\bf)}\right\}^{1/2}\right.\\
& \left. \left(\left[ \frac{\left\{ 1+\otone\right\}} {
\left\{ 1+\otone\right\}} \right]^{1/2}-1\right) \right.\\ &\qquad \left.+ \OTinv
\left\{ \frac{\mspeebA{X}{[zT],T}}{\mspeefA{X}{[zT]+\tau,T}}\right\}^{1/2} \right| \end{split}
\\
	&= \OTinv.
\end{align*}
For the last equality the first term is asymptotically zero since
$\frac{\left\{1+\otone\right\}}{\left\{ 1+\otone\right\}} \to 1$ as $T \to \infty$,
$\phis<\infty$ and
\begin{displaymath}
\left\{ (\bb)^T \Bb \bb \right\}^{1/2} \left\{ (\bf)^T\Bf\bf \right\}^{-1/2} < \infty,
\end{displaymath}
 or, more concisely,
$\p{z}{\tau}<\infty$.  The second term  is
$\OTinv$ as each expectation is finite.  This concludes the
proof.
\end{proof}

\subsection{Proof of Proposition \ref{prop:ptildep}}\label{app:prop:ptildep}
\begin{proof}
First recall that we defined the local partial autocorrelation as
\begin{displaymath}
 \p{z}{\tau}=\ph_{[zT],\tau,\tau;T}\left\{\frac
{\mbox{Var} \{ X_{[zT],T}-P_{[zT],\tau}(X_{{[zT]},T}) \}}
{\mbox{Var}\{X_{{[zT]+\tau},T}-P_{[zT],\tau}(X_{{[zT]+\tau},T})\}}\right\}^{1/2},
\end{displaymath}
where the coefficient $\ph_{[zT],\tau,\tau;T}$ is obtained in a manner akin to the (stationary) partial
autocorrelation coefficient by expressing $X_{[zT]+\tau,T}$ as an $\mbox{AR}(\tau)$ process and
solving the associated Yule-Walker equations. The fraction under the square root quantifies the
ratio between the backward and forward variances associated to the $\mbox{AR}(\tau)$ process.
The Yule-Walker equations here are localized at the rescaled time $z$, in the sense that they
involve observations over the interval $\left[ [zT], [zT]+\tau \right]$.

Recall that, in estimating the local partial autocorrelation, we use the
$\hat{c}(z,\tau)$
estimator of \cite{nason00:wavelet}, which was shown there to be consistent for the (true) local
autocovariance ${c}(z,\tau)$. By the classical stationary theory, it follows that the estimated
Yule-Walker coefficients of the $\mbox{AR}(\tau)$ process (solution vector to the local
Yule-Walker equations) are consistent estimators of the true coefficients, hence
$\tilde{\ph}_{[zT],\tau,\tau;T} \overset{P}{\longrightarrow} \ph_{[zT],\tau,\tau}$, and the forward
and backward variances are also estimated consistently.

Using the continuous mapping theorem \citep{billingsley1999convergence} and assuming that the
variance is non-zero, it follows that the
square-root of the ratio of estimated backward and forward variances
\begin{displaymath}
\left\{\frac{(\bbtilde)^T\Bbtilde\bbtilde}{(\bftilde)^T\Bftilde\bftilde}\right\}^{1/2}
\end{displaymath}
is a consistent estimator
of the true ratio of variances
\begin{displaymath}
\left\{\frac{(\bb)^T\Bb\bb}{(\bf)^T\Bf\bf} \right\}^{1/2}.
\end{displaymath}
This together with the
consistency of $\tilde{\ph}_{[zT],\tau,\tau;T}$, yields
\begin{equation}
\begin{split}
\tilde{\ph}_{[zT],\tau,\tau;T}\left\{ \frac{(\bbtilde)^T\Bbtilde\bbtilde}{
(\bftilde)^T\Bftilde\bftilde } \right\}^{1/2} \overset{P}{\longrightarrow}\\ \phis
\left\{\frac{(\bb)^T\Bb\bb}{(\bf)^T\Bf\bf}\right\}^{1/2}.
\end{split}
\end{equation}
\end{proof}

\section{Proofs from Section~\ref{sec:3}.}\label{app:sec3proofs}

\subsection{Cross-scale autocorrelation Haar wavelets}

\subsubsection{Proof of Proposition \ref{prop:psilj}}\label{suppmat:proof:psilj}

\begin{proof}
For completeness, the definition of
(continuous-time) Haar wavelets is
\begin{equation}\nonumber
\psi_H (x) = \begin{cases}
	-1 & \mbox{if } 0 \leq x < 1/2,\\
	1 & \mbox{if } 1/2 \leq x < 1,\\
	0 & \mbox{otherwise.}
	\end{cases}
\end{equation}
 \cite{nason00:wavelet} show that $\Psi_j (\tau) = \Psi_H (2^{-j} | \tau
|)$ where
$\Psi_j(\tau)$ is the regular discrete autocorrelation wavelet and $\Psi_H(u)$ is the continuous
Haar autocorrelation
wavelet given by
\begin{equation}\nonumber
\Psi_H (u) = \int_{-\infty}^\infty \psi_H (x) \psi_H (x-u) \, dx =\
\begin{cases}
1 - 3 |u| & \mbox{for } |u| \in [0, \frac{1}{2}],\\
|u| - 1 & \mbox{for } |u| \in (\frac{1}{2}, 1].
\end{cases}
\end{equation}
Hence, we can derive the following integral equation for $\Psi_j (\tau)$ for $\tau \geq 0$:
\begin{equation}\nonumber
\Psi_j (\tau) = \Psi_H (2^{-j} \tau)
= \int_{-\infty}^\infty \psi_H(x) \psi_H(x - 2^{-j} \tau) \, dx
\end{equation}
from the definition of $\Psi_H(u)$. Then make the substitution $x = 2^{-j}y$ to obtain:
\begin{align*}
\Psi_j (\tau) &= \int_{-\infty}^\infty  \psi_H ( 2^{-j} y)  \psi_H (2^{-j} y - 2^{-j} \tau) \,
2^{-j} \, dy\\
&= \int_{-\infty}^\infty  2^{-j/2} \psi_H( 2^{-j} y) \, 2^{-j/2} \psi_H  \{ 2^{-j} (y - \tau) \} \,
dy\\
&=\int_{-\infty}^\infty  \psi_{j, 0}(y) \psi_{j, 0} ( y- \tau) \, dy,
\end{align*}
where $\psi_{j, 0} (y) = 2^{-j/2} \psi_H (2^{-j}y)$.

Hence, by a similar argument it is the case that
\begin{equation}\nonumber
\Psi_{j, \ell} (\tau) = \int_{-\infty}^\infty  \psi_{j, 0}(y) \psi_{\ell, 0} ( y- \tau) \, dy.
\end{equation}

For Haar wavelets, since we know the precise form of $\psi_H$ we should be able to obtain an
analytical formula
for $\Psi_{j, \ell}$. To do this we consider $\ell < j$ and see that
\begin{equation}\nonumber
\Psi_{j, \ell} (\tau) = \int_{-\infty}^\infty  2^{-j/2} \psi_H (2^{-j} y) 2^{-\ell/2} \psi_H \{
2^{-\ell} (y - \tau) \} \, dy.
\end{equation}
Now let $x = 2^{-\ell} y$ and we obtain
\begin{align*}
\Psi_{j, \ell} (\tau) &= \int_{-\infty}^\infty  2^{-j/2} \psi_H \{ 2^{-(j-\ell)} x \} 2^{\ell/2}
\psi_H(x - 2^{-\ell} \tau) \, dx\\
&= \int_{-\infty}^\infty  \psi_{j - \ell} (x) \psi(x - 2^{-\ell} \tau) \, dx.
\end{align*}
Hence, it makes sense to introduce the following {\em core} function:
\begin{equation}\nonumber
\Omega_i (u) = \int_{-\infty}^\infty  \psi_i (x) \psi (x- u) \, dx,
\end{equation}
for integers $i=0, 1, 2, \ldots$.
Clearly,
\begin{align}
\Psi_{j, \ell} (\tau) = \Omega_{j - \ell} ( 2^{-\ell} \tau),\label{eq:psiinomega}
\end{align}
for $\ell < j$. Also $\Omega_0 (u) = \Psi_H (u)$.

Using Lemma~\ref{lem:omegai} and \eqref{eq:psiinomega}, we can now specify an exact formula for
$\Psi_{j, \ell}(\tau)$. For $\ell < j$ the result is shown in~\eqref{eq:HaarXCorr}. Corollary~\ref{corr:psiljREMAIN}
shows the formula for $\ell > j$.

Proposition~\ref{prop:psilj} shows that the support of the cross-correlation wavelet is
$\{ k \in \ints: -2^{\ell} <= k < 2^j \}$ for $\ell < j$.
\end{proof}

\subsection{Subsidiary result used in the proof of Lemma~\ref{lem:boundi}}
\label{suppmat:intres}

\subsubsection{Proof of Lemma \ref{lem:intres}}\label{proof:lem:intres}

\begin{proof}
The result is obtained by combining known results on the Fej\'{e}r and Dirichlet
kernels as follows. The Fej\'{e}r kernel can be defined by:
\begin{equation}\nonumber
F_n (\omega) = \frac{ \sin^2 ( n \omega/ 2) }{2\pi n \sin^2 (\omega/2) } = \frac{ 1 - \cos(n
\omega)}{2\pi n \{ 1 - \cos(\omega) \}},
\end{equation}
for $\omega \in [-\pi, \pi]$, see \cite{walter00:wavelets} Section~4.2, for example. The Fej\'{e}r kernel
can
also be written in the following alternative form
\begin{equation}\label{eq:Fej2Dir}
F_n (\omega)  = \frac{1}{n} \sum_{k=0}^{n-1} D_k (\omega),
\end{equation}
where $D_k (\omega) = \pi^{-1} \left( \frac{1}{2} + \sum_{p=1}^k \cos p \omega \right)$ is the
Dirichlet kernel,
see Section 1.2.1 of \cite{walter00:wavelets}.

Let the integral on the left-hand side of~\eqref{eq:I1} be $I1$.
Then:
\begin{align*}
I1 &= 2\pi  \int_{-\pi}^\pi \{ 1 - \cos (2a \omega) \} 2b  F_{2b} (\omega) \, d\omega\\
&= 4\pi b \left\{ \int_{-\pi}^\pi F_{2b} (\omega) \, d\omega - \int_{-\pi}^\pi \cos(2 a \omega)
F_{2b} (\omega) \, d\omega \right\}\\
&= 4\pi b \left\{ 1 - \int_{-\pi}^\pi \cos(2a \omega)  (2b)^{-1} \sum_{k=0}^{2b-1} D_k (\omega) \,
d\omega \right\},
\end{align*}
by substituting~\eqref{eq:Fej2Dir} and
since $\int_{-\pi}^\pi F_n(\omega) \, d\omega = 1$.
Then
\begin{align}\nonumber
I1 &= 4\pi b \left\{ 1 - (2b)^{-1} \sum_{k=0}^{2b-1} \int_{-\pi}^\pi \cos(2a \omega) D_k (\omega)
\, d\omega
	\right\} \nonumber\\
&= 4\pi b \{ 1- (2b)^{-1} \sum_{k=0}^{2b-1} I2_k \}, \label{eq:cD2}
\end{align}
where $I2_k = \int_{-\pi}^\pi D_k (\omega) \cos( 2a \omega) \, d\omega$.
Now,
\begin{align}
I2_k	&= \pi^{-1} \int_{-\pi}^\pi \left( \frac{1}{2} + \sum_{p=1}^k \cos p \omega \right)
		\cos(2 a \omega) \, d\omega \nonumber\\
		&=(2  \pi)^{-1} \int_{-\pi}^\pi \cos(2a \omega) \, d\omega +
			\pi^{-1} \sum_{p=1}^k \int_{-\pi}^\pi \cos (p\omega) \cos (2a \omega)
			\, d\omega\label{eq:ccint}\\
		&= (2\pi)^{-1} \left[ \frac{ \sin(2a \omega) }{2a} \right]_{-\pi}^\pi
			+ \pi^{-1} \sum_{p=1}^k
			\left[ \frac{2 a \cos (p \omega) \sin (2 a\omega) - p \cos( 2a \omega)
				\sin(p \omega) }{4a^2  - p^2} \right]_{-\pi}^\pi \nonumber\\
		&= 0,
\end{align}
for $p^2 \neq 4a^2$ for $p \neq 2a$ (and recall $a>0$).
For $p=2a$ the integral in~\eqref{eq:ccint} is
\begin{equation}
\int_{-\pi}^\pi \cos^2 ( 2 a \omega) \, d\omega = \pi + \sin( 4 a \pi)/4a = \pi,
\end{equation}
since $2a \in \nats$. Hence,
\begin{equation}
\label{eq:I2k}
I2_k = \begin{cases}
0 & \mbox{for } k < 2a\\
1 & \mbox{for } k \geq 2a.
\end{cases}
\end{equation}
Hence, substituting~\eqref{eq:I2k} into~\eqref{eq:cD2} gives, for $b > a$
\begin{equation}
I1= 4\pi b \{ 1 - (2b)^{-1} (2b - 2a) \}\\
= 4\pi b - 2\pi  (2b - 2a)\\
= 4 \pi a.
\end{equation}
Since the integral~\eqref{eq:I1} is symmetric in $a$ and $b$ we also have
$I1 = 4 \pi b$ for $b \leq a$. Hence, the result in equation (\ref{eq:I1}) follows.
\end{proof}

\subsection{Proof of Lemma~\ref{lem:boundi}}
\label{app:boundi}

\begin{proof}
 It is obvious that inequality~\eqref{eq:maineq} holds when $i_{N, z} (j, \ell, k) = 0$. This occurs
when the lower limit in the sum~\eqref{eq:iNz}, plus the extra $k - 2[zT] + N/2-1$ exceeds the
support of $\psi_{\ell, \cdot}$. In other words, $i_{N, z} (j, \ell, k) = 0$ when:
\begin{align}
 & [zT] - N + 1 + k - 2[zT] + N/2 - 1 > N_\ell - 1 \nonumber\\
&\implies  k - [zT] -N/2  > N_\ell - 1 \nonumber\\
&\implies k > [zT] + N/2 + N_\ell - 1 = b_2.
\end{align}
It can also be shown that $i_{N,z} (j, \ell, k) = 0$ when $k < [zT] -N/2 + 1$ but this inequality is
not of interest in this proof
.

For the inequalities in~\eqref{eq:b1} we decompose $\Psi$ into three terms:
\begin{equation}
\label{eq:psiineq2}
\Psi_{j, \ell} (k - 2[zT] + N/2 - 1) = L + i_{N, z} (j, \ell, k) + U,
\end{equation}
where
\begin{equation}
\label{eq:sumL}
L = \sum_{s = -\infty}^{[zT] - N} \psi_{j, s} \psi_{\ell, s+k - 2[zT] + N/2-1}
\end{equation}
and
\begin{equation}
\label{eq:sumU}
U = \sum_{s=[zT]+1}^\infty \psi_{j, s} \psi_{\ell, s+k - 2[zT] + N/2-1}.
\end{equation}
Clearly, the inequality~\eqref{eq:maineq} is satisfied when $i_{N,z} (j, \ell, k) = \Psi_{j, \ell}
(k - 2[zT] + N/2 - 1)$ which
occurs when $L = U = 0$. We now investigate the conditions when $L=U=0$.

(A) When is $U=0$? When the lower limit of the sum defining $U$ in~\eqref{eq:sumU} exceeds the
support of
$\psi_{j, \cdot}$, i.e.\
\begin{equation}
\label{eq:A1}
[zT] + 1 > N_j - 1 \implies [zT] > N_j - 2,
\end{equation}
or when the lower limit exceeds the support of $\psi_{\ell, \cdot}$, i.e.\
\begin{equation}
\label{eq:A2}
[zT] + 1 + k  - 2[zT] + N/2 -1 > N_\ell - 1 \implies k > [zT] - N/2 + N_\ell - 1.
\end{equation}

(B) When is $L=0$? When the upper limit of the sum defining $L$ in~\eqref{eq:sumL} is less than the
lower support
bound of $\psi_{j, \cdot}$, which is zero, i.e.\
\begin{equation}
\label{eq:B1}
[zT] - N  < 0 \implies [zT] < N,
\end{equation}
or when the upper limit is less than the support of $\psi_{\ell, \cdot}$, i.e.\
\begin{equation}
\label{eq:B2}
[zT] - N + k - 2[zT] + N/2 - 1  < 0 \implies   k < [zT] + N/2 + 1 = b_1.
\end{equation}

Hence, $U=L=0$ when inequalities~\eqref{eq:A1} and~\eqref{eq:B2} are satisfied. Note: we are not
particularly interested in inequalities~\eqref{eq:A2} and~\eqref{eq:B1}. For the former, the
inequality~\eqref{eq:A2} would have to be allied with~\eqref{eq:B1} (as \eqref{eq:B2} would be
contradictory to~\eqref{eq:A2}) and, asymptotically \eqref{eq:B1} will not hold (as we expect the
rate of increase of $T$ to be much bigger than $N$).

So far we have demonstrated the Lemma up to inequalities~\eqref{eq:b1} and~\eqref{eq:b2} and now
we look to establish the second part of the Lemma.

To establish~\eqref{eq:part3} it can be shown that, for Daubechies' wavelets with two or more
vanishing moments,
\begin{align}
|i_{N, z} (j, \ell, k) | &\leq 2^{-(j+\ell)/2} K^2 \sum_{s=[zT]-N+1}^{[zT]} s^{-1} (s - [zT] +
N)^{-1}\label{eq:line1} \\
&= 2^{-(j+\ell)/2} \left( \mathcal{H}_{N} - \mathcal{H}_{[zT]} + \mathcal{H}_{[zT] - N}
\right)/([zT] - N)\nonumber,
\end{align} 
where $\mathcal{H}_n$ is the $n$th Harmonic number, and $K$ is a constant (maximum absolute value
of the
wavelet).
Now using the following approximation for $\mathcal{H}_n$
\begin{equation}\nonumber
\mathcal{H}_n = \log n + \gamma + \calO(n^{-1}),
\end{equation}
where $\gamma$ is the Euler-Mascheroni constant, we can obtain the result in~\eqref{eq:part3}.

Now we consider Haar wavelets. First, let us recall what the discrete Haar wavelet is. We have
\begin{equation}\nonumber
\psi_{j, k} =
\begin{cases}
2^{-j/2} & \mbox{for } 0 \leq k < N_j /2,\\
-2^{-j/2} & \mbox{for } N_j/2 \leq k < N_j,\\
0 & \mbox{otherwise.}
\end{cases}
\end{equation}
For Haar wavelets $N_j = 2^j$ for $j \in \nats$. Next we will require the discrete Fourier transform
of the Haar wavelet given by:
\begin{equation}\nonumber
\hat{\psi}_j (\omega) = \sum_{s= -\infty}^{\infty} \psi_{j, s} e^{-i\omega s},
\end{equation}
for $\omega \in (-\pi, \pi)$.
The inverse of this transform is:
\begin{equation}
\label{eq:fourinv}
\psi_{j, s} = ( 2\pi)^{-1} \int_{-\pi}^\pi \hat{\psi}_j (\omega) e^{i \omega s} d\omega
\end{equation}
for $s \in \ints$.

Now let us work out the precise form of the Fourier transform of the discrete Haar wavelet:
\begin{align*}
\hat{\psi}_j (\omega) &= 2^{-j/2} \left( \sum_{s=0}^{N_j /2 - 1} e^{-i\omega s} - \sum_{s =
N_j/2}^{N_j - 1} e^{-i\omega s} \right)\\
&= 2^{-j/2} \left\{ \sum_{s=0}^{N_j /2 - 1} e^{-i\omega s} - \sum_{s=0}^{N_j/2 - 1} e^{-i \omega(s
+ N_j /2)} \right\}\\
&= 2^{-j/2} \sum_{s=0}^{N_j/2 - 1} e^{-i\omega s} \left( 1 - e^{-i\omega N_j/2} \right)\\
&= 2^{-j/2} \left( 1 - e^{-i\omega N_j/2} \right) \sum_{s=0}^{N_j/2 - 1} e^{-i\omega s} \\
&= 2^{-j/2}  \left( 1 - e^{-i\omega N_j/2} \right)  \frac{ 1 - \exp ( -i\omega N_j /2 ) }{1 - \exp(
-i \omega) }\\
&= 2^{-j/2} \frac{ \left( 1 - e^{-i\omega N_j/2} \right)^2}{1 - \exp( -i \omega) }
\end{align*}

We now directly examine formula~\eqref{eq:iNzalt} with a rectangular kernel, as discussed in the main body of the paper. To simplify notation, we let $B = [zT] - N$ and $r=k - 2[zT] +N/2 - 1$.
In~\eqref{eq:iNzalt} replace the discrete wavelets $\psi_{j,s}$ and $\psi_{\ell, s+r}$ by their
Fourier inverse representations
given by~\eqref{eq:fourinv} to obtain:
\begin{align}
i_{N, z} (j, \ell, k) &= \sum_{s=B+1}^{B+N} (2\pi)^{-2} \int_{-\pi}^\pi \int_{-\pi}^\pi
\hat{\psi}_j (\omega) e^{i\omega s}
	\hat{\psi}_\ell (\nu) e^{i\nu (s+r)} d\omega d\nu\label{eq:returntoi}\\
	&= (2\pi)^{-1} \int_{-\pi}^\pi \hat{\psi}_\ell (\nu) e^{i \nu r} \left\{
		(2\pi)^{-1}\times \right.\\
		& \left. \hspace{3cm} \int_{-\pi}^\pi \hat{\psi}_j (\omega) \sum_{s=B+1}^{B+N} e^{i(\omega +\nu) s}
d\omega \right\} d\nu
			\\
		&= (2\pi)^{-1} \int_{-\pi}^\pi \hat{\psi}_\ell (\nu) e^{i \nu r} \left\{
		(2\pi)^{-1} \times \right. \\
		& \left.\hspace{3cm} \int_{-\pi}^\pi \hat{\psi}_j (\omega) e^{i(\omega +\nu)(B+1)}
			\frac{ 1 - e^{i(\omega+\nu)N}}{1 - e^{i(\omega+\nu)}} d\omega \right\} d\nu\nonumber\\
		&=(2\pi)^{-1} \int_{-\pi}^\pi \hat{\psi}_\ell (\nu) e^{i \nu r} G_{j, B, N} (\nu) d\nu,
\end{align}
where
\begin{align*}
G_{j, B, N} (\nu) &= (2\pi)^{-1} \int_{-\pi}^\pi \hat{\psi}_j (\omega) e^{i(\omega +\nu)(B+1)}
			\frac{ 1 - e^{i(\omega+\nu)N}}{1 - e^{i(\omega+\nu)}} d\omega\\
			&= \sum_{s= B+1}^{B+N} e^{i \nu s} \psi_{j, s}\\
			&= \hat{\psi}_j (\nu) - \sum_{s=-\infty}^B \psi_{j, s} e^{i\nu s} -
\sum_{s=B+N+1}^\infty \psi_{j,s} e^{i\nu s}.
\end{align*}
We now examine what happens to $G_{j, B, N}(\nu)$ under four different cases depending on how the
support of
the wavelet $\psi_{j, s}$ overlaps the interval $[B+1, B+N]$ or not. Note: the support of the
wavelet
is the interval $[0, N_j - 1]$. Note: we are mostly interested in the situation when $T, N$ are
large and hence not so
interested in a potential fifth case when $[B+1, B+N] \subseteq [0, N_j-1]$.

Case-I: Suppose $[0, N_j - 1] \subseteq [B+1, B+N]$. That is, the support of the wavelet lies
entirely within
the interval $[B+1, B+N]$. Then
\begin{equation}
G_{j, B, N} (\nu) = \sum_{s= B+1}^{B+N} e^{i \nu s} \psi_{j, s} = \sum_{s=0}^{N_j - 1} \psi_{j, s}
e^{i \nu s}
	= \hat{\psi}_j (\nu). \label{eq:GjBN1}
\end{equation}

Case-II: Suppose $0 < B+1$ but $B+1 < N_j - 1$, that is right-hand end of the wavelet support
overlaps
$[B+1, B+N]$ but the left-hand end does not. Then:
\begin{align}
G_{j, B, N} (\nu) &= \sum_{s=B+1}^{N_j - 1} \psi_{j,s} e^{i\nu s} = \hat{\psi}_j (\nu) -
\sum_{s=0}^B \psi_{j,s} e^{i \nu s}\\
&= \hat{\psi}_j (\nu) -  G_{j, B} (\nu),\label{eq:case2}
\end{align}
where
\begin{equation}
\label{eq:GjB}
G_{j, B} (\nu) = \sum_{s=0}^B \psi_{j,s} e^{i \nu s}.
\end{equation}

Case-III: Suppose $[0, N_j - 1]$ and $[B, B+N]$ do not overlap. Then $G_{j, B, N} (\nu) = 0$.

Case-IV: Suppose that $0 < B+N$ and $B+N < N_j - 1$, that is the left-hand end of the wavelet
support
overlaps $[B+1, B+N]$ but the right-hand end does not. And $B+1 < 0$ which is not of interest
as it means that $[zT] - N + 1 < 0$ which should not happen, for large $T$, as $T$ increases faster
than $N$.

Now let us derive $G_{j, B} (\nu)$ from~\eqref{eq:GjB} for $B> 0$:
\begin{equation}
\label{eq:GjB2}
G_{j, B} (\nu) =
\begin{cases}
 2^{-j/2} \sum_{s=0}^B e^{i \nu s} & \mbox{if } 0 \leq B \leq N_j/2 - 1,\\
 2^{-j/2} \left( \sum_{s=0}^{N_j/2 - 1} e^{i\nu s} - \sum_{s= N_j/2}^B e^{i\nu s} \right) & \mbox{if
} N_j/2-1 < B \leq N_j - 1,\\
  2^{-j/2} \sum_{s=0}^{N_j/2-1} \left\{ e^{i \nu s} - e^{i\nu (s + N_j/2) } \right\} & \mbox{if }
N_j-1 < B.
\end{cases}
\end{equation}
Computing the sums in~\eqref{eq:GjB2} gives:
\begin{equation}
\label{eq:GjB3}
G_{j, B} (\nu) = \frac{2^{-j/2}}{ \{ \exp(i\nu) - 1 \} }
\begin{cases}
 \exp \{ i \nu (B+1) \} - 1, & \text{for $B \in S_1$,}\\
 2\exp( i\nu N_j / 2) -\exp \{ i\nu (B+1) \} - 1, & \text{for $B \in S_2$},\\
  \{ 1 - \exp( i\nu N_j/2) \} \{ \exp(i\nu N_j/2) - 1\}, & \text{for $B \in S_3$},
\end{cases}
\end{equation}
where $S_1 = \{ B: 0 \leq B \leq N_j/2 - 1 \},
S_2 = \{ B: N_j/2 - 1 < B \leq N_j -1\},
S_3 = \{ B: N_j -1 < B \}$.

Now returning to the main formula~\eqref{eq:returntoi}.

Case-I: suppose $[0, N_j - 1] \subseteq [B+1, B+N]$ then $G_{j, B, N} (\nu) = \hat{\psi}_j(\nu)$ as
given
by~\eqref{eq:GjBN1}. Hence, substituting into~\eqref{eq:returntoi} gives:
\begin{align}
i_{N, z} (j, \ell, k) &= (2\pi)^{-1} \int_{-\pi}^\pi \hat{\psi}_\ell (\nu) e^{i \nu r} G_{j, B,
N}(\nu) \, d\nu\\
&= (2\pi)^{-1} \int_{-\pi}^\pi 2^{-\ell/2} \frac{ ( 1 - e^{-i \nu N_\ell/2})^2}{1 - e^{-i\nu}}
e^{i\nu r} 2^{-j/2}
	\frac{ ( 1- e^{-i \nu N_j /2} )^2}{1 - e^{-i \nu}} \, d\nu\nonumber\\
	&=  \frac{2^{-(j+\ell)/2}}{2\pi} \int_{-\pi}^\pi e^{i \nu r} \frac{ ( 1 - e^{-i \nu
N_\ell/2})^2 ( 1- e^{-i \nu N_j /2} )^2}{ (1 - e^{-i \nu})^2} \, d\nu \label{eqn:CaseIpremod}
\end{align}
We now bound $i_{N, z}(j, \ell, k)$ by the integral of the absolute value of its integrand, i.e.\
\begin{align*}
| i_{N, z} (j, \ell, k) | & \leq  \frac{2^{-(j+\ell)/2}}{2\pi} \int_{-\pi}^\pi
	\frac{ 2 \{ 1- \cos (N_\ell \nu/2) \} 2 \{ 1 - \cos(N_j \nu/ 2) \} }{ 2 \{1 - \cos (\nu) \} } \,
d\nu\nonumber \\
	&=  \frac{ 2^{1 -(j+\ell)/2}}{2\pi} \int_{-\pi}^\pi
	\frac{ \{ 1- \cos (N_\ell \nu/2) \}  \{ 1 - \cos(N_j \nu/ 2) \} }{  1 - \cos (\nu)  } \, d\nu,
\end{align*}
because $| e^{i\nu r}| = 1$, $| (1 - e^{-i\nu} )|^2 = 2\{ 1 - \cos (\nu) \}$, and so on. Using
Lemma~\ref{lem:intres}
with $a = N_\ell/4, b = N_j/4$ gives
\begin{align}
| i_{N, z} (j, \ell, k) | &\leq  2^{-(j+\ell)/2} \min(N_\ell, N_j)\\
&= \begin{cases}
2^{ - (j - \ell)/2} & \mbox{ for $\ell \leq j$},\\
2^{ - (\ell-j)/2} & \mbox{ for $\ell > j$},
\end{cases}\label{eq:case1end}
\end{align}
as $N_j = 2^j$ for Haar wavelets.

Case-IIa. Consider the case when $0 \leq B \leq N_j/2 - 1$. From~\eqref{eq:case2} we have $G_{j, B,
N} (\nu) = \hat{\psi}_j (\nu) - G_{j, B} (\nu)$. Hence:

\begin{align*}
i_{N, z} (j, \ell, k) &\leq \frac{1}{2\pi}
\int_{-\pi}^{\pi} \hat{\psi}_{\ell}(\nu) e^{i\nu r}G_{j,B,N}(\nu)\, d\nu \\
&=\frac{1}{2\pi}
\int_{-\pi}^{\pi} \hat{\psi}_{\ell}(\nu) e^{i\nu r}\left[\hat{\psi}_j(\nu)-G_{j,B}(\nu)\right]\,
d\nu \\
&=\frac{1}{2\pi}
\int_{-\pi}^{\pi} \hat{\psi}_{\ell}(\nu) e^{i\nu r}\hat{\psi}_j(\nu)\, d\nu \\
&  -  \frac{1}{2\pi}
\int_{-\pi}^{\pi} \hat{\psi}_{\ell}(\nu) e^{i\nu
r}\frac{2^{j/2}}{e^{i\nu}-1}\left[e^{i\nu(B+1)}-1\right]\, d\nu \\
&= \eqref{eqn:CaseIpremod} - \frac{2^{-(j+\ell)/2}}{2\pi}
\int_{-\pi}^{\pi} \frac{\left(1-e^{-i\nu N_\ell/2}\right)^2\left(1-e^{i\nu(B+1)}\right)e^{i\nu
r}}{\left(1-e^{i\nu}\right)\left(1-e^{-i\nu}\right)}\, d\nu \\
&\leq \eqref{eqn:CaseIpremod} - \frac{2^{-(j+\ell)/2}}{2\pi}
\int_{-\pi}^{\pi} \frac{\left(1-e^{-i\nu N_\ell/2}\right)^2\left(1-e^{i\nu N_j/2}\right)e^{i\nu
r}}{\left(1-e^{i\nu}\right)\left(1-e^{-i\nu}\right)}\, d\nu
\end{align*}
We now bound $i_{N, z}(j, \ell, k)$ by the integral of the absolute value of its integrand, i.e.\
\begin{align*}
| i_{N, z} (j, \ell, k) | &\leq \eqref{eq:case1end} + \frac{2^{-(j+\ell)/2}}{2\pi}
\int_{-\pi}^{\pi} \frac{|\left(1-e^{-i\nu N_\ell/2}\right)^2||\left(1-e^{i\nu
N_j/2}\right)||e^{i\nu
r}|}{|\left(1-e^{i\nu}\right)\left(1-e^{-i\nu}\right)|}\, d\nu \\
&\leq \eqref{eq:case1end} + \frac{2^{-(j+\ell)/2}}{2\pi}
\int_{-\pi}^{\pi} \frac{2\left(1-\cos(\nu N_\ell/2)\right) 2}{2\left(1-\cos(\nu)\right)}\, d\nu \\
& = \eqref{eq:case1end} +  \frac{2^{-(j+\ell)/2}}{\pi} \int_{-\pi}^{\pi} 2\pi (N_\ell / 2)
F_{N_\ell/2} (\nu) \, d\nu\\
& = \eqref{eq:case1end} + 2^{-(j+\ell)/2} N_\ell\\
&= 2^{-(j+\ell)/2} \left(\min(N_\ell, N_j)+N_\ell \right),
\end{align*}

Case-IIb. Consider the case when $N_j/2-1 < B \leq N_j - 1$. Again using~\eqref{eq:case2} we have
$G_{j, B, N} (\nu) = \hat{\psi}_j (\nu) - G_{j, B} (\nu)$ and from the corresponding value
of~\eqref{eq:GjB3}, we obtain (based on the same logic as above in Cases I and IIa):

\begin{align*}
i_{N, z} (j, \ell, k) &\leq \frac{1}{2\pi}
\int_{-\pi}^{\pi} \hat{\psi}_{\ell}(\nu) e^{i\nu r}G_{j,B,N}(\nu)\, d\nu \\
&=\frac{1}{2\pi}
\int_{-\pi}^{\pi} \hat{\psi}_{\ell}(\nu) e^{i\nu r}\left[\hat{\psi}_j(\nu)-G_{j,B}(\nu)\right]\,
d\nu \\
&=\eqref{eqn:CaseIpremod} - \frac{1}{2\pi} \int_{-\pi}^{\pi} \hat{\psi}_{\ell}(\nu) e^{i\nu
r}\frac{2^{j/2}}{e^{i\nu}-1}\left[2e^{i\nu N_j/2}-e^{i\nu(B+1)}-1\right]\, d\nu \\
&\leq \eqref{eqn:CaseIpremod} - \frac{1}{2\pi} \int_{-\pi}^{\pi} \hat{\psi}_{\ell}(\nu) e^{i\nu
r}\frac{2^{j/2}}{e^{i\nu}-1}\left[2e^{i\nu N_j/2}-e^{i\nu N_j}-1\right]\, d\nu \\
&= \eqref{eqn:CaseIpremod}\\
& +  \frac{2^{-(j+\ell)/2}}{2\pi}
\int_{-\pi}^{\pi} \frac{\left(1-e^{-i\nu N_\ell/2}\right)^2\left(2e^{i\nu N_j/2}-1-e^{i\nu
N_j}\right)e^{i\nu
r}}{\left(1-e^{i\nu}\right)\left(1-e^{-i\nu}\right)}\, d\nu
\end{align*}
We now bound $i_{N, z}(j, \ell, k)$ by the integral of the absolute value of its integrand, i.e.\
\begin{align*}
| i_{N, z} (j, \ell, k) | &\leq \eqref{eq:case1end}\\
& + \frac{2^{-(j+\ell)/2}}{2\pi}
\int_{-\pi}^{\pi} \frac{|\left(1-e^{-i\nu N_\ell/2}\right)^2||\left(1-2e^{i\nu
N_j/2}+e^{i\nu N_j}\right)||e^{i\nu
r}|}{|\left(1-e^{i\nu}\right)\left(1-e^{-i\nu}\right)|}\, d\nu \\
&\leq \eqref{eq:case1end} + \frac{2^{-(j+\ell)/2}}{2\pi}
\int_{-\pi}^{\pi} \frac{2\left(1-\cos(\nu N_\ell/2)\right) 2}{2\left(1-\cos(\nu)\right)}\, d\nu \\
& = \eqref{eq:case1end} +  \frac{2^{1-(j+\ell)/2}}{\pi} \int_{-\pi}^{\pi} \pi (N_\ell / 2)
F_{N_\ell/2} (\nu) \, d\nu\\
& = \eqref{eq:case1end} + 2^{-(j+\ell)/2} N_\ell\\
&= 2^{-(j+\ell)/2} \left(\min(N_\ell, N_j)+N_\ell \right),
\end{align*}

Case-III. Clearly, $| i_{N, z} (j, \ell, k) | \leq 0 \leq \eqref{eq:case1end}$ and Case-IV does not
apply.
\end{proof}

\subsection{Proof of Lemma~\ref{lem:order2l}}
\label{app:order2I}\label{app:orderlpm}
This lemma has two parts, hence we next prove the first part. 
\begin{proof}
  In what follows we use the two bounds~\eqref{eq:maineq} for $k < b_1$ and $k > b_2$ and~\eqref{eq:otherkbound} for
$b_1 \leq k \leq b_2$ from Lemma~\ref{lem:boundi}. Denote ${\cal B}= [b_1, b_2]$ and express
  \begin{align*}
    \sum_{k=-\infty}^\infty \sum_{j=1}^{\infty} \av i_{N,z}(j, \ell, k) \av^2 &=
\sum_{k\in{\cal B}}\sum_{j=1}^{\infty} \av i_{N,z}(j, \ell, k) \av^2 + \sum_{k\in\not{\cal
B}}\sum_{j=1}^{\infty} \av i_{N,z}(j, \ell, k) \av^2.
  \end{align*}
  For the first sum:
  \begin{align*}
    \sum_{k\in{\cal B}}\sum_{j=1}^{\infty} \av i_{N,z}(j, \ell, k) \av^2 &\leq \sum_{k\in{\cal
B}}\sum_{j=1}^{\infty} 2^{-(j+\ell)}\left(\min(N_\ell,N_j)+N_\ell\right)^2 \\
  &= \sum_{k\in{\cal B}}\left\{\sum_{j=1}^{\ell-1} 2^{-(j+\ell)}\left(N_j+N_\ell\right)^2 +
\sum_{j=\ell}^{\infty} 2^{-(j+\ell)}\left(2N_\ell\right)^2 \right\} \\
  &= 2^{-\ell}\sum_{k\in{\cal B}}\left\{\sum_{j=1}^{\ell-1} 2^{-j}\left(2^{2j} +
2^{2\ell}+2^{1+j+\ell}\right) +
4\sum_{j=\ell}^{\infty} 2^{-j}2^{2\ell} \right\} \\
  &= 2^{-\ell}\sum_{k\in{\cal B}}\left\{\sum_{j=1}^{\ell-1} 2^{j} +
2^{2\ell}\sum_{j=1}^{\ell-1}2^{-j} \right.\\
& \hspace{4cm} \left. + 2^{1+\ell}\sum_{j=1}^{\ell-1} 1 +
2^{2+2\ell}\sum_{j=\ell}^{\infty} 2^{-j} \right\} \\
  &= 2^{-\ell}\sum_{k\in{\cal B}}\left\{ 2^{\ell}-2+2^{2\ell}\left(1-2^{-\ell+1}\right) \right.\\
  & \hspace{4cm} \left. +
\left(\ell-1\right) 2^{1+\ell} + 2^{2+2\ell}2^{-\ell+1}\right\} \\
  &= \left(N_\ell-1\right)2^{-\ell}\left\{ \left(2\ell+5\right)2^{\ell} -2 +2^{2\ell}\right\} \\
  &= \left(N_{\ell}-1\right)\left( 2\ell+5-2^{-\ell+1}+2^{\ell}\right) \\
  &= \mathcal{O} \left(2^{2\ell}\right).
  \end{align*}
  For the second sum:
  \begin{align*}
    \sum_{k\in\not{\cal B}}\sum_{j=1}^{\infty} \av i_{N,z}(j, \ell, k) \av^2 &\leq
\sum_{k\in\not{\cal B}}\sum_{j=1}^{\infty} |\Psi_{j,\ell}\left(k-2[zT]+N/2-1\right)|^2 \\
&\leq \sum_j B_\ell(j,j),
  \end{align*}
where $B_{\ell} (j, p)$ is the fourth-order cross-correlation wavelet absolute value product of order $r=0$, defined as
$B^{(r)}_\ell (j, i) = \sum_{p=-\infty}^\infty |p|^r | \Psi_{j, \ell}(p) \Psi_{i, \ell} (p)|$
for $r=0, 1$ and scales $\ell, j, i \in \nats$.

Splitting the sum of $j$ and using
Proposition \ref{prop:B} leads to the following:
\begin{align*}
  \sum_j B_\ell(j,j) &= \sum_{j=0}^{\ell-1} B_\ell(j,j)+\sum_{j=\ell+1}^{\infty}
B_\ell(j,j)+B_\ell(\ell,\ell) \\
  &= \sum_{j=0}^{\ell-1} 2^{-\ell}\left(2^{2j-1}+1\right) +\sum_{j=\ell+1}^{\infty}
2^{-j}\left(2^{2\ell-1}+1\right)+\frac{1}{3}2^{-\ell}\left(2^{2\ell}+5\right) \\
  &= 2^{-\ell}\left[\frac{1}{2}\frac{1}{3} \left(2^{2\ell}-1\right)+\ell\right] +
\left(2^{2\ell-1}+1\right)2^{-\ell}+\frac{1}{3}2^{-\ell}\left(2^{2\ell}+5\right)\\
  &= \mathcal{O}\left(2^{\ell}\right).
\end{align*}
Hence $\sum_{k=-\infty}^\infty \sum_{j=1}^{\infty} \av i_{N,z}(j, \ell, k) \av^2 =
\calO(2^{2\ell})$.
\end{proof}

Now we prove the second part of Lemma \ref{lem:order2l}.
\begin{proof}
Consider
\begin{align*}
\sum_{k=-\infty}^\infty \sum_{n=-\infty}^\infty &\left\{\sum_{j=1}^{\infty}\av
i_{N,z}(j, \ell, k)  i_{N,z}(j,m,n) \av\right\}^2\\
&=\sum_k \sum_n \sum_{j=1}^\infty | i_{N,z} (j, \ell, k) i_{N,z}
(j, m, n) | \times \\
& \hspace{4cm}	\sum_{p=1}^\infty | i_{N,z} (p, \ell, k) i_{N,z} (p, m, n) | \nonumber\\
	&= \sum_{j, p = 1}^\infty \sum_k |i_{N,z} (j, \ell, k) i_{N,z} (p, \ell, k) |
		\sum_n |i_{N,z} (j, m, n) i_{N,z} (p, m, n) |\\
	\begin{split}&= \sum_{j, p = 1}^\infty  \left\{ \sum_{k \in {\cal B}}  |i_{N,z} (j, \ell, k)
i_{N,z} (p, \ell, k) | \right. \\
 & \left. \hspace{4cm} +
		\sum_{k \not\in {\cal B}} |i_{N,z} (j, \ell, k) i_{N,z} (p, \ell, k) | \right\}\nonumber\\
		&\quad \times\left\{ \sum_{n \in {\cal B}}  |i_{N,z} (j, m, n) i_{N,z} (p, m, n) |  \right. \\
	& \left. \hspace{4cm} +
\sum_{n \not\in {\cal B}}|i_{N,z} (j, m, n) i_{N,z} (p, m, n) | \right\}\nonumber\end{split}\\
	&= T_{{\cal B}{\cal B}} (j, \ell, p, m) +
		T_{{\cal B}{\not{\cal B}}} (j, \ell, p, m) +
		T_{\not{\cal B}{{\cal B}}} (j, \ell, p, m) +
		T_{\not{\cal B}{\not{\cal B}}} (j, \ell, p, m). \nonumber
\end{align*}
The term $T_{\not{\cal B}\not{\cal B}} = \sum_{k \not\in {\cal B}} \sum_{n \not\in {\cal B}}$ is the
case where $i_{N,z}$ can be
bounded by $\Psi$ and is addressed in detail in Lemma~\ref{lem:PsiBound}.
The term $T_{{\cal B}{\cal B}} = \sum_{k \in {\cal B}} \sum_{n \in {\cal B}}$ is dealt with in
Lemma~\ref{lem:bothB} using the bound~\eqref{eq:otherkbound} for  $i_{N, z}$
from Lemma~\ref{lem:boundi} and the cross term is dealt with in Lemma~\ref{lem:cross}.
Each of these lemmas (below) show that each of the product terms is of order no worse than
${\cal O} \{ 2^{2(\ell +m)} \}$.
\end{proof}

\subsection{Proof of Theorem~\ref{thm:JNphi}}\label{app:thm:JNphi}
\begin{proof}
  First recall that we are under the zero-mean locally stationary wavelet process framework as
described in
Appendix \ref{sec:defls}, with $\{ X_{t, T} \}_{t=0}^{T-1}$ a doubly-index stochastic process with
representation given by
\begin{equation}\nonumber
X_{t, T} = \sum_{j=1}^{\infty} \sum_{k=-\infty}^{\infty} w^{0}_{j, k; T} \psi_{j, k}(t) \xi_{j,k}.
\end{equation}

The integrated local periodogram was defined as
\begin{equation}\nonumber
J_N (z,\phi) = \sum_{j=1}^{\infty} \phi_j I^{\ast}_N(z, j)
\end{equation}
where $\{ \phi_j \}_{j=1}^{\infty} \in \Phi$, with $\Phi$ a set of complex-valued bounded sequences
equipped with uniform norm $|| \phi ||_{\infty} := \sup_j | \phi_j |$ and in order to avoid
notational clutter $N$ replaces the interval length notation $L(T)$ present in the main body of the
paper. The quantity $I^{\ast}_N(z, j)$ denotes the uncorrected tapered local wavelet periodogram
\begin{equation}\nonumber
I^{\ast}_N (z,j) = H_N^{-1} \left| \sum_{t=0}^{N-1} h \left( \frac{t}{N} \right) X_{ t - N/2  +1, T}
\psi_{j, [zT]}(t) \right|^2,
\end{equation}
with $h: [0,1] \rightarrow \reals_{+}$ a data taper,
$H_N := \sum_{j=0}^{N-1} h^2 (j/N) \sim N \int_{0}^1 h^2(x) \, dx$ the normalizing factor and
$h(\cdotp)$ is assumed symmetric and with a bounded second derivative.

As in \cite{dahlhaus98:on} we approximate $J_N(z,\phi)$ by the corresponding statistics of a
stationary
process with the same local corresponding statistics at $t = zT$, $z$ fixed. Let
\begin{equation}\nonumber
J^Y_N (\phi) = \sum_{j=1}^{\infty} \phi_j I^{\ast,Y}_{N} (j)
\end{equation}
where
\begin{equation}\nonumber
I^{\ast,Y}_{N} (j) := H_N^{-1} \left| \sum_{s=0}^{N-1} h \left( \frac{s}{N} \right) Y_{ [zT] -
N/2  +1+s, T} \psi_{j, [zT]}(s) \right|^2
\end{equation}
is the wavelet periodogram on the segment $[zT]-N/2+1, \ldots, [zT]+N/2$ of the {\em stationary}
process
\begin{equation}\nonumber
Y_s = \sum_{j=1}^{\infty} W_j (z)  \sum_{k=-\infty}^\infty  \psi_{j, k}(s) \xi_{j, k}.
\end{equation}

{\em Note:}
The next section uses sequences of bounded variation The total variation of a sequence $\{ \phi_j
\}_{j=1}^\infty$ is
defined by $\TV ( \{ \phi_j \}) = \sum_{j=1}^\infty |\phi_{j+1} - \phi_j|$ and the space of all
sequences of
finite total variation is denoted by $\bv$, see~\cite{DS58} for example.

From equations \eqref{consist1}  and \eqref{consist2}, we obtain $J_N(z,\phi) - \E\left(J_N(z,\phi)\right)=J^Y_N(\phi) - \E\left(J^Y_N (\phi)\right)+o_p(N^{-1/2})$ and using equation
\eqref{eq:approx1} it follows that
\beq\nonumber
J_N(z,\phi) =J^Y_N(\phi)+\calO\left( N^{-1} \right)+o_p(N^{-1/2})
\eeq
which reveals the approximation we make and should be compared to equation (4.4) in
\cite{dahlhaus98:on}, where a term $\calO\left( \frac{N}{T} \right)$ appears instead of $\calO\left(
N^{-1} \right)$.

Using the uncorrected tapered local periodogram expression in equation~\eqref{eq:uncortapper} and
the LSW definition in equation~\eqref{eq:lswdef}, by rearranging formulae we can write the
integrated wavelet periodogram:
\begin{equation}\nonumber
J_N (z,\phi) = \sum_{\ell=1}^\infty \sum_{k=-\infty}^\infty \sum_{m=1}^\infty \sum_{n=-\infty}^\infty
\hat{d}_{N, z} (\ell, k, m, n)
	\xi_{\ell, k} \xi_{m, n},
\end{equation}
where
\begin{equation}\nonumber
\hat{d}_{N, z} (\ell, k, m, n) = H_N^{-1} w^0_{\ell, k} w^0_{m, n} \sum_{j=1}^{\infty} \phi_j
i_{N,z}(j, \ell, k) i_{N,z}(j, m, n)
\end{equation}
and from~\eqref{eq:iNzalt}
\begin{displaymath}
i_{N,z} (j, \ell, k) = \sum_{t=0}^{N-1} h\left( \frac{t}{N} \right) \psi_{j, [zT]-t} \psi_{\ell, k-
[zT]-t-1+N/2}.
\end{displaymath}

Using the properties of the $\{\xi_{\ell,k}\}_{\ell,k}$ field, we obtain
\begin{align*}
\E \{ J_N(z,\phi) \}&= \sum_{\ell=1}^\infty \sum_{k=-\infty}^\infty  \sum_{m=1}^\infty
\sum_{n=-\infty}^\infty \hat{d}_{N, z} (\ell, k, m, n) \delta_{l,m} \delta_{k,n}\\
&= \sum_{\ell=1}^\infty \sum_{k=-\infty}^\infty \hat{d}_{N, z} (\ell, k,\ell, k) \\
\begin{split}&= \sum_{\ell=1}^\infty \sum_{k=-\infty}^\infty H_N^{-1}\left[\left\{ w^0_{\ell,
k}-W_\ell(k/T)\right\}+\left\{ W_\ell(k/T)-W_\ell(z)\right\}\right.\\
& \hspace{3cm} \left. +W_\ell(z) \right]^2  \sum_{j=1}^{\infty}\phi_j \av i_{N,z}(j, \ell, k) \av^2\end{split}\\
&=  \underbrace{
\sum_{\ell=1}^\infty \sum_{k=-\infty}^\infty H_N^{-1}W_\ell^2(z) \sum_{j=1}^{\infty}\phi_j \av
i_{N,z}(j, \ell, k) \av^2
}_ {\E(J_N^Y(\phi))} +\mbox { cross terms},
\end{align*}
where the assumption $\sum_{j} W_j(z)^2 2^{2j}<\infty$ (at any rescaled time $z$) ensures that $\E(J_N^Y(\phi))$ is finite.

Therefore, using the LSW property that $\sup_k \left| w^0_{\ell, k; T} - W_\ell(k/T) \right| \leq
C_\ell / T$, leads to
\beq \nonumber
\av \E \{ J_N(z,\phi)\} -\E \{ J^Y_N(\phi) \}   \av \leq \mbox{ sum of the modulus of the cross terms,}
\eeq
all upper bounded by terms of the form
\begin{align*}
A&=\frac{\Av \phi \Av}{H_N}\sum_{\ell=1}^\infty \sum_{k=-\infty}^\infty \frac{C_\ell^2}{T^2}
\sum_{j=1}^{\infty} \av i_{N,z}(j, \ell, k) \av^2,\\
B&= \frac{\Av \phi \Av}{H_N} \sum_{\ell=1}^\infty \sum_{k=-\infty}^\infty
\left[ \frac{C_\ell}{T} \left\{
W_\ell(k/T)- W_\ell(z)\right\} \sum_{j=1}^{\infty}\av i_{N,z}(j, \ell, k) \av^2\right],\\
C&= \frac{\Av \phi \Av}{H_N} \sum_{\ell=1}^\infty \sum_{k=-\infty}^\infty \left[\left\{
W_\ell(k/T)- W_\ell(z)\right\}^2 \sum_{j=1}^{\infty}\av i_{N,z}(j, \ell, k) \av^2\right],\\
D&= \frac{\Av \phi \Av}{H_N} \sum_{\ell=1}^\infty \sum_{k=-\infty}^\infty \left[ \left\{
W_\ell(k/T)- W_\ell(z)\right\} W_\ell(z) \sum_{j=1}^{\infty}\av i_{N,z}(j, \ell, k) \av^2 \right].
\end{align*}

In order to further bound these quantities, we use Lemma~\ref{lem:order2l} that proves
\beq\nonumber
\sum_{k=-\infty}^\infty \sum_{j=1}^{\infty} \av i_{N,z}(j, \ell, k) \av^2 =
\calO(2^{2\ell}),\label{eq:cumul1}
\eeq
hence we obtain
\begin{align*}
A=\frac{\Av \phi \Av}{H_N}\sum_{\ell=1}^\infty \sum_{k=-\infty}^\infty \frac{C_\ell^2}{T^2}
\sum_{j=1}^{\infty} \av i_{N,z}(j, \ell, k) \av^2
&=\frac{\Av \phi \Av}{H_N} \sum_{\ell=1}^\infty \frac{C_\ell^2}{T^2} \sum_{k=-\infty}^\infty
\sum_{j=1}^{\infty} \av i_{N,z}(j, \ell, k) \av^2\\
&\leq \frac{\Av \phi \Av}{H_N} \frac{1}{T^2}\sum_{\ell=1}^\infty C_\ell^2
2^{2\ell}=\calO(N^{-1}T^{-2}),
\end{align*}
where we have used $\sum_{\ell=1}^\infty C_\ell^2 2^{2\ell}<\infty$.

\begin{align*}
B&=\frac{\Av \phi \Av}{H_N}\sum_{\ell=1}^\infty \sum_{k=-\infty}^\infty \left[ \frac{C_\ell}{T} \left\{
W_\ell(k/T)- W_\ell(z)\right\} \sum_{j=1}^{\infty} \av i_{N,z}(j, \ell, k) \av^2 \right]\\
&\leq \frac{\Av \phi \Av}{H_N} \sum_{\ell=1}^\infty \sum_{k=-\infty}^\infty  \sum_{j=1}^{\infty} \frac{C_\ell}{T} L_\ell \left| \frac{k-[zT]}{T}
\right|  \av i_{N,z}(j, \ell, k) \av^2\\
&\leq \frac{\Av \phi \Av}{H_N} \frac{1}{T}\sum_{\ell=1}^\infty C_\ell L_\ell
2^{2\ell}=\calO(N^{-1}T^{-1}),
\end{align*}
using the  Lipschitz continuity of $\{ W_j \}_j$, $ T^{-1} \av k-[zT] \av \in (0,1)$
and the H\"{o}lder inequality
$$\sum_{\ell=1}^\infty C_\ell L_\ell 2^{2\ell}<\left(\sum_{\ell=1}^\infty C_\ell^2
2^{2\ell}\right)^{1/2} \left(\sum_{\ell=1}^\infty L_\ell^2 2^{2\ell}\right)^{1/2}<\infty$$ coupled
with the assumptions in the theorem.

We now need to bound
\begin{align*}
C&=\frac{\Av \phi \Av}{H_N}\sum_{\ell=1}^\infty \sum_{k=-\infty}^\infty \left[ \left\{ W_\ell(k/T)-
W_\ell(z)\right\}^2  \sum_{j=1}^{\infty} \av i_{N,z}(j, \ell, k) \av^2\right]\\
&\leq \frac{\Av \phi \Av}{H_N} \sum_{\ell=1}^\infty \sum_{k=-\infty}^\infty  \sum_{j=1}^{\infty}
L_\ell^2 \frac{\av(k-[zT])^2 \av}{T^2}\av i_{N,z}(j, \ell, k) \av^2. 
\end{align*}
As $\sum_\ell L_\ell^2 2^{2\ell} < \infty$ and as $\av \frac{k-[zT]}{T} \av^2 \in (0,1)$, we obtain
that $C=\calO(N^{-1})$.

The term
\begin{align*}
D&=\frac{\Av \phi \Av}{H_N}\sum_{\ell=1}^\infty \sum_{k=-\infty}^\infty
\left[
\left\{W_\ell(k/T)- W_\ell(z)\right\} W_\ell(z)  \sum_{j=1}^{\infty} \av i_{N,z}(j, \ell, k)\av^2 \right]\\
&\leq \frac{\Av \phi \Av}{H_N} \sum_{\ell=1}^\infty \sum_{k=-\infty}^\infty  \sum_{j=1}^{\infty}
L_\ell W_\ell(z) \frac{\av(k-[zT]) \av}{T}
 \av i_{N,z}(j, \ell, k) \av^2
\end{align*}
can be bounded using H\"{o}lder's inequality
$$\sum_{\ell=1}^\infty W_\ell(z) L_\ell 2^{2\ell}<\left(\sum_{\ell=1}^\infty W_\ell^2(z)
2^{2\ell}\right)^{1/2} \left(\sum_{\ell=1}^\infty L_\ell^2 2^{2\ell}\right)^{1/2}<\infty$$ based on
the assumptions in the theorem and recalling that we assumed $\{Y_t\}$ to be stationary. Hence
$D=\calO(N^{-1})$.


This completes the proof of $\E\left\{ J_N(z,\phi) \right\} = \E\left\{ J^Y_N (\phi)\right\} + \calO \left(
N^{-1} \right)$.

Now let us establish consistency and its rate. Start by considering
\begin{align*}
\begin{split}\Var \{ J_N(z,\phi) \} &=\E\left\{ J_N(z,\phi)-\E(J_N(z,\phi) \right\}^2\\
& =  \sum_{\ell=1}^\infty
\sum_{k=-\infty}^\infty \sum_{m=1}^\infty \sum_{n=-\infty}^\infty  \sum_{\ell^\prime=1}^\infty
\sum_{k^\prime=-\infty}^\infty \sum_{m^\prime=1}^\infty \sum_{n^\prime=-\infty}^\infty\\
 &\quad \hat{d}_{N, z} (\ell, k, m, n)  \hat{d}_{N, z} (\ell^\prime, k^\prime, m^\prime, n^\prime)
\Cov
(\xi_{\ell, k} \xi_{m, n}, \xi_{\ell^\prime, k^\prime} \xi_{m^\prime, n^\prime}).\end{split}
\end{align*}

Using Isserlis, we can decompose
\begin{align*}
\Cov (\xi_{\ell, k} \xi_{m, n}, \xi_{\ell^\prime, k^\prime} \xi_{m^\prime, n^\prime})&=
\E(\xi_{\ell,
k} \xi_{m, n} \xi_{\ell^\prime, k^\prime} \xi_{m^\prime, n^\prime})\\
& \hspace{2cm}  - \E(\xi_{\ell, k} \xi_{m, n})
\E(
\xi_{\ell^\prime, k^\prime} \xi_{m^\prime, n^\prime})\\
\begin{split}&=\E(\xi_{\ell, k} \xi_{m, n})\E( \xi_{\ell^\prime, k^\prime} \xi_{m^\prime,
n^\prime})\\
& \hspace{2cm}+
\E(\xi_{\ell,
k} \xi_{\ell^\prime, k^\prime})\E( \xi_{m,n} \xi_{m^\prime, n^\prime})\\
&\quad + \E(\xi_{\ell, k} \xi_{m^\prime, n^\prime})\E( \xi_{\ell^\prime, k^\prime} \xi_{m,
n})-\E(\xi_{\ell, k} \xi_{m, n}) \E( \xi_{\ell^\prime, k^\prime} \xi_{m^\prime,
n^\prime})\end{split}\\
&= \delta_{\ell,\ell^\prime}\delta_{k,k^\prime}\delta_{m,m^\prime}\delta_{n,n^\prime}+
\delta_{\ell,m^\prime}\delta_{k,n^\prime}\delta_{m,\ell}\delta_{n,k^\prime},
\end{align*}
hence
\begin{align*}
\Var \{ J_N(z,\phi)\} &= \sum_{\ell=1}^\infty \sum_{k=-\infty}^\infty \sum_{m=1}^\infty
\sum_{n=-\infty}^\infty  \hat{d}^2_{N, z} (\ell, k, m, n)\\
& \hspace{2cm} +\sum_{\ell=1}^\infty \sum_{k=-\infty}^\infty \sum_{m=1}^\infty \sum_{n=-\infty}^\infty  \hat{d}_{N,
z} (\ell, k, m, n)   \hat{d}_{N, z} ( m, n,\ell, k) \\
&=2 \sum_{\ell=1}^\infty \sum_{k=-\infty}^\infty \sum_{m=1}^\infty \sum_{n=-\infty}^\infty
\hat{d}^2_{N, z} (\ell, k, m, n),
\end{align*}
as $\hat{d}^2_{N, z} (m, n, \ell, k)=\hat{d}^2_{N, z} (\ell, k, m, n)$.
Hence, we seek to bound
\beq\label{eq:sqterms}
\sum_{\ell=1}^\infty \sum_{k=-\infty}^\infty \sum_{m=1}^\infty \sum_{n=-\infty}^\infty
\hat{d}^2_{N, z} (\ell, k, m, n).
\eeq

Let us now expand the above
\begin{align*} \label{eq:indivterm}
\frac{1}{2}\Var\{ J_N(z,\phi)\} &=
\sum_{\ell=1}^\infty \sum_{k=-\infty}^\infty \sum_{m=1}^\infty \sum_{n=-\infty}^\infty
\left[\left\{ w^0_{\ell, k}-W_\ell(k/T)\right\}+\left\{ W_\ell(k/T)-W_\ell(z)\right\}\right. \\
& \left. +W_\ell(z) \right]^2\\
&\times \left[\left\{ w^0_{m,n}-W_m(n/T)\right\}+\left\{ W_m(n/T) -W_m(z)\right\} +W_m(z) \right]^2\\
& \times \left\{ \sum_{j=1}^{\infty} \phi_j \av i_{N,z}(j, \ell, k)  i_{N,z}(j,m,n) \av \right\}^2.
\end{align*}

Using an inequality of the type $(a+b+c)^2 \leq 3(a^2+b^2+c^2)$, the above quantity is upper bounded
by a linear combination of a finite number of terms, of the following types

\begin{align*}
AA&=\frac{\Av \phi^2 \Av}{H_N^2}\sum_{\ell=1}^\infty \sum_{k=-\infty}^\infty \sum_{m=1}^\infty
\sum_{n=-\infty}^\infty \frac{C_\ell^2}{T^2} \frac{C_m^2}{T^2} \left\{ \sum_{j=1}^{\infty}  \av
i_{N,z}(j, \ell, k)  i_{N,z}(j,m,n) \av \right\}^2,\\
BB&= \frac{\Av \phi^2 \Av}{H_N^2}\sum_{\ell=1}^\infty \sum_{k=-\infty}^\infty \sum_{m=1}^\infty
\sum_{n=-\infty}^\infty \frac{C_\ell^2}{T^2} \left\{ W_m(n/T)- W_m(z)\right\}^2\\
&\hspace{4cm} \times \left\{
\sum_{j=1}^{\infty}  \av i_{N,z}(j, \ell, k)  i_{N,z}(j,m,n) \av \right\}^2,\\
CC&= \frac{\Av \phi^2 \Av}{H_N^2} \sum_{\ell=1}^\infty \sum_{k=-\infty}^\infty \sum_{m=1}^\infty
\sum_{n=-\infty}^\infty \left\{ W_\ell(k/T)- W_\ell(z)\right\} ^2 \left\{ W_m(n/T) -
W_m(z)\right\} ^2\\
&\hspace{4cm} \times \left\{ \sum_{j=1}^{\infty}  \av i_{N,z}(j, \ell, k)  i_{N,z}(j,m,n) \av \right\}^2,\\
DD&= \frac{\Av \phi^2 \Av}{H_N^2} \sum_{\ell=1}^\infty \sum_{k=-\infty}^\infty \sum_{m=1}^\infty
\sum_{n=-\infty}^\infty \left\{ W_\ell(k/T)- W_\ell(z) \right\}^2 W_m^2(z)\\
&\hspace{4cm} \times
\left\{ \sum_{j=1}^{\infty}  \av i_{N,z}(j, \ell, k)  i_{N,z}(j,m,n) \av \right\}^2,\\
EE&= \frac{\Av \phi^2 \Av}{H_N^2} \sum_{\ell=1}^\infty \sum_{k=-\infty}^\infty \sum_{m=1}^\infty
\sum_{n=-\infty}^\infty W_\ell^2(z) W_m^2(z)\\
& \hspace{4cm} \times
\left\{ \sum_{j=1}^{\infty}  \av i_{N,z}(j, \ell, k)  i_{N,z}(j,m,n) \av \right\}^2.
\end{align*}

In order to bound the above quantities, we use Lemma~\ref{lem:orderlpm} which proves that
\beq\label{eq:cumul1.gen}
\sum_{k=-\infty}^\infty \sum_{n=-\infty}^\infty \left\{\sum_{j=1}^{\infty}\av  i_{N,z}(j, \ell, k)
i_{N,z}(j,m,n) \av\right\}^2 = \calO(2^{(\ell+m)}).
\eeq
Using this result we can bound each term in turn, as follows.

The first term
\begin{align*}
AA&=\frac{\Av \phi^2 \Av}{H_N^2} \sum_{\ell=1}^\infty \sum_{k=-\infty}^\infty \sum_{m=1}^\infty
\sum_{n=-\infty}^\infty \frac{C_\ell^2}{T^2} \frac{C_m^2}{T^2} \left\{\sum_{j=1}^{\infty}\av
i_{N,z}(j, \ell, k)  i_{N,z}(j,m,n) \av\right\}^2\\
&=\frac{\Av \phi^2 \Av}{H_N^2} \sum_{\ell=1}^\infty  \sum_{m=1}^\infty \frac{C_\ell^2}{T^2}
\frac{C_m^2}{T^2}
\sum_{k=-\infty}^\infty \sum_{n=-\infty}^\infty \left\{\sum_{j=1}^{\infty}\av  i_{N,z}(j, \ell, k)
i_{N,z}(j,m,n) \av\right\}^2\\
&\leq \frac{\Av \phi^2 \Av}{H_N^2} \frac{1}{T^4}\sum_{\ell=1}^\infty {C_\ell^2} 2^{2\ell}
\sum_{m=1}^\infty {C_m^2} 2^{2m}=\calO(N^{-2}T^{-4})
\end{align*}
where we used $\sum_{\ell=1}^\infty {C_\ell^2} 2^{2\ell}<\infty$.

The term
\begin{align*}
BB&=\frac{\Av \phi^2 \Av}{H_N^2}\sum_{\ell=1}^\infty \sum_{k=-\infty}^\infty \sum_{m=1}^\infty
\sum_{n=-\infty}^\infty \frac{C_\ell^2}{T^2} \left\{ W_m(n/T)- W_m(z)\right)^2 \left\{
\sum_{j=1}^{\infty} \av i_{N,z}(j, \ell, k)  i_{N,z}(j,m,n) \av \right\}^2,\\
&=\frac{\Av \phi^2 \Av}{H_N^2}\frac{1}{T^2}\sum_{\ell=1}^\infty {C_\ell^2} \sum_{m=1}^\infty
\sum_{k=-\infty}^\infty \sum_{n=-\infty}^\infty \left\{ W_m(n/T)- W_m(z)\right\}^2
\left\{\sum_{j=1}^{\infty}\av  i_{N,z}(j, \ell, k)  i_{N,z}(j,m,n) \av\right\}^2\\
&\leq \frac{\Av \phi^2 \Av}{H_N^2}\frac{1}{T^2}\sum_{\ell=1}^\infty {C_\ell^2} \sum_{m=1}^\infty
\sum_{k=-\infty}^\infty \sum_{n=-\infty}^\infty L_m^2 \left| \frac{n}{T}-z \right|^2
\left\{\sum_{j=1}^{\infty}\av  i_{N,z}(j, \ell, k)  i_{N,z}(j,m,n) \av\right\}^2\\
&\leq \frac{\Av \phi^2 \Av}{H_N^2}\frac{1}{T^2}\sum_{\ell=1}^\infty {C_\ell^2}2^{2\ell}
\sum_{m=1}^\infty
L_m^2 2^{2m}=\calO(N^{-2}T^{-2}),
\end{align*}
where we have used the Lipschitz continuity of $W_m$, $\av \frac{n}{T}-z \av^2 \in (0,1)$,
$\sum_\ell C_\ell^2 2^{2\ell}< \infty$ and $\sum_m L_m^2 2^{2m}< \infty$.

Using the same set of arguments, we bound
\begin{align*}
CC&= \frac{\Av \phi^2 \Av}{H_N^2} \sum_{\ell=1}^\infty \sum_{k=-\infty}^\infty \sum_{m=1}^\infty
\sum_{n=-\infty}^\infty \left\{ W_\ell(k/T)- W_\ell(z)\right\}^2 \left\{ W_m(n/T)-
W_m(z)\right\} ^2\\
&\times \left\{ \sum_{j=1}^{\infty} \av i_{N,z}(j, \ell, k)  i_{N,z}(j,m,n) \av \right\}^2,\\
&\leq \frac{\Av \phi^2 \Av}{H_N^2}\sum_{\ell=1}^\infty \sum_{m=1}^\infty
\sum_{k=-\infty}^\infty \sum_{n=-\infty}^\infty L_\ell^2 \left| \frac{k}{T}-z \right|^2 L_m^2 \left|
\frac{n}{T}-z \right|^2 \left\{\sum_{j=1}^{\infty}\av  i_{N,z}(j, \ell, k)  i_{N,z}(j,m,n)
\av\right\}^2\\
&\leq \frac{\Av \phi^2 \Av}{H_N^2} \sum_{\ell=1}^\infty {L_\ell^2}2^{2\ell} \sum_{m=1}^\infty
L_m^2 2^{2m}=\calO(N^{-2}).
\end{align*}

The term
\begin{align*}
DD&= \frac{\Av \phi^2 \Av}{H_N^2} \sum_{\ell=1}^\infty \sum_{k=-\infty}^\infty \sum_{m=1}^\infty
\sum_{n=-\infty}^\infty \left\{ W_\ell(k/T)- W_\ell(z) \right\}^2 W_m^2(z)\\
& \hspace{4cm} \times
\left\{ \sum_{j=1}^{\infty} \av i_{N,z}(j, \ell, k)  i_{N,z}(j,m,n) \av \right\}^2,\\
&\leq \frac{\Av \phi^2 \Av}{H_N^2} \sum_{\ell=1}^\infty \sum_{m=1}^\infty W_m^2(z)
\sum_{k=-\infty}^\infty \sum_{n=-\infty}^\infty L_\ell^2 \left| \frac{k}{T}-z \right|^2\\
& \hspace{4cm} \times
\left\{\sum_{j=1}^{\infty}\av  i_{N,z}(j, \ell, k)  i_{N,z}(j,m,n) \av\right\}^2\\
&\leq \frac{\Av \phi^2 \Av}{H_N^2} \sum_{\ell=1}^\infty {L_\ell^2}2^{2\ell} \sum_{m=1}^\infty
W_m^2(z) 2^{2m}=\calO(N^{-2}),
\end{align*}
as $\av \frac{k}{T}-z \av^2 \in (0,1)$, $\sum_\ell L_\ell^2 2^{2\ell}< \infty$ and $\sum_m W_m^2(z)
2^{2m}<\infty$ at a set $z$ (recall the process $\{Y_t\}$ was assumed stationary).

Similarly,
\begin{align*}
EE&= \frac{\Av \phi^2 \Av}{H_N^2} \sum_{\ell=1}^\infty \sum_{k=-\infty}^\infty \sum_{m=1}^\infty
\sum_{n=-\infty}^\infty W_\ell^2(z) W_m^2(z)\\
& \hspace{4cm} \times
\left\{ \sum_{j=1}^{\infty} \phi_j \av i_{N,z}(j, \ell, k)  i_{N,z}(j,m,n) \av \right\}^2\\
&\leq \frac{\Av \phi^2 \Av}{H_N^2} \sum_{\ell=1}^\infty W_\ell^2(z) \sum_{m=1}^\infty W_m^2(z)
\sum_{k=-\infty}^\infty \sum_{n=-\infty}^\infty\\
& \hspace{4cm} \times \left\{\sum_{j=1}^{\infty}\av  i_{N,z}(j, \ell, k)
i_{N,z}(j,m,n) \av\right\}^2\\
&\leq \frac{\Av \phi^2 \Av}{H_N^2} \sum_{\ell=1}^\infty W_\ell^2(z) 2^{2\ell} \sum_{m=1}^\infty
W_m^2(z) 2^{2m}=\calO(N^{-2}).
\end{align*}

We therefore obtain that
\beq
\Var \{ N^{1/2}J_N(z,\phi) \} \leq K/N, \mbox{\ \ \ for some constant $K$},
\eeq
hence $J_N(z,\phi)-\E \{ J_N(z,\phi) \}=o_P(N^{-1/2})$.
The result for the process $\{Y_t\}$ follows similarly, which concludes the proof of Theorem
\ref{thm:JNphi}.
\end{proof}


\subsection{Proof of Proposition \ref{coro:pwindow}}\label{app:coro:pwindow}
\begin{proof}
  Theorem \ref{thm:JNphi} established the limit properties of the approximation we make for
$J_{L(T)} (z,\phi)$. From~(\ref{consist1})  and~(\ref{consist2}), we obtain
$J_{L(T)}(z,\phi) - \E\left\{ J_{L(T)} (z,\phi)\right \} = J^Y_{L(T)}(\phi) - \E\left\{ J^Y_{L(T)}
(\phi)\right\} +o_p\left\{ {L(T)}^{-1/2}\right\}$ and using equation (\ref{eq:approx1}) it follows
that
\beq\label{eq:LSWapprox}
J_{L(T)}(z,\phi) =J^Y_{L(T)}(\phi)+\calO\left\{ {L(T)}^{-1} \right\}+o_p\left\{ {L(T)}^{-1/2}
\right\}.
\eeq

Now recall we defined our windowed local partial autocorrelation
estimator \\ $\ptildeW{z}{\tau}=\hat{q}_{\left[ z- L(T)/2T, z+ L(T)/ 2T \right]}(\tau)$, hence
$\ptildeW{z}{\tau}=\left(\hat{\Gamma}_{z,\tau}^{-1}\underline{\hat{\gamma}}_{z,\tau}\right)_{\tau}$
where both $\hat{\Gamma}_{z,\tau}$ and $\underline{\hat{\gamma}}_{z,\tau}$ are a matrix,
respectively vector of local tapered covariances $\hat{c}(z,\tau)$.

The elements of the covariance matrix ($\hat{\Gamma}$) and vector ($\hat{\gamma}$) are
$\hat{c}(z,\tau)=\sum_{j} \hat{S}_{j}(z)\Psi_j(\tau)$ and thus can be written as integrated
periodograms $J_{L(T)}$, since $\hat{\mathbf{S}}_{j}(z)= A_J^{-1} {\mathbf I}^{\ast}_{L(T)} (z)$ and
${\mathbf I}^{\ast}_{L(T)}(z) = \left( I^{\ast}_{L(T)} (z, 1), \ldots, I^{\ast}_{L(T)}(z, J)
\right)^T$. Using the result in equation \eqref{eq:LSWapprox}, in the manner of
\cite{dahlhaus98:on}, it follows that $\ptildeW{z}{\tau}$ has the same asymptotic distribution as in
the stationary case.
\end{proof}

\subsection{Additional Results Required For the Proofs from Section \ref{sec:3}}

\subsubsection{Proof of Lemma \ref{lem:omegai}}\label{proof:lem:omegai}
\begin{proof}
Using the substitution $y = x -u$ we can break the integral for $\Omega_i$ into two pieces as
follows:
\begin{equation}
\label{eq:omegaisplit}
\begin{split}
\Omega_i (u) & = \int_{-\infty}^{\infty} 2^{-i/2} \psi (2^{-i} x) \psi (x-u) \, dx\\
&= 2^{-i/2} \int_{-\infty}^{\infty} \psi\{ 2^{-i} (y+u) \} \psi (y) \,  dy\\
&= 2^{-i/2} \int_0^1 \psi\{ 2^{-i} (y+u) \} \psi (y) \,  dy\\
&= 2^{-i/2} \left[  - \int_0^{\frac{1}{2}} \psi\{ 2^{-i} (y+u) \} \, dy + \int_\frac{1}{2}^1 \psi\{
2^{-i} (y+u) \} \, dy \right]\\
&= 2^{-i/2} \{ - I + II \},
\end{split}
\end{equation}
where $I, II$ are the integrals in the final line of~\eqref{eq:omegaisplit}.

Let us consider integral $I$ first, making the substitution $x = 2^{-i} (y+u)$
\begin{equation}\nonumber
I = \int_0^{\frac{1}{2}} \psi\{ 2^{-i} (y+u) \} \, dy = 2^i \int_{2^{-i} u}^{2^{-i}(u +
\frac{1}{2})} \psi(x) \, dx.
\end{equation}
So, integral $I$ is the result of integrating the product of $\psi(x)$ with the moving window
$[ 2^{-i}u, 2^{-i}(u + \frac{1}{2})]$. To derive integral $I$ we first note that $I=0$ if
$2^{-i}u \geq 1$ or $2^{-i}(u + \frac{1}{2}) \leq 0$ which translates into $I=0$ if
$u \geq 2^i$ or $u \leq -\frac{1}{2}$. We break down the remainder of the case
$-\frac{1}{2} < u < 2^i$ into five subregions.

{\em (Ia)} If $2^i - \frac{1}{2} \leq u < 2^i$ then
\begin{equation}\nonumber
I = 2^i \int_{2^{-i} u}^1 \psi(x) \, dx  = 2^i \int_{2^{-i}u}^1 \, dx = 2^i ( 1 - 2^{-i}u) = 2^i -
u.
\end{equation}

{\em (Ib)} If $2^{i-1} \leq u < 2^i - \frac{1}{2}$ then
\begin{equation}\nonumber
I = 2^i \int_{2^{-i}u}^{2^{-i} (u + \frac{1}{2})} \, dx = 2^i ( 2^{-i}u  + 2^{-i-1} - 2^{-i}u ) =
2^{-1}.
\end{equation}

{\em (Ic)} If $2^{i-1} - \frac{1}{2} \leq u \leq 2^{i-1}$ then
\begin{equation}\nonumber
I = 2^i \left\{ \int_{2^{-i}u}^{\frac{1}{2}} (-1) \, dx + \int_{\frac{1}{2}}^{2^{-i}(u +
\frac{1}{2})} \, dx \right\}
	= 2u - 2^i + \tfrac{1}{2}.
\end{equation}

{\em (Id)} If $0 \leq u < 2^{i-1} - \frac{1}{2}$ then
\begin{equation}\nonumber
I = 2^i \int_{2^{-i}u}^{2^{-i}(u + \frac{1}{2})} (-1) \, dx = -\tfrac{1}{2}.
\end{equation}

{\em (Ie)} If $-\frac{1}{2} \leq u < 0$ then
\begin{equation}\nonumber
I = 2^i \int_0^{2^{-i}(u + \frac{1}{2})} (-1) \, dx = -(u + \tfrac{1}{2}).
\end{equation}

For the second integral we note that:
\begin{equation}\nonumber
II(u) = \int_{\frac{1}{2}}^1 \psi \{ 2^{-i} (y + u) \} \, dy = 2^i \int_{2^{-i} (u +
\frac{1}{2})}^{2^{-i} (u+1)} \psi(x)  = I(u+\tfrac{1}{2}).
\end{equation}
In other words, we have already done the work to evaluate $II(u)$.
Hence,
\begin{equation}\nonumber
\begin{cases}
0 & \mbox{for } u < -1,\\
-(u+1) & \mbox{for } -1 \leq u < -\tfrac{1}{2}  \mbox{   {\em (IIa)}},\\
-\tfrac{1}{2} & \mbox{for } -\tfrac{1}{2} \leq u < 2^{i-1} - 1 \mbox{   {\em (IIb)}},\\
2u - 2^i +\tfrac{3}{2} & \mbox{for } 2^{i-1}-1 \leq u < 2^{i-1} - \tfrac{1}{2}  \mbox{   {\em
(IIc)}},\\
\tfrac{1}{2} & \mbox{for } 2^{i-1}-\tfrac{1}{2} \leq u < 2^i - 1 \mbox{   {\em (IId)}},\\
2^i - u - \tfrac{1}{2} & \mbox{for } 2^i - 1 \leq u < 2^i - \tfrac{1}{2} \mbox{  {\em (IIe)}},\\
0 & \mbox{for } 2^i - \tfrac{1}{2} \leq u.
\end{cases}
\end{equation}

Now we need to put the two results together into~\eqref{eq:omegaisplit} and work out the regions of
overlap
in the two sets of intervals. Hence, define the following regions:
\begin{equation}
\label{eq:regionsIII}
\begin{cases}
III_a & = \{ u: u < -1 \},\\
III_b & = II_a = \{ u: -1 \leq u < -\tfrac{1}{2} \},\\
III_c &= I_a \cap II_b = \{ u: -\tfrac{1}{2}  \leq u < 0 \},\\
III_d &= I_b \cap II_b = \{ u: 0 \leq u < 2^{i-1} - 1\},\\
III_e &= I_b \cap II_c = \{ u: 2^{i-1}-1 \leq u < 2^{i-1} - \tfrac{1}{2}\},\\
III_f &= I_c \cap II_d = \{ u: 2^{i-1} - \tfrac{1}{2} \leq u < 2^{i-1}\},\\
III_g &= I_d \cap II_d = \{ u: 2^{i-1} \leq u < 2^i - 1 \},\\
III_h &= I_d \cap II_e = \{ u: 2^i-1 \leq u < 2^i - \tfrac{1}{2} \},\\
III_i &= I_e = \{ u: 2^i-\tfrac{1}{2} \leq u < 2^i \},\\
III_j &= \{ u: 2^i \leq u\}.
\end{cases}
\end{equation}
Putting together the two integrals with these new domains gives the result in~\eqref{eq:omegaihaar}.
\end{proof}

\subsubsection{Proof of Lemma \ref{lem:bothB}}\label{proof:lem:bothB}
\begin{proof}
We bound $T_{{\cal B}{\cal B}}(\ell, m)$ and include the indices $\ell, m$ explicitly.
For $\ell < m$ we have
\begin{align*}
T_{{\cal B}{\cal B}}(\ell, m)&= \sum_{k \in {\cal B}} \sum_{n \in {\cal B}}  \left\{
		\sum_{j=1}^\infty  | i_{N, z} (j, \ell, k) i_{N, z} (j, m ,n)| \right\}^2\\
	\begin{split}
	  &\leq \sum_{k \in {\cal B}, n \in {\cal B}}  \left[
		\sum_{j=1}^{\ell - 1} 2^{-(\ell+j - m - j)/2} \{ \min(N_j,N_\ell)+N_\ell \} \{ \min(N_j,N_m)+N_m \} \right.\\
	&\quad + \sum_{j=\ell}^{m-1}
2^{-(j+\ell)/2} \{ \min(N_j,N_\ell)+N_\ell \} 2^{-(m+j)/2} \{ \min(N_j,N_m)+N_m \} \\
	&\quad + \left. \sum_{j=m}^\infty
2^{-(j+\ell)/2} \{ \min(N_j,N_\ell)+N_\ell \} 2^{-(j+m)/2}
		\{ \min(N_j,N_m)+N_m \} \right]^2,
	\end{split}
\end{align*}
using bound~\eqref{eq:otherkbound}. Hence
\begin{align}
\begin{split}T_{{\cal B}{\cal B}} (\ell, m) &\leq \sum_{k \in {\cal B}} \sum_{n \in {\cal B}}
	\left\{ 2^{-(\ell+m)/2} \sum_{j=1}^{\ell - 1} 2^{-j}\left(2^{2j}+2^{j+m}+2^{j+\ell}+
2^{\ell+m}\right) \right. \nonumber\\
&\quad + \left. 2^{1-(\ell+m)/2}\sum_{j=\ell}^{m-1}
2^{-j}\left(2^{\ell+j}+2^{\ell+m}\right) + 2^{2-(\ell+m)/2}\sum_{j=m}^\infty 2^{-j}2^{\ell+m}
\right\}^2\end{split}\nonumber\\
		\begin{split}&= \sum_{k \in {\cal B}} \sum_{n \in {\cal B}}
	\left\{ 2^{-(\ell+m)/2} \sum_{j=1}^{\ell - 1} 2^{j}+ 2^{-(\ell-m)/2} \sum_{j=1}^{\ell - 1}
2^{0} + 2^{-(m-\ell)/2} \sum_{j=1}^{\ell - 1} 2^{0} \right. \nonumber\\
&\quad + \left. 2^{(\ell+m)/2} \sum_{j=1}^{\ell - 1} 2^{-j}+ 2^{1-(m-\ell)/2}\sum_{j=\ell}^{m-1}
2^{0} + 2^{1+(\ell+m)/2}\sum_{j=\ell}^{m-1} 2^{-j}\right. \\
& \hspace{4cm} \left. + 2^{2+(\ell+m)/2}\sum_{j=m}^\infty 2^{-j}
\right\}^2\end{split}\\
		\begin{split}&= \sum_{k \in {\cal B}} \sum_{n \in {\cal B}}
	\left\{ 2^{-(\ell+m)/2}\left(2^\ell-2\right)+ 2^{-(\ell-m)/2}\left(\ell-1\right)\right.\\
	&\quad   + 2^{-(m-\ell)/2}\left(\ell-1\right) +
2^{(\ell+m)/2}\left(1-2^{-(\ell-1)}\right)\nonumber\\
        &\quad + \left.2^{1-(m-\ell)/2}\left(m-\ell\right) +
2^{2+(\ell+m)/2}\left(2^{-\ell}-2^{-m}\right)+
2^{2+(\ell+m)/2}2^{1-m}\right\}^2\end{split}\nonumber\\
		\begin{split}&= \sum_{k \in {\cal B}} \sum_{n \in {\cal B}}
	\left\{ \left(\ell-1+1+8+2m-2\ell-4\right)2^{-(m-\ell)/2}- 2^{1-(\ell+m)/2}\right.\nonumber\\
	&\quad + \left.\left(l-1-2+4\right)2^{-(\ell-m)/2} +
2^{(\ell+m)/2}\right\}^2\end{split}\nonumber\\
		&= \sum_{k \in {\cal B}} \sum_{n \in {\cal B}}
	\left\{ \left(4-\ell+2m\right)2^{-(m-\ell)/2}- 2^{1-(\ell+m)/2}\right. \nonumber \\
	& \left. \quad + \left(l+1\right)2^{-(\ell-m)/2} + 2^{(\ell+m)/2}\right\}^2\nonumber\\
		&= \left(N_\ell -1\right)\left(N_m -1\right)
	\left\{ \left(4-\ell+2m\right)2^{-(m-\ell)/2}- 2^{1-(\ell+m)/2}\right.\\
	& \left. \quad +
\left(l+1\right)2^{-(\ell-m)/2} + 2^{(\ell+m)/2}\right\}^2\nonumber\\
		&= \left(N_\ell -1\right)\left(N_m -1\right)
	\left\{ \frac{2^{\ell+m}-2}{2^{(\ell+m)/2}}+
\frac{\left(\ell+1\right)2^{-(\ell-m)}+\left(4-\ell+2m\right)}{2^{-(\ell-m)/2}}\right\}^2\nonumber\\
		\begin{split}&= \left(N_\ell -1\right)\left(N_m -1\right)
	\left\{ \frac{2^{2(\ell+m)}-2^{1+\ell+m}+4}{2^{\ell+m}}\right.\\
	&\quad +\frac{\left(\ell+1\right)^2 2^{-2(\ell-m)}+ \left(4-\ell+2m\right)\left(l+1\right)2^{
1-(\ell-m) } +\left(4-\ell+2m\right)^2}{2^{-(\ell-m)}}\\
	&\quad +\left.\frac{\left(2^{\ell+m} -2\right)\left(\left(\ell+1\right)2^{-(\ell-m)}+
\left(4-\ell+2m\right)\right)}{2^m} \right\}\end{split}\label{eq:TBBstage}\\
		\begin{split}&= \left(2^\ell -1\right)\left(2^m -1\right)
	\left\{ 2^{\ell+m}-2+2^{2-(\ell+m)} +\left(\ell+1\right)^2 2^{-(\ell-m)}\right.\\
	&\quad  \left. +
2\left(4-\ell+2m\right)\left(l+1\right) + \left(4-\ell+2m\right)^2 2^{\ell-m}+\left(\ell+1\right)2^m\right.\\
& \quad \left. +
\left(4-\ell+2m\right)2^\ell- 2\left(\ell+1\right)2^{-\ell} -
2\left(4-\ell+2m\right)2^{-m}\right\}\end{split}\nonumber\\
		&= \mathcal{O} \{ 2^{2(\ell+m)} \} \nonumber
\end{align}
Due to symmetry, if $\ell \geq m$ then we have~\eqref{eq:TBBstage} with the indices $m$ and $\ell$
interchanged.
\end{proof}

\subsubsection{Proof of Lemma \ref{lem:cross}}\label{proof:lem:cross}
\begin{proof} We bound $T_{\not{\cal B}{\cal B}}(\ell, m)$ and include the indices $\ell, m$
explicitly.
For $\ell < m$ we have
\begin{align}
T_{\not{\cal B}{\cal B}}(\ell, m) &=
	\sum_{k \not\in {\cal B}} \sum_{n \in {\cal B}}  \left\{
		\sum_{j=1}^\infty  | i_{N, z} (j, \ell, k) i_{N, z} (j, m ,n)| \right\}^2\nonumber\\
		&= \sum_{p, j=1}^\infty U( j, p, m) V(j, p, \ell),
		\label{eq:sumwithk}
\end{align}
where
\begin{equation}
U(j, p, m) = \sum_{n \in {\cal B}} | i_{N,z} (j, m, n) i_{N,z} (p, m, n)|,\nonumber
\end{equation}
and
\begin{equation}
V(j, p, \ell) = \sum_{k \not\in {\cal B}} | i_{N, z} (j, \ell, k)  i_{N,z}(p, \ell,
k)|.\label{eqn:V}
\end{equation}
Now, for $n \in {\cal B}$ we can apply inequality~\eqref{eq:otherkbound} to obtain
\begin{align*}
U(j, p, m) &\leq \sum_{n \in {\cal B}}  2^{-(j+m)/2}\left(\min(N_j,N_m)+N_m\right)
2^{-(p+m)/2}\left(\min(N_p,N_m)+N_m\right)\\
 &= \left(N_m-1\right)2^{-(j+m)/2}\left(2^{\min(j,m)}+2^m\right)
2^{-(p+m)/2}\left(2^{\min(p,m)}+2^m\right)\\
 &= \left(N_m-1\right)2^{-m}2^{-(j+p)/2}\left(2^{\min(j,m)}+2^m\right)
\left(2^{\min(p,m)}+2^m\right)\\
 &=
\left(N_m-1\right)2^{-m}2^{-(j+p)/2} \\
& \quad \times \left\{ 2^{\min(j,m)+\min(p,m)}+2^{m+\min(j,m)}+ 2^{m+\min(p,m)}
+2^{2m}\right\}.\\
\end{align*}
Now let us examine the sum over $k$ in~\eqref{eq:sumwithk} using the $\Psi$ bound for $i_{N,z}$
from~\eqref{eq:maineq}.
\begin{align*}
V(j, p, \ell) &\leq
	\sum_{k \not\in {\cal B}} | \Psi_{j, \ell} ( k - 2[zT] +N/2 -1)  \Psi_{p, \ell} (k - 2[zT] + N/2
- 1) |\\
	&\leq \sum_{k} | \Psi_{j, \ell} ( k - 2[zT] +N/2 -1)  \Psi_{p, \ell} (k - 2[zT] + N/2 - 1)|\\
	&= \sum_q | \Psi_{j, \ell} (q) | | \Psi_{p, \ell} (q) | = B_{\ell} (j, p),
\end{align*}
where $B_{\ell} (j, p)$ is the fourth-order cross-correlation wavelet absolute value product of order $r=0$, defined as
$B^{(r)}_\ell (j, i) = \sum_{p=-\infty}^\infty |p|^r | \Psi_{j, \ell}(p) \Psi_{i, \ell} (p)|$
for $r=0, 1$ and scales $\ell, j, i \in \nats$.

Hence
\begin{align}
\begin{split}T_{\not{\cal B}{\cal B}}(\ell,m) &\leq (N_m-1)2^{-m}\left[ \sum_{j=1}^m\sum_{p=1}^m
 2^{-(j+p)/2}\left(2^{j+p}+2^{m+j}+2^{m+p}+2^{2m}\right)B_{\ell}(j,p) \right.\nonumber\\
&\quad + \sum_{j=1}^m\sum_{p=m+1}^{\infty}
2^{-(j+p)/2}\left(2^{j+m}+2^{m+j}+2^{2m}+2^{2m}\right)B_{\ell}(j,p) \nonumber\\
&\quad + \sum_{j=m+1}^{\infty}\sum_{p=1}^m
2^{-(j+p)/2}\left(2^{m+p}+2^{2m}+2^{m+p}+2^{2m}\right)B_{\ell}(j,p) \nonumber\\
&\quad + \left.\sum_{j=m+1}^{\infty}\sum_{p=m+1}^{\infty}
2^{-(j+p)/2}\left(2^{2m}+2^{2m}+2^{2m}+2^{2m}\right)B_{\ell}(j,p)\right]\end{split}\nonumber
\end{align}
Using $B_\ell (j,p) \leq K  2^{-(j+p)/2} 2^{2\ell}$ (see Proposition \ref{prop:B}), we obtain

\begin{align*}
\begin{split}T_{\not{\cal B}{\cal B}}(\ell,m) &\leq K2^{2\ell}(N_m-1)2^{-m} \left[
\sum_{j=1}^m\sum_{p=1}^m
 2^{-(j+p)/2}\left(2^{(j+p)/2}+2^{m+(j-p)/2}\right. \right. \\
 & \quad  \left. \left. +2^{m+(p-j)/2}+2^{2m-(j+p)/2}\right)\right.\nonumber\\
&\quad + \sum_{j=1}^m\sum_{p=m+1}^{\infty}
2^{-(j+p)/2}\left(2^{m+(j-p)/2}+2^{m+(j-p)/2}+2^{1+2m-(j+p)/2}\right)\nonumber\\
&\quad + \sum_{j=m+1}^{\infty}\sum_{p=1}^m
2^{-(j+p)/2}\left(2^{m+(p-j)/2}+2^{1+2m-(j+p)/2}+2^{m+(p-j)/2}\right)\nonumber\\
&\quad + \left.\sum_{j=m+1}^{\infty}\sum_{p=m+1}^{\infty}
2^{-(j+p)/2}2^{2+2m-(j+p)/2}\right]\nonumber\end{split}\\
\begin{split}&= K2^{2\ell}(N_m-1)2^{-m} \left[ \sum_{j=1}^m\sum_{p=1}^m
 \left(2^0+2^{m-p}+2^{m-j}+2^{2m-j-p}\right)\right.\nonumber\\
&\quad + \sum_{j=1}^m\sum_{p=m+1}^{\infty} \left(2^{1+m-p}+2^{1+2m-j-p}\right)
+ \sum_{j=m+1}^{\infty}\sum_{p=1}^m \left(2^{1+m-j}+2^{1+2m-j-p}\right)\nonumber\\
&\quad + \left.\sum_{j=m+1}^{\infty}\sum_{p=m+1}^{\infty}
2^{2+2m-j-p}\right]\end{split}\nonumber\\
\begin{split}&= K2^{2\ell}\left(N_m-1\right)2^{-m}\left[
m^2+2m\left(2^m-1\right)+\left(2^{2m}-2^{m+1}+1\right)
+2m\right.\\
&\quad \left.+2\left(2^m-1\right)+2m+2\left(2^m-1\right)+2^2\right] \end{split}\\
&= K2^{2\ell}\left(N_m-1\right)2^{-m}\left( m+1+2^m \right)^2\\
&= \mathcal{O}\left(2^{2(\ell+m)}\right).
\end{align*}

By similar calculations the other cross term $T_{{\cal B}{\not\cal B}}(\ell, m) = {\cal O} (
2^{2(\ell+m)})$.
\end{proof}

\subsubsection{Proof of Lemma \ref{lem:PsiBound}}\label{proof:lem:PsiBound}
\begin{proof}
Using the definition of $V$ from \eqref{eqn:V},
\begin{align*}
T_{\not{{\cal B}}\not{{\cal B}}}(\ell, m) &= \sum_{k \not\in {\cal B}} \sum_{n \not\in {\cal B}}
\left\{\sum_{j=1}^\infty|i_{N,z}(j,\ell,k)i_{N,z}(j,m,n)|\right\}^2 \nonumber\\
&= \sum_{p, j=1}^\infty V(j,p,m) V(j,p,\ell) \nonumber\\
&\leq \sum_{p, j=1}^\infty B_m(j,p) B_\ell(j,p) \nonumber\\
&\leq K^2 \sum_{p, j=1}^\infty 2^{2(\ell+m)}2^{-(j+p)}\nonumber\\
&= K' 2^{2(\ell+m)} \mbox{ for some constant } K'\nonumber\\
&= \mathcal{O}\left \{ 2^{2(\ell+m)}\right\}.
\end{align*}
\end{proof}

\section{Proof of Proposition \ref{prop:B}}\label{proof:prob:B}
\begin{proof}
%
%
\underline{Part A:} for $i, j > \ell$.
We can work out the exact formula for $B_{\ell} (j, j)$ by direct application of the
formula~\eqref{eq:HaarXCorr}
for
$\Psi_{j, \ell} (p)$ for $j > \ell$.
\begin{align}
B^{(0)}_\ell (j, j) &=   \sum_p \Psi^2_{j, \ell} (p)\nonumber\\
&= 2^{-j+\ell} \left\{ \sum_{p = -2^\ell}^{-2^{\ell-1} -1} (2^{-\ell} p + 1)^2 +
\sum_{p=-2^{\ell-1}}^{-1} (2^{-\ell}p)^2 \right.\nonumber\\
	&+
	2^{-2\ell} \sum_{p=2^{j-1} - 2^\ell}^{2^{j-1} - 2^{\ell-1}-1}  (2p - 2^j +
2^{\ell+1})^2\nonumber\\
	&+
	2^{-2\ell} \sum_{p=2^{j-1} - 2^{\ell-1}}^{2^{j-1}-1} (2^j - 2p)^2\nonumber\\
	&+
	2^{-2\ell} \sum_{p=2^j - 2^\ell}^{2^j - 2^{\ell-1} - 1} (2^j - p - 2^\ell)^2\nonumber\\
	&+
	\left. 2^{-2\ell} \sum_{p=2^j - 2^{\ell-1}}^{2^j - 1} (p - 2^j)^2\right\}. \label{eq:Bjj}
\end{align}
Hence, after some algebra,
\begin{equation}
\label{eq:trueBljj}
B^{(0)}_\ell (j, j) =   2^{-j + \ell} ( 2^{\ell} + 2^{\ell-1} + 2^{1-\ell} + 2^{-\ell} )/3
= 2^{-j} ( 2^{2\ell-1} +1 ).
\end{equation}

Next, we examine:
\begin{equation}\nonumber
B^{(0)}_\ell(j, i) = B^{(0)}_\ell (j, j+1) = \sum_p |\Psi_{j, \ell} (p) \Psi_{j+1, \ell} (p)|,
\end{equation}
for $i=j+1$.
\begin{table}
\centering
\caption{Ranges of indices for non-zero parts of Haar cross-correlation wavelet $\Psi_{j, \ell}(p)$.
	\label{tab:Psirange}}
\begin{tabular}{c|c|c}
Range & $j$  & $i$\\\hline
I & $-2^\ell \leq p < 0$ & $-2^\ell \leq p < 0$\\
II & $2^{j-1} - 2^\ell \leq p < 2^{j-1}$ & $2^{i-1} - 2^\ell \leq p < 2^{i-1}$\\
III & $2^j - 2^\ell \leq p < 2^j$ & $2^i - 2^\ell \leq p < 2^i$.
\end{tabular}
\end{table}
Examining Table~\ref{tab:Psirange} shows that Range~I for both the $j$ and $j+1$ cross-correlation
wavelets always overlap and  Range II for $i=j+1$ overlaps with Range III for $j$. Hence:
\begin{align}
B^{(0)}_{\ell} (j,  i= j+1) &= \sum_{p \in \mbox{Range I}} |\Psi_{j, \ell} (p) \Psi_{j+1, \ell} (p)| +
	\sum_{p \in \mbox{Range 3}} |\Psi_{j, \ell} (p) \Psi_{j+1, \ell} (p)|\nonumber\\
	&= 2^{-(j-\ell)/2} 2^{-(i-\ell)/2} \left\{ \sum_{p= -2^\ell}^{-2^{\ell-1} - 1} (2^{-\ell} p +
1)^2
		+ \sum_{p = -2^{\ell-1}}^{-1} (2^{-\ell}p)^2 \right. \nonumber\\
	&+ 2^{-2\ell} \sum_{p=2^{j}- 2^\ell}^{2^{j} - 2^{\ell-1} - 1} |(2p - 2^i + 2^{\ell+1} ) (2^j -
p - 2^\ell)|\nonumber\\
	&+ \left. 2^{-2\ell} \sum_{p=2^{j} - 2^{\ell-1}}^{2^{j}-1} |(2^i - 2p)(p-2^j)| \right\}
\nonumber\\
	&= 2^\ell 2^{-j/2} 2^{-(j+1)/2} \times 2^{-2-\ell} (2^{2\ell}+2)\nonumber\\
	&=  2^{-j} 2^{-3/2}   (2^{2\ell-1} +1).
	\end{align}
Finally for $|i-j| > 1$ we examine:
\begin{align}
B^{(0)}_\ell (j, i) &= \sum_p | \Psi_{j, \ell} (p) \Psi_{i, \ell} (p)|\nonumber\\
&=  2^{-(j-\ell)/2} 2^{-(i-\ell)/2}
	\left\{ \sum_{p = -2^\ell}^{-2^{\ell-1} -1} (2^{-\ell} p + 1)^2 + \sum_{p=-2^{\ell-1}}^{-1}
(2^{-\ell}p)^2 \right\}\nonumber\\
	&= 2^{-j/2} 2^{-i/2} (2^{2\ell-1} + 1)/6.
\end{align}

\underline{Part B:} for the case $i, j < \ell$. First, for $j=i$ we have
\begin{equation}\nonumber
B_\ell^{(0)} (j, j) = \sum_p \Psi^2_{j, \ell} (p)= \sum_p \Psi^2_{\ell, j} (-p)
= \sum_p \Psi^2_{\ell, j} (p) = B^{(0)}_{L} (J,J),
\end{equation}
where $L=j$ and $J= \ell$ and we
used $\Psi_{j, \ell} (p) = \Psi_{\ell, j} (-p)$ since we will use the formula for $\Psi_{\ell, j}$
where $\ell > j$ in~\eqref{eq:HaarXCorr}. Since $L = j < \ell = J$ this puts
us into the situation of~\eqref{eq:trueBljj} which gives:
\begin{equation}\nonumber
B^{(0)}_\ell (j, j) = 2^{-J} ( 2^{2L - 1} + 1) = 2^{-\ell} (2^{2j-1} + 1),
\end{equation}
as required. For $i < j < \ell$ we use the formula for $\Psi$ for $j < \ell$ given
by~\eqref{eq:Psijlessell}. Due to the form of $\Psi_{j, \ell}(p)$ for $i < j < \ell$
we can split the sum into three parts corresponding to the the three non-zero
parts of the autocorrelation wavelet given in~\eqref{eq:Psijlessell}. The condition
$i < j < \ell$ is helpful as the interval associated with $i$ nests within that
of $j$, and the $j$ interval nests within that associated with $\ell$. First,
we will deal with the last two terms of~\eqref{eq:Psijlessell} which do not depend on
$\ell$ (``front part'').
\begin{align}
\mbox{front} &= \sum_{p=1}^{2^i} | \Psi_{j, \ell} (p) || \Psi_{i, \ell} (p) |\nonumber\\
&= \sum_{p=1}^{2^{i-1}} | - 2^j p | | -2^i p| + \sum_{p=2^{i-1}+1}^{2^i}
	| -2^{-j} p | | 1 - 2^{-i} p |
\end{align}
Note how the first term in each of these sums is $|-2^{-j} p|$ because
this is the formula for $\Psi_{j, \ell} (p)$ over the interval $[1, 2^i]$ because
this interval is always contained within $[1, 2^{j-1}]$ since $i < j$.
We also ensure that the last absolute value term in
the sum is positive (if it was negative then we'd switch signs of the contents
of the absolute value as $|x| = -x$ if $x$ is negative).
Continuing
\begin{equation}\nonumber
\mbox{front} = 2^{-i-j} \sum_{p=1}^{2^{i-1}} p^2 +
	2^{-j} \sum_{p=2^{i-1}+1}^{2^i} p (1 - 2^{-i}p)
	= 2^{-j + 2i - 3}.
\end{equation}
The middle term is constructed in a similar way except that $\ell$ enters
into the equation. However, the concept that the $j$ interval can only
ever overlap the $i$ interval on its first half still remains. Hence,
\begin{align}
\mbox{middle } &= \sum_{p = -2^{\ell - 1}+1}^{-2^{\ell-1} + 2^i}  | \Psi_{j, \ell} (p) || \Psi_{i,
\ell} (p) |\nonumber\\
&= \sum_{p=-2^{\ell-1}+1}^{-2^{\ell-1} + 2^{i-1}} 2^{-j} (2^\ell + 2p) 2^{-i} (2\ell +
2p)\nonumber\\
	&+
	\sum_{p=-2^{\ell-1} + 2^{i-1} +1}^{-2^{\ell-1} + 2^i} 2^{-j} (2^\ell + 2p)
		2^{-i} (2^{i+1} - 2^\ell - 2p)\nonumber\\
&= 2^{-j-i} \left\{ \sum_{p=-2^{\ell-1}+1}^{-2^{\ell-1} + 2^{i-1}} (2^\ell + 2p)^2\right.
\nonumber\\
	&+\left.  \sum_{p=-2^{\ell-1} + 2^{i-1} +1}^{-2^{\ell-1} + 2^i} (2^\ell + 2p)
		(2^{i+1} - 2^\ell - 2p) \right\}\nonumber\\
&= 2^{-j} 2^{2i-1}.
\end{align}
The back part uses precisely the same part as the middle
\begin{align}
\mbox{back} &= \sum_{p = -2^\ell + 1}^{-2^\ell + 2^{i-1}}
	| -2^{-j} (p + 2^\ell) | | -2^{-i} (p + 2^\ell) |\nonumber\\
	&+ \sum_{p=-2^\ell + 2^{i-1} +1}^{-2^\ell + 2^i}
		| -2^{-j} (p + 2^\ell) | | 2^{-i} (2^\ell + p - 2^i) |\nonumber\\
&= 2^{-j-i} \left\{ \sum_{p = -2^\ell + 1}^{-2^\ell + 2^{i-1}} (p + 2^\ell)^2 +
	\sum_{p=-2^\ell + 2^{i-1} +1}^{-2^\ell + 2^i}
		(p + 2^\ell)(2^i - p - 2^\ell) \right\}\nonumber\\
	&= 2^{-2} 2^{-j} 2^{2i-1}.
\end{align}
Adding the front, middle and back components together and multiplying through
by the constant that appears at the front of~\eqref{eq:Psijlessell} gives:
\begin{align}
B^{(0)}_{\ell}(j, i) &= 2^{-(\ell-j)/2} 2^{-(\ell - i)/2} ( 2^{-j} 2^{-2} 2^{2i-1}
	+ 2^{-j} 2^{2i-1} + 2^{-2} 2^{-j} 2^{2i-1})\nonumber\\
	&= 2^{-\ell} 2^{i/2} 2^{j/2}\times 2^{-j} 2^{2i-1} (2^{-2} + 1 + 2^{-2})\nonumber\\
	&= \frac{3}{2} 2^{-\ell} 2^{-j/2} 2^{5i/2 - 1},
\end{align}
as required.

\underline{Part C:} for the case $i < \ell < j$. When considering the formula for
$B^{(0)}_{\ell}(j,i)$
we use formula~\eqref{eq:HaarXCorr} for $\Psi_{j, \ell}(p)$ because $\ell < j$ and
formula~\eqref{eq:Psijlessell} for $\Psi_{i, \ell}(p)$ because $i < \ell$.

Let us first consider
positive $p$ first. In this case, the only values of $\Psi_{i, \ell}(p)$ which
are nonzero are for $0 < p \leq 2^i$ and the only values of $\Psi_{j, \ell}(p)$
are for $2^{j-1} - 2^\ell \leq p$. So, if $2^i < 2^{j-1} - 2^\ell$ then there is no overlap
between these two cross-correlation wavelets, under what conditions can this
occur (subject to $i < \ell < j$)?
Suppose
$j = \ell + 1$ then $2^{j-1} - 2^\ell = 0$ and $2^i$ is never less than or equal to zero.
Suppose $j=\ell + 2$, then $2^{j-1} - 2^\ell = 2^\ell$ which is always greater than
$2^i$ because we know that $\ell > i$. Hence, we have to consider two cases
(i) $j = \ell + 1$ where the positive parts of the cross-correlation functions
overlap and $j > \ell + 1$ where they do not.

We will now work out this `overlap' contribution where $j = \ell+1$:
\begin{equation}\nonumber
\mbox{`overlap'} = \sum_{p=1}^{2^i} | \Psi_{\ell+1, \ell} (p) ||\Psi_{i, \ell}(p)|.
\end{equation}
The limits of the sum correspond to where the term $\Psi_{i, \ell} (p)$ is
non-zero for $p \geq 0$. Now we consider which parts of
$\Psi_{\ell+1, \ell}(p)$ are relevant to the sum. With $j= \ell+1$ the fifth and
sixth ranges in~\eqref{eq:HaarXCorr} turn into
\begin{gather}
0 \leq \tau < 2^{\ell-1},\label{eq:range1}\\
2^{\ell-1} \leq \tau < 2^\ell, \label{eq:range2}
\end{gather}
respectively. Since $i < \ell$ we have $2^i < 2^\ell$ and so the two ranges
in~\eqref{eq:range1} and \eqref{eq:range2} are the only ones we need to consider
for $\Psi_{\ell+1, \ell} (p)$ for positive $p$. Now the highest that $i$ can be is
$i = \ell - 1$ so that maximum $p$ is $2^i = 2^{\ell-1}$ so, in actuality, it is only
range~\eqref{eq:range1} that is active for any $p > 0$.

Hence, without the constant $2^{-(j-\ell)/2} 2^{-(\ell-i)/2} = 2^{-j/2}2^{i/2}$ at the front
of~\eqref{eq:HaarXCorr} and~\eqref{eq:Psijlessell}, we can write
\begin{align}
\mbox{`overlap'} &= \sum_{p=1}^{2^i} | 2^{-\ell} (2p - 2^{\ell+1} + 2^{\ell+1}) |
	| \Psi_{i, \ell} (p)|\nonumber\\
	&= 2^{-\ell+1} \sum_{p=1}^{2^i} p | \Psi_{i, \ell}(p)|\nonumber\\
	&= 2^{-\ell+1} \left\{ \sum_{p=1}^{2^{i-1}} p | -2^{-i} p | +
		\sum_{p=2^{i-1}+1}^{2^i} p | 1 - 2^{-i} p | \right\}.
\end{align}
Note that $1 - 2^{-i} p$ is positive on the range of $p$ in the second sum. Hence,
\begin{align}
\mbox{`overlap'} &= 2^{-\ell + 1} \left\{ 2^{-i} \sum_{p=1}^{2^{i-1}} p^2 +
	\sum_{p=2^{i-1}+1}^{2^i} p (1 - 2^{-i} p ) \right\}\nonumber\\
	&= 2^{-\ell} 2^{2i} 2^{-2}, \label{eq:overlap}
\end{align}
for $i < \ell$ and $j = \ell+1$.

For $p < 0$, $i < \ell < j$ the cross-correlation wavelet
$\Psi_{j, \ell} (p)$ splits into two parts random each of those two parts
the cross-correlation wavelet $\Psi_{i, \ell}(p)$ splits into two further parts. Hence,
the sum on the $p < 0$ terms (the `back' bit) is given by:
\begin{align}
\mbox{`Back'} &= \sum_{p= -2^\ell +1}^{-2^\ell + 2^i} | \Psi_{j, \ell} (p) || \Psi_{i, \ell}(p)|
+ \sum_{p=-2^{\ell-1} + 1}^{-2^{\ell -1} + 2^i} | \Psi_{j, \ell} (p) || \Psi_{i,
\ell}(p)|\nonumber\\
&= \sum_{p= -2^\ell +1}^{-2^\ell + 2^i} | -(2^{-\ell} p + 1 )| |\Psi_{i, \ell}(p)|
+ \sum_{p=-2^{\ell-1} + 1}^{-2^{\ell -1} + 2^i}  | 2^{-\ell} p | | \Psi_{i, \ell}(p)|
\end{align}
We can now split each of these two sums into the two sets of two ranges
dictated by the domain of $\Psi_{i, \ell} (p)|$ as follows:
\begin{align}
\mbox{`Back'} &= \sum_{p = -2^\ell + 1}^{-2^\ell+ 2^{i-1}} | 1 + 2^{-\ell} p |
	| 2^{-i} (p + 2^\ell)|\nonumber\\
	&+ \sum_{p=-2^\ell + 2^{i-1} + 1}^{-2^\ell + 2^i} | 1 + 2^{-\ell} p |
	| 2^{-i} (2^\ell + p - 2^i) |\nonumber\\
	&+ \sum_{p=-2^{\ell-1} + 1}^{-2^{\ell-1} + 2^{i-1}} | 2^{-\ell} p |
	| 2^{-i} ( 2^\ell + 2p)|\nonumber\\
	&+ \sum_{p=-2^{\ell-1} + 2^{i-1} +1}^{-2^{\ell-1} + 2^i} | 2^{-\ell} p |
	| 2^{-i} (2^{i+1} - 2^\ell - 2p)|\label{eq:Backbit}
\end{align}
For all of the eight components in the four sums in~\eqref{eq:Backbit}
all except the fourth, fifth and seventh are always positive over their respective
sum's range of $p$ values. Hence, we replace those terms in the absolute
values by their negative (e.g.\ $|x| = -x$  for $x<0$) and obtain:
\begin{align}
\mbox{`Back'} &=  2^{-i} \sum_{p = -2^\ell + 1}^{-2^\ell+ 2^{i-1}} ( 1 + 2^{-\ell} p )
	 (p + 2^\ell)\nonumber\\
	&+ 2^{-i} \sum_{p=-2^\ell + 2^{i-1} + 1}^{-2^\ell + 2^i} ( 1 + 2^{-\ell} p )
	(2^i - p - 2^\ell) \nonumber\\
	&+2^{-\ell -i } \sum_{p=-2^{\ell-1} + 1}^{-2^{\ell-1} + 2^{i-1}} (-p)
	( 2^\ell + 2p)\nonumber\\
	&+2^{-\ell -i} \sum_{p=-2^{\ell-1} + 2^{i-1} +1}^{-2^{\ell-1} + 2^i} (-p)
	(2^{i+1} - 2^\ell - 2p)\nonumber\\
	&= 2^{i-3} (2 - 2^{i-\ell}).\label{eq:Backbit2}
\end{align}
Now multiplying by the $2^{-j/2} 2^{i/2}$ which we omitted earlier gives:
\begin{align}
B^{(0)}_{\ell} (j,i) &= 2^{-j/2} 2^{i/2} 2^{i-3} ( 2 - 2^{i-\ell})\nonumber\\
&= 2^{-j/2} 2^{3i/2-3} (2 - 2^{i-\ell}),
\end{align}
as required for the second equation in~\eqref{eq:sandwichB} for
$i < \ell$ and $\ell + 1 < j$.

For the first equation in~\eqref{eq:sandwichB} we have to add $2^{-j/2} 2^{i/2}$
times equation~\eqref{eq:overlap} to obtain:
\begin{align}
B^{(0)}_\ell (j, i) &= 2^{-j/2} 2^{3i/2-3} (2 - 2^{i-\ell}) + 2^{-j/2} 2^{i/2} 2^{-\ell}
	2^{2i} 2^{-2}\nonumber\\
	&= \frac{1}{8} 2^{-(\ell+1)/2} 2^{3i/2} (2^{i-\ell} + 2),
\end{align}
as required.

\underline{Part D:} First consider $j = \ell < i$. Then:
\begin{equation}\nonumber
B^{(0)}_\ell (j, i) = \sum_p | \Psi_{j, \ell} (p) || \Psi_{i, \ell} (p)|
= \sum_p | \Psi_\ell (p) || \Psi_{i, \ell} (p)|,
\end{equation}
where $\Psi_{\ell} (p)$ is the ordinary autocorrelation wavelet from
\cite{nason00:wavelet}.
The domain of $\Psi_{\ell} (p)$ is from $-2^\ell < p 2^\ell$. The range of $\Psi_{\ell} (p)$ and
$\Psi_{i, \ell}(p)$
agrees for $p <0 $ but the two wavelets only overlap for $p>0$ if the lower end of the nonzero range
of
$\Psi_{i, \ell}(p)$, namely $2^{i-1} - 2^\ell$ is smaller than the upper end of the autocorrelation
wavelet
$2^{\ell}$, recalling that $i > \ell$. This only occurs when $i=\ell + 1$ and for $i  \geq \ell + 2$
there is
no overlap for $p > 0$.

For this latter case let use look at the negative range of $p$ values:
\begin{align}
B^{(0)}_{\ell} (\ell, i) &= \sum_{p=-2^{\ell}+1}^{-2^{\ell - 1}} | \Psi_\ell (p) || \Psi_{i, \ell} (p)| +
	\sum_{p= - 2^{\ell - 1}+1}^0 | \Psi_\ell (p) || \Psi_{i, \ell} (p)|\nonumber\\
	&= 2^{-(i-\ell)/2} \left\{  \sum_{p=-2^{\ell}+1}^{-2^{\ell - 1}} | 2^{-\ell} |p| - 1 | |
-(2^{-\ell} p + 1 )|
		\right. \nonumber\\
	&+ \left.\sum_{p= - 2^{\ell - 1}+1}^0   | 1 - 3|p| 2^{-\ell} ||2^{-\ell} p |
\right\}\nonumber\\
	&= 2^{-(i-\ell)/2} \left\{  \sum_{p=-2^{\ell}+1}^{-2^{\ell - 1}} | -2^{-\ell} p - 1 | |
2^{-\ell} p + 1 |
		\right. \nonumber\\
		&+ \left.\sum_{p= - 2^{\ell - 1}+1}^0 |   1 + 3p 2^{-\ell} | (- 2^{-\ell} p
)\right\}\nonumber\\
	&= 2^{-(i-\ell)/2} \left\{  \sum_{p=-2^{\ell}+1}^{-2^{\ell - 1}} ( 2^{-\ell} p + 1 )^2\right.
\nonumber\\
	&+ \left. \sum_{p= - 2^{\ell - 1}+1}^0 |   1 + 3p 2^{-\ell} | (- 2^{-\ell} p
)\right\}\nonumber\\
	&= 2^{-(i-\ell)/2} \{ C2 + C1 \}.
\end{align}
The sum $C2 = \sum_{p=-2^{\ell}+1}^{-2^{\ell - 1}} ( 2^{-\ell} p + 1 )^2 = \tfrac{1}{3} 2^{-3-\ell}
( 2+ 3\cdot 2^\ell + 2^{2\ell})$.
The sum $C1$ can be shown to be approximated by $(2^{3+\ell} - 9\cdot 2^{1-\ell} - 27)/216$ in
Mathematica
with error bounded by
\begin{equation}\nonumber
2 | 1 + 2\cdot 2^{-\ell} (-2^\ell/3 - 1) || -2^{\ell} (-2^{\ell}/3 - 1)| \leq 5\cdot 2^{-\ell}.
\end{equation}
Hence, for $i \geq \ell+2$ we have
\begin{equation}
B^{(0)}_\ell (\ell, i) = \tfrac{17}{27} 2^{\ell - 3}. \label{eq:igtlpo}
\end{equation}

Now, for $i = \ell + 1$ let us examine the contribution for $p > 0$:
\begin{align}
\mbox{`Front'} &= \sum_{p=1}^{2\ell} | \Psi_\ell (p) || \Psi_{\ell+1, \ell} (p)|\nonumber\\
&= \sum_{p=1}^{2^{\ell-1}} | \Psi_\ell (p) || \Psi_{\ell+1, \ell} (p)|
	+
		\sum_{p=2^{\ell-1}+1}^{2^\ell} | \Psi_\ell (p) || \Psi_{\ell+1, \ell} (p)|\nonumber\\
&= 2^{-1/2} \left\{ \sum_{p=1}^{2^{\ell-1}} | 2 - 3|p| 2^{-\ell} | | 2^{-\ell} (2p - 2^{\ell+1} +
2^{\ell+1}) | \right. \nonumber\\
&+\left. \sum_{p=2^{\ell-1}+1}^{2^\ell} | 2^{-\ell} |p|  - 1 | | 2^{-\ell} (2^{\ell+1} - 2p)|
\right\}\nonumber\\
&= 2^{-1/2} \left\{ 2^{-\ell+1} \sum_{p=1}^{2^{\ell-1}}  | 1 - 3 p 2^{-\ell} | p +
	 2^{-\ell} \sum_{p=2^{\ell-1}+1}^{2^\ell} | 2^{-\ell} p - 1 | | 2^{\ell+1} - 2p |
\right\}\nonumber\\
&= (27 - 9\cdot 2^{1-\ell} + 2^{\ell+3} )/108  + 2^{-2-\ell} (2^{2\ell} - 3\cdot 2^{\ell} +
2)/3\nonumber\\
&= \frac{17}{27} 2^{\ell-2}.\label{eq:ieqlpo}
\end{align}
Hence, adding together~\eqref{eq:igtlpo} and~\eqref{eq:ieqlpo}  gives:
\begin{equation}
B^{(0)}_{\ell} (\ell, i) = \frac{17}{9} 2^{\ell-3},
\end{equation}
for $i = \ell+1$ as required.

For the case of $i < \ell$ we use the following bound:
\begin{equation}\nonumber
B^{(0)}_\ell (\ell, i) = \sum_{p} | \Psi_{\ell} (p) | | \Psi_{i, \ell} (p)| = \sum_{p=-2^{\ell} +1}^{2^i}
| \Psi_{\ell} (p) | | \Psi_{i, \ell} (p)|
 \leq  \sum_{p=-2^{\ell} +1}^{2^i}   | \Psi_{i, \ell} (p)|,
\end{equation}
as $|\Psi_\ell (p)|$ takes its maximum value of 1 at $p=0$.
We can work out this sum directly taking care to discover when the argument inside the absolute
value sign is
negative or positive giving:
\begin{align}
2^{-(i-\ell)/2} B^{(0)}_\ell (\ell, i) & \leq 2^{-i}  \sum_{p=-2^\ell +1}^{-2^\ell + 2^{i-1}} (p + 2^\ell)
	+ 2^{-i} \sum_{p=-2^\ell +2^{i-1} + 1}^{-2^\ell + 2^i} (2^i - 2^\ell - p) \nonumber\\
	&+ 2^{-i} \sum_{p=-2^{\ell-1}+1}^{-2^{\ell-1} + 2^{i-1}} (2^\ell + 2p) +
	2^{-i} \sum_{p=-2^{\ell-1}+2^{i-1}+1}^{-2^{\ell-1} +2^{i}} (2^{i+1} -2^\ell -2p) \nonumber\\
	&+ 2^{-i} \sum_{p=1}^{2^{i-1}} p + \sum_{p=2^{i-1}+1}^{2^i} (1 - 2^{-i} p ),
\end{align}
Hence
\begin{equation}
B^{(0)}_\ell (\ell, i) \leq  2^{3i/2} 2^{-\ell/2}.
	\end{equation}
as required.
\end{proof}

\end{document}